\numberwithin{equation}{section}
\DeclareMathSymbol{\C}{\mathalpha}{AMSb}{"43}
\newtheorem{Thm}{Theorem}[section]
\newtheorem{Lem}[Thm]{Lemma}
\newtheorem{Rem}[Thm]{Remark}
\newcommand{\e}\varepsilon
\newcommand{\vs}{\vspace}
\newcommand{\bsub}{\begin{subequations}}
\newcommand{\esub}{\end{subequations}$\!$}
\begin{document}

\title{On the existence of positive solution for a Neumann problem with double critical exponents in half-space
}

\author{Yinbin Deng
\thanks {Contributing author. School of Mathematics and Statistics \& Hubei Key Laboratory of Mathematical Sciences, Central China Normal University, Wuhan 430079, China.  Email: \texttt{ybdeng@ccnu.edu.cn}.}
\ \  Longge Shi
\thanks{Corresponding author. School of Mathematics and Statistics \& Hubei Key Laboratory of Mathematical Sciences, Central China Normal University, Wuhan 430079, China.
Email: \texttt{shilongge@mails.ccnu.edu.cn}.}}

\date{}

\smallbreak \maketitle

\begin{abstract}
In this paper, we consider the existence and nonexistence of positive solution for the
 following critical Neumann problem
\begin{equation}\label{01as}
	\left\{
	\begin{aligned}
		-\Delta {u}-\frac{1}{2}(x \cdot{\nabla u})&= \lambda u+a{|u|^{{2}^{*}-2}u}& \ \ \mbox{in} \ \ \ {{\mathbb{R}}^{N}_{+}}, \\
		 \frac{{\partial u}}{{\partial n}}&={ {\mu|u|^{q-2}u}}+|u|^{{2}_{*}-2}u \ & \mbox{on}\ {{\partial {{\mathbb{R}}^{N}_{+}}}},
	\end{aligned}
	\right.
\end{equation}
where $ \mathbb{R}^{N}_{+}=\{(x{'}, x_{N}): x{'}\in {\mathbb{R}}^{N-1}, x_{N}>0\}$ is the upper half-space, $N\geq3$, $\lambda, \mu\in\mathbb{R}$ are parameters, $a\in\{0, 1\}$,  $n$ is the outward normal vector at the boundary ${{\partial {{\mathbb{R}}^{N}_{+}}}}$, $2\leq q <{2}_{*}$, $2^{*}=\frac{2N}{N-2}$ is the usual critical exponent for the Sobolev embedding $D^{1,2}({\mathbb{R}}^{N}_{+})\hookrightarrow {L^{{2}^{*}}}({\mathbb{R}}^{N}_{+})$ and ${2}_{*}=\frac{2(N-1)}{N-2}$ is the critical exponent for the Sobolev trace embedding  $D^{1,2}({\mathbb{R}}^{N}_{+})\hookrightarrow {L^{{2}_{*}}}(\partial \mathbb{R}^{N}_{+})$.
By applying the Mountain Pass Theorem without (PS) condition and the delicate estimates for Mountain Pass level, we obtain the existence of a positive solution under different assumptions on $\lambda$,  ${\mu}$ and  $q$. Meanwhile, some nonexistence results for problem  (\ref {01as}) is also obtained by an improved Pohozaev identity and Hardy inequality
 according to the value of the parameters $ \lambda$, ${\mu}$ and $q$. Particularly, for $N\geq3$, $\lambda \ge 0$, $\mu>0$ and $q\in(2, 2^*)$, we obtain that problem \eqref{01as} has a positive solution if and only if  $\lambda\in[0, \frac N2)$; On the other hand,
 for $N\ge 3$ and $\mu =0$, we find a lower bound $\Lambda^*\in [\frac N4, \frac N2)$ depending on $N$ such that problem (\ref {01as}) has a positive solution if $\lambda \in (\Lambda^*, \frac N2)$  and has no positive solution if $\lambda \in (-\infty, \frac N4)\cup [\frac N2, +\infty)$. Moreover, we estimate  that $\Lambda ^* \in (\frac N4, \frac {N-2}2)$ if $N\ge 5$  and $\Lambda ^* =\frac N4 =1$ if $N=4$ which gives the best lower bound of $\lambda$ for the existence of a positive solution of problem (\ref {01as}) if $N=4$.
\end{abstract}

\vskip 0.12truein

\noindent {\it Keywords:} Self-similar solutions; Half-space; Neumann problem; Critical exponents.

\vskip 0.02truein

\noindent{\textbf{2020 AMS Subject Classification:}} 35B09 35B33 35J66.

\vskip 0.2truein

\section{Introduction}

In this paper, we  are concerned with the existence and nonexistence of positive solution for the
 following  Neumann problem
\begin{equation}\label{1.1}
	\left\{
	\begin{aligned}
		-\Delta {u}-\frac{1}{2}(x \cdot{\nabla u})&= \lambda u+a{|u|^{{2}^{*}-2}u}& \ \ \mbox{in} \ \ \ {{\mathbb{R}}^{N}_{+}}, \\
		 \frac{{\partial u}}{{\partial n}}&={ {\mu|u|^{q-2}u}}+|u|^{{2}_{*}-2}u \ & \mbox{on}\ {{\partial {{\mathbb{R}}^{N}_{+}}}}
	\end{aligned}
	\right.
\end{equation}
with the critical exponents,
where $ \mathbb{R}^{N}_{+}=\{(x{'}, x_{N}): x{'}\in {\mathbb{R}}^{N-1}, x_{N}>0\}$ is the upper half-space, $N\geq3$,  $2^{*}=\frac{2N}{N-2}$ is the usual critical exponent for the Sobolev embedding $D^{1,2}({\mathbb{R}}^{N}_{+})\hookrightarrow {L^{{2}^{*}}}({\mathbb{R}}^{N}_{+})$ and ${2}_{*}=\frac{2(N-1)}{N-2}$ is the critical exponent for the Sobolev trace embedding  $D^{1,2}({\mathbb{R}}^{N}_{+})\hookrightarrow {L^{{2}_{*}}}(\partial \mathbb{R}^{N}_{+})$, $\lambda, \mu \in\mathbb{R}$ are parameters, $a\in\{0, 1\}$,  $n$ is the outward normal vector at the boundary ${{\partial {{\mathbb{R}}^{N}_{+}}}}$, $2\leq q <{2}_{*}$.

The problem (\ref {1.1}) relates to the self-similar solutions for the heat equation
\begin{equation}\label{1.3}
		v_{t}-\Delta {v}=f(v) \ \ \mbox{in} \ \ \ {{\mathbb{R}}^{N}}{\times(0,\infty)},
\end{equation}
where the nonlinearity $f$ is a power function. It is know that equation \eqref{1.3} is invariant under the similarity transformation
\begin{equation*}
v(x,t)\mapsto v_\lambda(x,t)=\lambda^{\frac{2}{p-2}}v(\lambda x,\lambda^{2}t) \quad\mbox{for any}~\lambda>0,
\end{equation*}
provided that $f(v)=|v|^{p-2}v$ and $p>2$.
A solution $v$ is said to be forward self-similar of equation \eqref{1.3} if and only if $v$ has the special form
\begin{equation*}
v(x,t)=t^{-\frac{1}{p-2}}u(\frac{x}{\sqrt{t}}),
\end{equation*}
where
$u$ verifies
\begin{equation*}\label{002}
		-\Delta {u}-\frac{1}{2}(x \cdot{\nabla u})-\frac{1}{p-2} u={|u|^{p-2}u}\ \ \mbox{in} \ \ \ {{\mathbb{R}}^{N}}.
\end{equation*}
Such self-similar solutions have attracted widespread attention since they preserve the PDE scaling and carry simultaneously information about small and large scale behaviors (see \cite{17,37,29}).

There are numerous studies on self-similar solutions to problem (\ref{1.3}).
In \cite{21}, Haruax and Weissler investigated equation \eqref{1.3} with  $f(v)=|v|^{p-2}v$ and $2+2/N<p<2^*$. They obtained the existence of forward self-similar solutions by ODE technique. Still using ODE techniques, Brezis et al. \cite{9} constructed forward self-similar solutions to equation \eqref{1.3} with $f(v)=-|v|^{p-2}v$ and $2<p<2+2/N$. This result was later covered by M. Escobedo and O. Kavian (see \cite{10}). They were the first authors to propose variational approach to nonlinear heat problems.
By introducing a weighted Sobolev space, they established the existence of self-similar solutions to equation \eqref{1.3} with the same nonlinearity $f$ as \cite{21}.
When $f=\alpha \cdot\nabla ( |v|^{q-2}v)- |v|^{p-2}v$, $2<p<2+2/N, q=(p+2)/2$ and $\alpha \in\mathbb{R}^N$, thanks to the variational structure, Escobedo and Zuazua \cite{111} obtained self-similar solutions of equation \eqref{1.3} via a fixed point Theorem. Natio \cite{28} studied equation \eqref{1.3} with singular initial data $v(x,0)=l|x|^{-{2}/{(p-2)}}$ provided that  $f(v)=|v|^{p-2}v, p>2+2/N$ and $l>0$. They established the existence and multiplicity of positive self-similar solutions by ODE technique.
 For more results about the existence of solutions for equation \eqref{1.3} with different boundary conditions  on the bounded domains,  upper half-space and even in the whole space, we refer the readers to \cite{4, 5, 23, 33, 35, 12,19, 26,27,36} and references therein.

The mathematical literature concerning with the self-similar solutions to the  heat equations with Neumann boundary conditions in half space does not seem to be very extensive. Ferreira et al. \cite{2015} considered
 \begin{equation}\label{1.2}
	\left\{
	\begin{aligned}
		v_{t}-\Delta {v}&=\theta{|v|^{p-2}v}& \ \ \mbox{in} \ \ \ {{\mathbb{R}}^{N}_{+}}{\times(0,\infty)},\\
		\frac{{\partial v}}{{\partial n}}&={|v|^{q-2}v} \ & \mbox{on} \ {{\partial {{\mathbb{R}}^{N}_{+}}}}{\times(0,\infty)},
	\end{aligned}
	\right.
\end{equation}
where  $N\geq3$, $\theta\in\{0, 1\}, 2<p< 2^*$, $2<q<2_*$ and $q=(p+2)/2$.
If $\theta=0$, they showed that equation \eqref{1.2} has a positive  self-similar solution for $2+1/N<q<2_*$. If $\theta=1$, they obtained infinitely many self-similar solutions for equation \eqref{1.2} provided $1/(p-2)\in\mathbb{R}\setminus\Sigma$, where $\Sigma$ is a countable set associated with the eigenvalues of the linear problem. Later on, Ferreira et al. \cite{2021} established a nonexistence result of self-similar solutions for equation \eqref{1.2} when $N\geq3$, $ \theta=1, p=2^*$ and $q=2_*$.

Note that from the point of view of elliptic PDEs, the study of equation \eqref{1.2} has an interest of its own. In fact, the following elliptic equation with Neumann boundary condition
\begin{equation}\label{1.0}
	\left\{
	\begin{aligned}
		-\Delta {u}-\frac{1}{2}(x \cdot{\nabla u})&= \lambda u+ a{|u|^{p-2}u}& \ \ \mbox{in} \ \ \ {{\mathbb{R}}^{N}_{+}}, \\
		 \frac{{\partial u}}{{\partial n}}&={ {|u|^{q-2}u}} \ & \mbox{on}\ {{\partial {{\mathbb{R}}^{N}_{+}}}},
	\end{aligned}
	\right.
\end{equation}
naturally appears when one tries to find self-similar solutions for equation \eqref{1.2}.
In \cite{2015}, Ferreira et al. considered equation \eqref{1.0} provided that $N\geq3, \lambda\in \mathbb{R}, a\in\{0,1\}, 2<p<2^*, 2<q<2_* $. They showed the existence and multiplicity of solutions according to the range of $\lambda$ and $a$. Equation \eqref{1.0} with critical growth was recently
studied in \cite{2021}.  They established the existence of a positive solution for the Neumann problem
\begin{equation}\label{1.012}
	\left\{
	\begin{aligned}
		-\Delta {u}-\frac{1}{2}(x \cdot{\nabla u})&= \lambda u+ {|u|^{2^*-2}u}& \ \ \mbox{in} \ \ \ {{\mathbb{R}}^{N}_{+}}, \\
		 \frac{{\partial u}}{{\partial n}}&=|u|^{{2}_{*}-2}u \ & \mbox{on}\ {{\partial {{\mathbb{R}}^{N}_{+}}}},
	\end{aligned}
	\right.
\end{equation}
for $N\geq 3 $ if $\lambda\in(\frac{N}{2}-\delta,  \frac {N}{2})$, where $\delta>0$ is a small constant. They also obtained the existence of a positive solution for the Neumann problem
\begin{equation}\label{1.011}
	\left\{
	\begin{aligned}
		-\Delta {u}-\frac{1}{2}(x \cdot{\nabla u})&= \lambda u  & \ \ \mbox{in} \ \ \ {{\mathbb{R}}^{N}_{+}}, \\
		 \frac{{\partial u}}{{\partial n}}&=|u|^{{2}_{*}-2}u \ & \mbox{on}\ {{\partial {{\mathbb{R}}^{N}_{+}}}},
	\end{aligned}
	\right.
\end{equation}
 for $N\geq7$ if $\lambda\in(\frac {N}{4}+\frac {(N-4)}{8}, \frac {N}{2})$.

 Unfortunately, the existence of positive solutions for problem (\ref {1.011}) when $3\le N \le 6$ and the best lower bound of $\lambda$ for the existence of positive solutions for problem (\ref {1.012}) are remained open.

In this paper, we consider the existence of a positive solution for critical Neumann problem \eqref{1.1} with  $N\geq3, \lambda, \mu\in \mathbb{R}, a\in\{0,1\}, 2\leq q<2_*$. It is clear that problem (\ref {1.012}) and (\ref {1.011}) can be derived by taking $\mu =0$ in (\ref {1.1}). For $\mu =0$, we find a lower bound $\Lambda^*\in [\frac N4, \frac N2)$ depending on $N$ such that problem (\ref {1.1}) has a positive solution if $\lambda \in (\Lambda^*, \frac N2)$  and has no positive solution if $\lambda \in (-\infty, \frac N4)\cup [\frac N2, +\infty)$ for all $N\ge 3$. Moreover, we estimate  that $\Lambda ^* \in (\frac N4, \frac {N-2}2)$ if $N\ge 5$  and $\Lambda ^* =\frac N4$ if $N=4$ which gives the best lower bound of $\lambda$ for the existence of a positive solution of (\ref {1.1}) if $N=4$.

It should be mentioned that Furtado and da Silva \cite{116} investigated the existence
of a positive solution for equation \eqref{1.1} according to the value of $\mu >0$ and $q\in [2, 2_*)$ when $N\ge 4$, $\lambda=0$ and $a=0$. We highlight here the literatures \cite {BN1983}, \cite{2021}-\cite{116} and \cite {777}
which strongly motivate our main results.
\vskip 0.2cm

For convenience, we denote $ \mathbb{R}^{N-1}:=\partial\mathbb{R}^{N}_{+}$ and $\int_{\mathbb{R}^{N-1}}:=\int_{\partial\mathbb{R}^{N}_{+}}$.
Since the exponential-type weight $K(x):= e^{{|x|^{2}}/{4}}$
verifies $\text{div}(K(x)\nabla {u})=K(x)\big(\Delta {u}+\frac{1}{2}(x \cdot{\nabla u})\big)$ for any regular function $u$, problem (\ref{1.1}) can be rewritten as
\begin{equation}\label{1.5}
	\left\{
	\begin{split}
		-\mbox{div}(K(x)\nabla {u})&= {\lambda K(x) {u}}+a K(x){|u|^{{2}^{*}-2}u} \ \ \hspace{0.5cm} \mbox{in} \ \ {\mathbb{R}}^{N}_{+}, \ \  \\
		K(x{'},0)\frac{{\partial u}}{{\partial n}}&=\mu K(x{'},0){|u|^{q-2}u}+K(x{'},0){|u|^{{2}_{*}-2}u} \ \  \ \mbox{on} \ {\mathbb{R}}^{N-1}.
	\end{split}
	\right.
\end{equation}
Hence, it is nature to look for solutions in the space $X$ defined as the closure of ${{C_{c}^{\infty}}(\overline{{{\mathbb{R}}^{N}_{+}}})}$ with respect to the norm
\begin{equation*}
 \|{u}\|= \bigg({\int_{\mathbb{R}^{N}}}K(x)|\nabla{u}|^{2}dx \bigg)^{\frac{1}{2}},
\end{equation*}
which is induced by the inner product
$(u,v)={\int_{\mathbb{R}^{N}}}K(x)\nabla{u}\nabla{v}dx.$
It follows from \cite{004} and \cite{2021} that the embedding $X \hookrightarrow L^{r}_{K}(\mathbb{R}^{N}_{+})$ is continuous for $r\in[2, 2^{*}]$ and compact for $r\in[2, 2^{*})$, the embedding $X\hookrightarrow {L^{s}_{K}}({\mathbb{R}}^{N-1})$ is continuous for $s\in[2, 2_{*}]$ and compact for $s\in[2, 2_{*})$, where
\begin{eqnarray}
&&{\ \ \ L^{r}_{K}({\mathbb{R}}^{N}_{+})}:= \bigg\{u \in {L^{r}({\mathbb{R}}^{N}_{+})}:{\int_{\mathbb{R}^{N}_{+}}K(x)|u|^{r}dx} < {\infty}\bigg\}, \notag\\
&&{L^{s}_{K}({\mathbb{R}}^{N-1})}:= \bigg\{u \in {L^{s}({\mathbb{R}}^{N-1})}:{\int_{\mathbb{R}^{N-1}}K(x{'},0)|u|^{s}dx{'}} < {\infty}\bigg\}. \notag
\end{eqnarray}
Then the energy functional $I^a_{\lambda, \mu}: X\rightarrow \mathbb{R}$ 
given by
\begin{equation}
	I^a_{\lambda, \mu}(u):={\frac{1}{2}}{\|{u}\|}^{2}-{\frac{\lambda }{2}}\|u_+\|_{L^{{2}}_{K}({\mathbb{R}}^{N}_{+})}^{2}
-{\frac{a }{2^{*}}}\|{u_{+}}\|_{L^{{2}^{*}}_{K}({\mathbb{R}}^{N}_{+})}^{2^{*}}
-{\frac{1 }{2_{*}}}\|{u_{+}}\|_{L^{{2}_{*}}_{K}({\mathbb{R}}^{N-1})}^{2_{*}}
-{\frac{\mu }{q}}{\|{u_{+}}\|}_{{L_{K}^{q}}({\mathbb{R}}^{N-1})}^{q}\notag
\end{equation}
is well defined and belongs to $C^{1}(X, \mathbb{R})$,
where $u_{+}= \max \{0,u\}$, $u_{-}= - \min \{0,u\}$.

We recall two basic results about the linear problem associated to \eqref{1.1} (see \cite{2015} and \cite{116}). The first eigenvalue of the linear problem
\begin{equation}\label{qw1}
\left\{
	\begin{aligned}
		-\Delta {u}-\frac{1}{2}(x \cdot{\nabla u})&={\lambda} u& \ \ \mbox{in} \ \ \ {\mathbb{R}}^{N}_{+},  \ \\
		 \frac{{\partial u}}{{\partial n}}&=0& \mbox{on}\ {\mathbb{R}}^{N-1},
	\end{aligned}
	\right.
\end{equation}
is characterized by
\begin{equation}\label{1}
{\lambda}_{1}:=\inf_{u\in X\setminus \{0\}} \frac{ \|{u}\|^{2}}
{{{\|{u}\|}}_{{L_K^{2}({\mathbb{R}}^{N}_{+})}}^{2}}=\frac{N}{2}.
\end{equation}
And the first positive eigenvalue of the linear problem
\begin{equation}\label{qw2}
\left\{
	\begin{aligned}
		-\Delta {u}-\frac{1}{2}(x \cdot{\nabla u})&=0& \ \ \mbox{in} \ \ \ {\mathbb{R}}^{N}_{+},  \ \\
		 \frac{{\partial u}}{{\partial n}}&=\mu u& \mbox{on}\ {\mathbb{R}}^{N-1},
	\end{aligned}
	\right.
\end{equation}
is defined by
\begin{equation}\label{2}
{\mu}_{1}:=\inf_{u\in X\setminus \{0\}} \frac{ \|{u}\|^{2}}
{{{\|{u}\|}}_{{L_K^{2}({\mathbb{R}}^{N-1})}}^{2}}>0.
\end{equation}
The functional $I^a_{\lambda, \mu}$ is introduced in order to obtain nonnegative weak solutions of \eqref{1.5}. Indeed, if $u\in X$ is a nonzero critical point of $I^a_{\lambda, \mu}(u)$, we follow from
(\ref{1}) that $u_{-}\equiv 0$, and in turn $u
\geq0$. Therefore, to obtain a nonnegative nontrivial weak solution of \eqref{1.5}, it suffices to find a nonzero critical point of $I^a_{\lambda, \mu}$.



Our first Theorem is concerned with the nonexistence of positive solutions for problem (\ref {1.1}).
\begin{Thm}\label{Th1.0}
	Let $N\geq3$, $a\in\{0, 1\}$ and $2\leq q<2_*$. Suppose that $u\in C^2(\mathbb{R}^N_+)\cap X$ is a nonnegative solution of equation \eqref{1.1}, then $u\equiv0$, if one of the following assumptions holds:
	\vskip 0.2cm
	
	$(1)$  $\lambda<{N}/{4}$ and $\mu=0$;
\vskip 0.2cm
	
	$(2)$  $\lambda\leq{N}/{4}$ and $\mu<0$;
\vskip 0.2cm
	
	$(3)$  $\lambda\geq{N}/{2}$ and $\mu\geq0$;
	
\vskip 0.2cm
	
	$(4)$  $\lambda\geq0$ and $\mu\geq\mu_1$ if  $q=2$,
	
\vskip 0.2cm
\noindent
where $\mu_1$ is defined by \eqref{2}.
\end{Thm}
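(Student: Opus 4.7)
The plan is to use the multiplier method: test the weighted reformulation \eqref{1.5} against three different multipliers and combine the resulting integral identities. Testing against $u$ produces the standard energy identity. Testing against the Pohozaev multiplier $x\cdot\nabla u+\tfrac{N-2}{2}u$ produces a Pohozaev-type identity in which, because of the growing weight $K(x)=e^{|x|^{2}/4}$, an extra term $\tfrac{1}{4}\int K|x|^{2}|\nabla u|^{2}\,dx$ appears that is absent in the classical Pohozaev identity; this is the ``improved'' Pohozaev identity referred to in the abstract. Testing against the third multiplier $|x|^{2}u$ yields an identity that also contains $\int K|x|^{2}|\nabla u|^{2}\,dx$. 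The key algebraic step is to subtract the improved Pohozaev identity from this third identity: the $\int K|x|^{2}|\nabla u|^{2}\,dx$ term cancels and one is left with a clean scalar identity of the shape
\begin{equation*}
\tfrac{1}{2}\!\int_{\mathbb{R}^{N}_{+}}\!K|x|^{2}u^{2}\,dx+(N-4\lambda)\|u\|_{L^{2}_{K}(\mathbb{R}^{N}_{+})}^{2}+\tfrac{2a}{N}\!\int_{\mathbb{R}^{N}_{+}}\!K|x|^{2}u^{2^{*}}\,dx+\tfrac{1}{N-1}\!\int_{\mathbb{R}^{N-1}}\!K|x'|^{2}u^{2_{*}}\,dx'+\mu T_{q}(u)=0,\qquad(\dagger)
\end{equation*}
where $T_{q}(u):=\tfrac{q-2}{q}\int_{\mathbb{R}^{N-1}}K|x'|^{2}u^{q}\,dx'+4\bigl(\tfrac{N-2}{2}-\tfrac{N-1}{q}\bigr)\|u\|_{L^{q}_{K}(\mathbb{R}^{N-1})}^{q}$. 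A convenient feature throughout the Pohozaev calculation is that $x\cdot n=0$ on the flat boundary $\{x_{N}=0\}$, which annihilates several boundary gradient contributions that would otherwise spoil the signs.

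Cases $(1)$ and $(2)$ will then follow by reading off signs in $(\dagger)$. In case $(1)$ ($\mu=0$, $\lambda<N/4$), every term on the left is nonnegative and $(N-4\lambda)>0$, so the identity forces $\|u\|_{L^{2}_{K}(\mathbb{R}^{N}_{+})}=0$ and hence $u\equiv 0$. In case $(2)$ at $q=2$, the first piece of $T_{q}$ vanishes and the second becomes $-2\|u\|_{L^{2}_{K}(\mathbb{R}^{N-1})}^{2}$, so $\mu T_{2}(u)\geq 0$ whenever $\mu<0$; every summand of $(\dagger)$ is again nonnegative, and when $\lambda=N/4$ the $\int K|x|^{2}u^{2}$ term alone already forces $u\equiv 0$. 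For $q\in(2,2_{*})$ in case $(2)$ the two pieces of $T_{q}$ carry opposite signs when $\mu<0$, and the argument has to be closed by a boundary Hardy inequality that controls $\int_{\mathbb{R}^{N-1}}K|x'|^{2}u^{q}$ in terms of $\|u\|_{L^{q}_{K}(\mathbb{R}^{N-1})}^{q}$ and the nonnegative interior quantities already present on the left of $(\dagger)$. This last step is the most delicate point in this half of the proof.

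Cases $(3)$ and $(4)$ I would handle instead by testing the equation against an explicit first eigenfunction. For case $(3)$, the first eigenfunction of \eqref{qw1} is $u_{1}(x)=e^{-|x|^{2}/4}$, with eigenvalue $\lambda_{1}=N/2$ and $\partial u_{1}/\partial n=0$, verifying $-\operatorname{div}(K\nabla u_{1})=\tfrac{N}{2}Ku_{1}$; multiplying \eqref{1.5} by $u_{1}$ and integrating by parts yields
\begin{equation*}
\bigl(\tfrac{N}{2}-\lambda\bigr)\!\int_{\mathbb{R}^{N}_{+}}\!Kuu_{1}\,dx=a\!\int_{\mathbb{R}^{N}_{+}}\!Ku^{2^{*}-1}u_{1}\,dx+\mu\!\int_{\mathbb{R}^{N-1}}\!Ku_{1}u^{q-1}\,dx'+\!\int_{\mathbb{R}^{N-1}}\!Ku_{1}u^{2_{*}-1}\,dx'.
\end{equation*}
Under $\lambda\geq N/2$ and $\mu\geq 0$ the left side is $\leq 0$ while the right side is a sum of nonnegative terms, so every term must vanish; positivity of $u_{1}$ together with the critical boundary trace term forces $u\equiv 0$ on $\partial\mathbb{R}^{N}_{+}$, and the interior equation then yields $u\equiv 0$. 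For case $(4)$, the analogous test against the first positive eigenfunction $\psi_{1}$ of \eqref{qw2} (satisfying $-\operatorname{div}(K\nabla\psi_{1})=0$ in $\mathbb{R}^{N}_{+}$ and $\partial\psi_{1}/\partial n=\mu_{1}\psi_{1}$ on $\mathbb{R}^{N-1}$) produces $(\mu_{1}-\mu)\int_{\mathbb{R}^{N-1}}Ku\psi_{1}\,dx'=\lambda\int_{\mathbb{R}^{N}_{+}}Ku\psi_{1}\,dx+a\int_{\mathbb{R}^{N}_{+}}Ku^{2^{*}-1}\psi_{1}\,dx+\int_{\mathbb{R}^{N-1}}K\psi_{1}u^{2_{*}-1}\,dx'$, where $q=2$ is used to keep the $\mu$-term linear; the same sign analysis ($\mu_{1}-\mu\leq 0$, $\lambda\geq 0$) then yields $u\equiv 0$.

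The principal obstacles will be the careful bookkeeping of all weighted boundary integrals that arise when integrating $\operatorname{div}(K\nabla u)$ by parts against multipliers of quadratic growth in $|x|$, and identifying the precise cancellation that produces $(\dagger)$; and, in case $(2)$ at $q\in(2,2_{*})$, locating the sharp boundary Hardy inequality that supplies the final sign control needed to conclude $u\equiv 0$.
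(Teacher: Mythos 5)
Your identity $(\dagger)$ is, in fact, correct: taking (the $|x|^2u$-multiplier identity) $+\;4\times$(the weighted Pohozaev identity for the multiplier $x\cdot\nabla u$) $+\;(2N-4)\times$(the energy identity), the coefficients of $\int K|\nabla u|^2$, $\int K|x|^2|\nabla u|^2$, $\int_{\mathbb{R}^N_+}Ku^{2^*}$ and $\int_{\mathbb{R}^{N-1}}Ku^{2_*}$ all cancel and exactly your $(\dagger)$ remains (it even yields $u\equiv0$ at the endpoint $\lambda=N/4$, $\mu=0$, slightly more than statement (1)). With it, cases (1), (3), (4) and case (2) with $q=2$ go through; your treatment of (3)--(4) by testing against the explicit eigenfunctions $e^{-|x|^2/4}$ and $\psi_1$ is essentially the paper's Lemmas 2.3--2.4. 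Two caveats remain, one minor and one serious. The minor one: the integrations by parts against multipliers of quadratic growth, with the weight $e^{|x|^2/4}$, require knowing that quantities such as $\int K|x|^2|\nabla u|^2$ and $\int K|x|^2u^2$ are finite for the solution; this needs a cut-off/decay argument that you do not supply (the paper sidesteps it by quoting an established Pohozaev identity).

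The serious gap is case (2) with $q\in(2,2_*)$, which you yourself flag. In $(\dagger)$ the only term with the wrong sign when $\mu<0$ is $\mu\frac{q-2}{q}\int_{\mathbb{R}^{N-1}}K|x'|^2u^q$, and the only favorable term that also scales with $|\mu|$ is $4|\mu|\bigl(\frac{N-1}{q}-\frac{N-2}{2}\bigr)\int_{\mathbb{R}^{N-1}}Ku^q$. Hence the ``boundary Hardy inequality'' you invoke would have to take the form $\int_{\mathbb{R}^{N-1}}K|x'|^2u^q\le C\int_{\mathbb{R}^{N-1}}Ku^q+|\mu|^{-1}(\text{the remaining nonnegative terms of }(\dagger))$ with the fixed constant $C=\frac{4q}{q-2}\bigl(\frac{N-1}{q}-\frac{N-2}{2}\bigr)$; since the weight $|x'|^2$ is unbounded and the remaining terms of $(\dagger)$ contain no gradient, no such inequality can hold (traces concentrated at large $|x'|$ violate it), so the sign analysis of $(\dagger)$ alone cannot conclude for $\mu<0$, $q>2$. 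The paper avoids this obstruction by working with the \emph{unweighted} identities of Lemma \ref{lem1.1}: combining them gives $\frac12\int_{\mathbb{R}^N_+}(x\cdot\nabla u)^2=\bigl(\lambda+\frac{N(N-2)}{8}\bigr)\|u\|^2_{L^2(\mathbb{R}^N_+)}+\mu\bigl(\frac{N-1}{q}-\frac{N-2}{2}\bigr)\|u\|^q_{L^q(\mathbb{R}^{N-1})}$, where the $q$-term enters only through the unweighted norm with a single-signed coefficient, and the Hardy-type inequality $\frac{N^2}{4}\int_{\mathbb{R}^N_+}u^2\le\int_{\mathbb{R}^N_+}(x\cdot\nabla u)^2$ of Lemma \ref{lem1.2} then settles both (1) and (2) for the whole range $q\in[2,2_*)$ at once. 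To repair your argument you should either switch to these unweighted identities for cases (1)--(2), or find a different linear combination of your three weighted identities in which the $|x'|^2$-weighted boundary $q$-term does not appear.
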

As a meter of fact, we can deduce that problem (\ref {Th1.0}) has no nontrivial solution under the assumption  $(1)$ or $(2)$
from Pohozaev identity and a Hardy-type inequality (see the proof of Theorem \ref {Th1.0}).

Our second Theorem is concerned with  the existence of a positive solution to equation \eqref{1.1} when $a=1$.
\begin{Thm}\label{Th1.1}
	Let $N\ge 3$ and $a=1$. Problem \eqref{1.1} has a positive solution if one of the following assumptions holds:
	
	$(1)$  $\mu=0$, $ \lambda \in (\bar \lambda, \frac {N}{2})$ ;
	
$(2)$  $\mu \in (0, +\infty)$, $\lambda \in [0, \frac {N} {2})$ if $2<q<2_*$;

$(3)$  $\mu \in (0, \mu_1 -\frac {2\mu_1}{N}\lambda )$, $\lambda \in [0, \frac {N} {2})$ if $q=2$,

\noindent
where
\begin{align} \label{barlambda}
\bar{\lambda}=
	\begin{cases}
\frac{3+\sqrt5}{4},~&~N=3,\\
1,~&~N=4,\\
 \lambda^*_N,~&~N\geq5 ,\\
 	\end{cases}
 \end{align}
and $\lambda_N^*\in (\frac N4, \frac {N-2}{2})$ is a constant dependent of $N$ and  $\mu_1$ is defined by \eqref{2}.
\end{Thm}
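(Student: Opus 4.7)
The strategy is to apply the Mountain Pass Theorem (without the Palais--Smale condition) to the functional $I^1_{\lambda,\mu}$ and then to estimate the Mountain Pass level sharply enough to restore local compactness. The mountain pass geometry comes from the eigenvalue identity $\lambda_1 = N/2$ in (1.8): whenever $\lambda < N/2$ one has
$$\tfrac{1}{2}\|u\|^{2} - \tfrac{\lambda}{2}\|u_+\|^{2}_{L^{2}_{K}(\mathbb{R}^{N}_{+})} \ge \tfrac{1}{2}\bigl(1 - 2\lambda/N\bigr)\|u\|^{2},$$
which is strictly positive in all three cases; in case (3) the additional hypothesis $\mu < \mu_1 - 2\mu_1 \lambda/N$ combined with $\|u\|^{2}_{L^{2}_{K}(\mathbb{R}^{N-1})} \le \|u\|^{2}/\mu_1$ keeps the full quadratic form coercive. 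The critical and $q>2$ subcritical terms are of strictly higher order, so $I^{1}_{\lambda,\mu}(u) \ge \alpha > 0$ on a small sphere $\|u\| = \rho$, while $I^{1}_{\lambda,\mu}(t u_0) \to -\infty$ along any ray with $\int_{\mathbb{R}^{N}_{+}} K u_0^{2^{*}} > 0$.

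Next I would prove a local (PS)$_c$ condition for every
$$c < c^{*} := \min\Bigl\{\tfrac{1}{2N} S^{N/2},\; \tfrac{1}{2(N-1)} \widetilde{S}^{\,N-1}\Bigr\},$$
where $S$ and $\widetilde{S}$ are the best constants for $D^{1,2}(\mathbb{R}^{N}_{+})\hookrightarrow L^{2^{*}}(\mathbb{R}^{N}_{+})$ and $D^{1,2}(\mathbb{R}^{N}_{+})\hookrightarrow L^{2_{*}}(\partial\mathbb{R}^{N}_{+})$ respectively. Boundedness of (PS) sequences is standard, and the compact subcritical embeddings $X \hookrightarrow L^{r}_{K}(\mathbb{R}^{N}_{+})$ $(r<2^{*})$ and $X \hookrightarrow L^{s}_{K}(\mathbb{R}^{N-1})$ $(s<2_{*})$, combined with a concentration--compactness analysis, force any loss of mass to concentrate into at most one interior bubble (energy $\ge \tfrac{1}{2N} S^{N/2}$) or one boundary bubble (energy $\ge \tfrac{1}{2(N-1)}\widetilde{S}^{\,N-1}$).

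To push the Mountain Pass level below $c^{*}$, I would test along $t \mapsto t u_\varepsilon$ with the truncated trace-Sobolev extremal
$$u_\varepsilon(x) = \eta(x)\, U_\varepsilon(x),\qquad U_\varepsilon(x) = \frac{[(N-2)\varepsilon]^{(N-2)/2}}{(|x'|^{2} + (x_N + \varepsilon)^{2})^{(N-2)/2}},$$
concentrated at $0 \in \partial\mathbb{R}^{N}_{+}$, with a smooth cutoff $\eta$ localizing to a ball where the expansion $K(x) = 1 + |x|^{2}/4 + O(|x|^{4})$ reduces every weighted integral to its Euclidean counterpart plus controlled corrections. This yields
$$\max_{t\ge 0} I^{1}_{\lambda,\mu}(t u_\varepsilon) \le \tfrac{1}{2(N-1)} \widetilde{S}^{\,N-1} + A\,\varepsilon^{N-2} - \tfrac{\lambda}{2} B_\varepsilon - \mu\, C_\varepsilon + \text{l.o.t.},$$
where $A > 0$ encodes the weight and cutoff errors, $B_\varepsilon = \|u_\varepsilon\|^{2}_{L^{2}_{K}(\mathbb{R}^{N}_{+})}$, and $C_\varepsilon$ is the appropriate subcritical boundary integral. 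For case (2), the blow-up rate of $C_\varepsilon = \|u_\varepsilon\|^{q}_{L^{q}_{K}(\mathbb{R}^{N-1})}$ with $2 < q < 2_{*}$ dominates every positive remainder, giving $c_{MP} < c^{*}$ for every $\lambda \ge 0$, $\mu > 0$. Case (3) is analogous once the linear ($q=2$) boundary term is reabsorbed via the eigenvalue bound on $\mu$ used in the coercivity step.

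The principal obstacle is case (1), where $\mu = 0$ forces the only negative contribution to be $-\tfrac{\lambda}{2} B_\varepsilon$, and the asymptotics
$$B_\varepsilon \sim \begin{cases} c_3\,\varepsilon \log(1/\varepsilon), & N = 3,\\ c_4\,\varepsilon^{2} \log(1/\varepsilon), & N = 4,\\ c_N\,\varepsilon^{2}, & N \ge 5,\end{cases}$$
must overpower the inevitable positive remainder, of order $\varepsilon^{N-2}$ in general (and of order $\varepsilon^{N-2} \log(1/\varepsilon) = \varepsilon \log(1/\varepsilon)$ when $N=3$). The sharp optimization in $\varepsilon$ produces the quadratic balance $\bar\lambda = (3+\sqrt{5})/4$ when $N=3$, the critical logarithmic-gain value $\bar\lambda = 1 = N/4$ when $N=4$ (since every $\lambda > 1$ then suffices), and a constant $\lambda_N^{*} \in (N/4, (N-2)/2)$ when $N \ge 5$. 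In addition one has to verify throughout that the boundary level governs $c^{*}$ (equivalently, that the test function above does not trigger a lower-energy interior bubble), which is what makes the constant tracking genuinely delicate in small dimensions.
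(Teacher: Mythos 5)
Your overall strategy (mountain pass without (PS), plus a fine estimate of the level) is the right one, but there is a genuine gap at the heart of the argument: for $a=1$ your compactness threshold $c^{*}=\min\bigl\{\tfrac{1}{2N}S^{N/2},\ \tfrac{1}{2(N-1)}S_0^{\,N-1}\bigr\}$ is not the correct one. Because \emph{both} critical nonlinearities are present, a Palais--Smale sequence can concentrate at a boundary point into a profile that solves the doubly critical half-space problem \eqref{2.3} with $\tau=1$, i.e. $-\Delta U=U^{2^{*}-1}$ in $\mathbb{R}^N_+$ with $\partial U/\partial n=U^{2_{*}-1}$ on $\mathbb{R}^{N-1}$. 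The corresponding energy is the number $A$ defined in \eqref{2.5}, and $A$ is \emph{strictly below} both $\tfrac{1}{2N}S^{N/2}$ and $\tfrac{1}{2(N-1)}S_0^{\,N-1}$ (test $\Phi$ with either extremal: the additional negative critical term lowers the sup). Hence proving $c_{MP}<c^{*}$ does not restore compactness; the condition one must verify is $c^1_{\lambda,\mu}<A$, which is exactly the paper's Lemma \ref{lem2.6} and condition \eqref{2.11}.

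This defect propagates to your choice of test function, most damagingly in case $(1)$. With the truncated trace extremal $\hat U_\varepsilon$ one has $\sup_{t>0}\Phi(t\hat u_\varepsilon)\to\sup_{t>0}\Phi(t\hat U_1)>A$, since by Lemma \ref{lem2.2} the infimum $A$ is attained only by $\varphi_{\varepsilon,1}$, the bubble centered at depth $\varepsilon x_N^{0}$, $x_N^{0}=\sqrt{N/(N-2)}$, inside the half-space; the resulting excess over $A$ is of order one and cannot be compensated by the $O(\varepsilon^{2})$, $O(\varepsilon^{2}|\ln\varepsilon|)$ or $O(\varepsilon)$ gains coming from the $\lambda$- and $\mu$-terms. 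The paper therefore tests with $u_\varepsilon=K^{-1/2}\phi\,\varphi_{\varepsilon,1}$ (and, for $N=3$, with a Gaussian factor $e^{-|x|^{2}/8\sqrt5}$ in place of the cutoff), so that the leading term of the max is exactly $A$ and the sign of the $\varepsilon$-correction alone decides, which is where the constants $1$ ($N=4$), $\lambda_N^{*}$ ($N\ge5$) and $(3+\sqrt5)/4$ ($N=3$) actually come from. Two further slips: for $N=3$ the rate is $\|u_\varepsilon\|^{2}_{L^{2}_{K}(\mathbb{R}^{3}_{+})}=O(\varepsilon)$, with no logarithm (your $\varepsilon\log(1/\varepsilon)$ is incorrect), and the value $(3+\sqrt5)/4$ is not produced by a cutoff-based optimization but by the Gaussian-weighted test function and the explicit comparison of the two $O(\varepsilon)$ coefficients; asserting the constants ``follow from the quadratic balance'' leaves precisely the delicate part of the proof unproved.
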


Our final Theorem is concerned with the existence of a positive solution to equation \eqref{1.1} when $a=0$.
\begin{Thm}\label{Th1.2}
	Let $N\ge 3$ and $a=0$. Equation \eqref{1.1} has a positive solution if one of the following assumptions holds:
	
	$(1)$  $\mu=0$,  $ \lambda \in (\hat \lambda, \frac {N}{2})$ ;
	
	$(2)$  $\mu \in (0, +\infty)$, $\lambda \in [0, \frac {N}{2})$ if $2<q<2_*$;

$(3)$  $\mu \in (0, \mu_1 -\frac {2\mu_1}{N}\lambda )$, $\lambda \in [0, \frac {N} {2})$ if $q=2$,

\noindent
where
\begin{align}\label{hatlambda}
\hat{\lambda}=
	\begin{cases}
\frac{3+\sqrt5}{4},~&~N=3,\\
\frac {N}{4}+\frac {N-4} {8},~&~N\geq4,
	\end{cases}
\end{align}
and  $\mu_1$ is defined by \eqref{2}.
\end{Thm}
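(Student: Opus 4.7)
The plan is to obtain a positive solution via the Mountain Pass Theorem applied to the functional
\[
I^{0}_{\lambda,\mu}(u)=\tfrac{1}{2}\|u\|^{2}-\tfrac{\lambda}{2}\|u_{+}\|_{L^{2}_{K}(\mathbb{R}^{N}_{+})}^{2}-\tfrac{1}{2_{*}}\|u_{+}\|_{L^{2_{*}}_{K}(\mathbb{R}^{N-1})}^{2_{*}}-\tfrac{\mu}{q}\|u_{+}\|_{L^{q}_{K}(\mathbb{R}^{N-1})}^{q},
\]
without assuming the Palais--Smale condition. Compactness of $(PS)_{c}$ sequences will be restored below the trace-critical threshold $c^{*}=\tfrac{1}{2(N-1)}S^{N-1}$, where $S$ denotes the best constant in the trace embedding $D^{1,2}(\mathbb{R}^{N}_{+})\hookrightarrow L^{2_{*}}(\partial\mathbb{R}^{N}_{+})$. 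Any nontrivial critical point so produced is nonnegative because \eqref{1} forces $u_{-}\equiv 0$, and then positive by a standard regularity and Hopf-type argument.

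First I would verify the Mountain Pass geometry. For $\lambda\in[0,N/2)$, the identity \eqref{1} gives $\|u\|^{2}-\lambda\|u_{+}\|_{L^{2}_{K}}^{2}\geq(1-2\lambda/N)\|u\|^{2}$; when $q>2$ (cases (1) and (2)) the boundary subcritical term has order strictly greater than $2$ and does not disturb coercivity on a small sphere; when $q=2$ (case (3)) the hypothesis $\mu<\mu_{1}-\tfrac{2\mu_{1}}{N}\lambda$ combined with \eqref{2} preserves positivity of the full quadratic form. The critical trace nonlinearity provides some $e\in X$ with $I^{0}_{\lambda,\mu}(e)<0$, so a $(PS)_{c}$ sequence exists at the min-max value $c=\inf_{\gamma\in\Gamma}\max_{t\in[0,1]}I^{0}_{\lambda,\mu}(\gamma(t))>0$.

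The crucial step is to show $c<c^{*}$ by means of Escobar's extremal family
\[
U_{\epsilon}(x',x_{N})=\frac{c_{N}\,\epsilon^{(N-2)/2}}{\bigl((x_{N}+\epsilon)^{2}+|x'|^{2}\bigr)^{(N-2)/2}},
\]
cut off by a smooth bump $\eta$ supported near the origin so that $u_{\epsilon}=\eta U_{\epsilon}$ lies in $X$. Expanding $\max_{t\geq 0}I^{0}_{\lambda,\mu}(tu_{\epsilon})$ in powers of $\epsilon$ yields a leading term $c^{*}$ plus a positive weight-and-truncation correction, minus $\lambda\|u_{\epsilon}\|^{2}_{L^{2}_{K}(\mathbb{R}^{N}_{+})}$ and $(\mu/q)\|u_{\epsilon}\|^{q}_{L^{q}_{K}(\mathbb{R}^{N-1})}$, up to higher-order remainders. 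In cases (2) and (3) the subcritical boundary term dominates as $\epsilon\to 0^{+}$ for every admissible $\mu>0$ and every $\lambda\geq 0$, delivering the strict inequality at once. In case (1) the only cancellation is provided by $\lambda\|u_{\epsilon}\|^{2}_{L^{2}_{K}}$, so one must balance it against the error terms and optimize in $\epsilon$ to reach the dimension-dependent threshold $\hat{\lambda}$.

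The hard part will be the sharp asymptotics in case (1). The weight $K(x)=e^{|x|^{2}/4}$ forces compact support of $u_{\epsilon}$ and generates a positive correction to $\|u_{\epsilon}\|^{2}$ of exactly the same order in $\epsilon$ as $\|u_{\epsilon}\|^{2}_{L^{2}_{K}(\mathbb{R}^{N}_{+})}$, so the two compete head-to-head, and the shared order changes with the dimension: linear in $\epsilon$ when $N=3$, of size $\epsilon^{2}\log(1/\epsilon)$ when $N=4$, and $\epsilon^{2}$ for $N\geq 5$. In three dimensions two contributions of the same order produce a quadratic condition whose discriminant introduces $\sqrt{5}$ and yields $\hat{\lambda}=(3+\sqrt{5})/4$; for $N\geq 4$ a simpler one-parameter balance gives $\hat{\lambda}=N/4+(N-4)/8$. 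Compared with Theorem~\ref{Th1.1}, the absence of an interior critical term ($a=0$) removes one layer of cancellation and permits the trace integrals to be handled on their own, which is precisely what makes the low-dimensional range $N=3,4$ accessible here.
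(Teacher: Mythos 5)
Your overall strategy (Mountain Pass without (PS), compactness of $(PS)_c$ below $\tfrac{1}{2(N-1)}S_0^{N-1}$ as in Lemma \ref{lem2.7}, Escobar bubbles as test functions, and letting the subcritical boundary term dominate in cases (2)--(3)) matches the paper, and for cases (2)--(3) your sketch would go through once you note that for $N=3$, $q=2$ the needed domination comes from the $\varepsilon|\ln\varepsilon|$ enhancement of the boundary term over the $O(\varepsilon)$ error. The gap is in case (1). First, a structural point: the paper does not test with $\eta U_\varepsilon$ but with $\hat u_\varepsilon=K^{-1/2}\phi\hat U_\varepsilon$ as in \eqref{b2}; the conjugation by $K^{-1/2}$ is what turns the weighted Dirichlet form into $\int|\nabla w|^2-\tfrac12\int w(x\cdot\nabla w)+\tfrac1{16}\int|x|^2w^2$ and produces exactly the constants $\hat\alpha_N,\hat d_N,\hat\gamma_N$ whose ratio gives $\hat\lambda=\tfrac N4+\tfrac{N-4}8$ for $N\ge4$ (Lemmas \ref{lem4.1}--\ref{lem4.3} and \ref{lem4.6}). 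Testing with $\eta U_\varepsilon$ directly, the weight $K\approx 1+\tfrac{|x|^2}4$ multiplies $|\nabla U_\varepsilon|^2$ and yields an $\varepsilon^2$-order correction with a different (and in general larger) constant, so your expansion does not justify the specific threshold claimed in the theorem for $N\ge4$; at best it gives some dimension-dependent bound.

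Second, and decisively, your mechanism for $N=3$ fails. With any compactly supported cutoff the truncation error of the Escobar bubble is itself $O(\varepsilon^{N-2})=O(\varepsilon)$, i.e.\ of the same order as both the weight correction to $\|u_\varepsilon\|^2$ and the gain $\lambda\|u_\varepsilon\|^2_{L^2_K}$, and its constant depends on the cutoff; this is precisely why the paper's Lemmas \ref{lem4.1} and \ref{lem4.3} only record $A_3+O(\varepsilon)$ and $O(\varepsilon)$ without sign control, so no ``balance and optimize in $\epsilon$'' can extract the threshold $\tfrac{3+\sqrt5}{4}$, and there is no quadratic/discriminant effect of the kind you describe. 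The paper's key idea, which is missing from your proposal, is to discard the cutoff in dimension $3$ and use instead $\hat v_\varepsilon=K^{-1/2}e^{-|x|^2/(8\sqrt5)}\hat U_\varepsilon$: with $\psi(x)=e^{-|x|^2/(8\sqrt5)}$ one has $\nabla\psi=-\tfrac1{4\sqrt5}\psi x$ and $\int_{\mathbb R^3_+}\psi^2\,dx=2\sqrt5\int_{\mathbb R^3_+}\psi^2|x|^{-2}dx$, so all first-order corrections collapse onto the single integral $\int\psi^2|x|^{-2}$, giving $\|\hat v_\varepsilon\|^2<A_3+\tfrac{(3+\sqrt5)\varepsilon}{4}\int\psi^2|x|^{-2}$ while $\|\hat v_\varepsilon\|^2_{L^2_K}>\varepsilon\int\psi^2|x|^{-2}-O(\varepsilon^2|\ln\varepsilon|)$ (Lemmas \ref{lem4.4}--\ref{lem4.5}); the condition $\lambda>\tfrac{3+\sqrt5}{4}$ then makes the $\varepsilon$-coefficient negative and yields \eqref{4.0}. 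Without this specially chosen exponential weight (whose parameter $8\sqrt5$ is where the $\sqrt5$ actually comes from), the $N=3$ part of case (1) is not proved.
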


\begin{Rem}\label{remark1.2}
In \cite{2021}, Ferreira et al. considered equation \eqref{1.1} with $N\geq3, \lambda\in \mathbb{R}, a=1$ and $\mu=0$. Since they used the first eigenfunction of the linear problem \eqref{qw1} as a test function to show that the Mountain Pass value belongs to the compactness range, the existence result for problem \eqref{1.1} is obtained only if $\lambda$ is near ${N}/{2}$ from left.
However, we  choose the test function related to the achieved function of the  infimum \eqref{2.1}  to get a local compactness result and then establish the natural range for the parameters $N$ and $\lambda$ from some delicate
 estimates of the test functions.
\end{Rem}

\begin{Rem}\label{remark1.2}
When $N\geq 3, \lambda\in \mathbb{R}, a=0$ and $\mu=0$, Ferreira et al. \cite{2021} showed that \eqref{1.1} has a positive solution for $N\geq 7$ and
${N}/{4}+{(N-4)}/{8}<\lambda<{N}/{2}$. Here, we complement the existence result for lower dimensions $3\leq N \leq6$.
Moreover, we also extend the result in  \cite{116} when $\lambda=0, a=0$
and $\mu >0$.
\end{Rem}

The existence and nonexistence results given by Theorem \ref{Th1.0} - Theorem \ref{Th1.2} can be described on the $(\lambda, \mu)$ plane as Figure 1.
The pink regions and the red line stand for the regions where  problem (\ref {1.1}) exists a positive solution, while the blue regions correspond to the regions where problem (\ref {1.1}) has no positive solution. Here, the curve $\eta$ is given by $1-{2\lambda}/{N}-\mu/\mu_1=0$,
$\Lambda^*=
	\begin{cases}
\bar \lambda \ \   if   &a=1, \\
\hat \lambda \ \   if  &a=0,
	\end{cases}$
and the values of $\bar{\lambda}, \hat{\lambda}$ are given by (\ref {barlambda}) and (\ref {hatlambda}) respectively.
\begin{figure*}[http]
  \centering
  \includegraphics[width=0.8\textwidth,height=0.3\textheight]{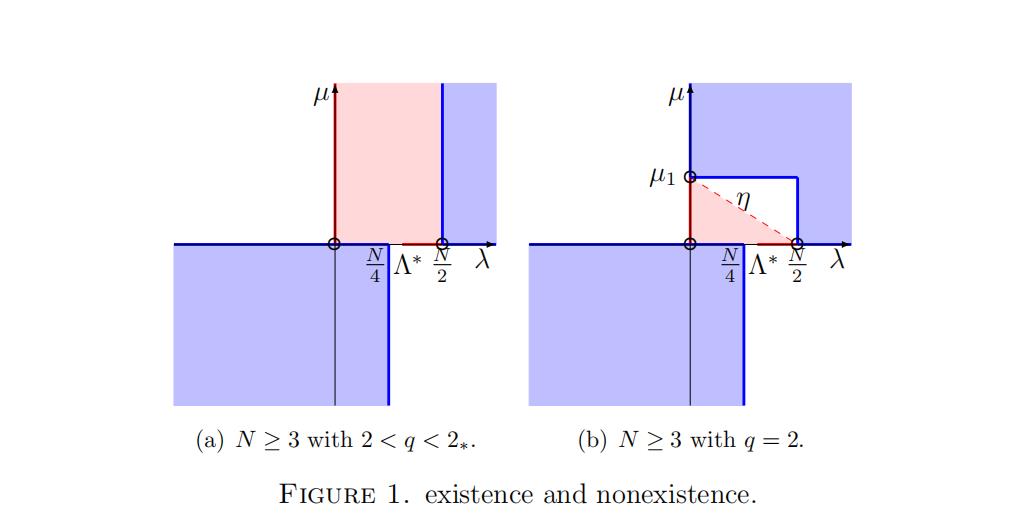}
\end{figure*}
\begin{Rem}\label{remark1.7}
From  Figure 1, we have the following observations:

 $(1)$  For $N\geq3$ and $\mu=0$, problem \eqref{1.1} has a positive solution if  $\lambda\in(\Lambda^*, \frac{N}{2})$
  and has no positive solution if $\lambda\in(-\infty, \frac{N}{4})\cup[\frac{N}{2},+\infty)$. Unfortunately,
  we do not know what happens if $\lambda\in[\frac{N}{4}, \Lambda^*]$.  Fortunately, we can estimate that $\Lambda ^* < \frac {N-2}{2}$ if $N\ge 5$ and
  $\Lambda^* =\bar \lambda =\hat \lambda =\frac N4$ if $N=4$  which provide  a largest interval $(1, 2)$ for parameter $\lambda$ where problem  \eqref{1.1} has a positive solution when $N=4$.

$(2)$ For $N\geq3$, $\lambda \ge 0$, $\mu>0$ and $q\in(2, 2^*)$, we obtain that problem \eqref{1.1} has a positive solution if and only if  $\lambda\in[0, \frac{N}{2})$.

\end{Rem}

In the proof of Theorem \ref{Th1.1} and Theorem \ref{Th1.2}, we follow from \cite{0} and \cite{2021}. After obtaining the local compactness result for the functional $I^a_{\lambda, \mu}$, the main step is to prove that the Mountain Pass level of the functional belongs to the compact range. At this point, different from the test function used in \cite{2021}, we construct the following test functions
\begin{align*}
u_{{\varepsilon}}(x)
&=K(x)^{-\frac{1}{2}}\phi(x)\frac{\big({\varepsilon^2} N(N-2)\big) ^{\frac{N-2}{4}}}{\big(\varepsilon^{2}+|x'|^{2}+|x_{N}+\varepsilon x_{N}^{0} |^{2}\big)^{\frac{N-2}{2}}},\quad N\geq4,\\
v_{\varepsilon}(x)&=K(x)^{-\frac{1}{2}}e^{-|x|^2/8\sqrt{5}}\frac{\big({\varepsilon^2} N(N-2)\big) ^{\frac{N-2}{4}}}{\big(\varepsilon^{2}+|x'|^{2}+|x_{N}+\varepsilon x_{N}^{0} |^{2}\big)^{\frac{N-2}{2}}}, \quad N=3,
\end{align*}
for Theorem \ref{Th1.1} and the following test functions
\begin{align*}
\hat{u}_\varepsilon(x)&=K(x)^{-\frac{1}{2}}\phi(x)
\bigg(\frac{\varepsilon}{|x{'}|^{2}+|x_{N}+{\varepsilon}|^{2}}\bigg)^{\frac{N-2}{2}},\quad N\geq4,\\
\hat{v}_{\varepsilon}(x)&=K(x)^{-\frac{1}{2}}e^{-|x|^2/8\sqrt{5}}\bigg(\frac{\varepsilon}{|x{'}|^{2}+|x_{N}+{\varepsilon}|^{2}}\bigg)^{\frac{N-2}{2}},\quad N=3,
\end{align*}
for Theorem \ref{Th1.2}, where $\phi(x) \in C_{0}^{\infty}({\mathbb{R}}^{N},[0,1])$ is a cut-off function and
$x_{N}^{0}:={\sqrt{{N}/{(N-2)}}}$. Then we succeed to localize correctly the Mountain Pass level in the range where (PS) condition holds by performing some delicate estimates on the asymptotic behavior of functions $u_{{\varepsilon}}, \hat{u}_{{\varepsilon}}, v_{{\varepsilon}}, \hat{v}_{{\varepsilon}}$ as $\varepsilon \to 0$.

The rest of this paper is structured as follows.
The nonexistence result is preformed in Section \ref{S1}.
In Section \ref{S2}, after a brief introduction of the preliminary results, we verify the Mountain Pass geometric conditions for $I^a_{\lambda, \mu}$ and establish the local compactness for the functional under the assumption \eqref{2.11} or \eqref{2.12}.
In Section \ref{S3}, we are devoted to providing careful estimates of $u_{\varepsilon}$ and $v_\varepsilon$, and then establish the existence result for equation \eqref{1.1} when $a=1$ by verifying  condition \eqref{2.11}. In Section \ref{S4}, we investigate the existence of a positive solution to equation \eqref{1.1} when $a=0$ by verifying the condition \eqref{2.12}. Some fine estimates of $\hat{u}_{\varepsilon}$ and $\hat{v}_\varepsilon$ are also given.

\vs{3mm}
\section{The nonexistence}\label{S1}
\vs{2mm}

In this section, we deal with the nonexistence result for equation \eqref{1.1}. To this end, we introduce a Pohozaev identity for problem \eqref{1.1} which was first proved by L. C. F. Ferreira in \cite {2021} for $\mu =0$ and extended by Y. Deng in \cite {0} for $\mu \in \mathbb{R} $. For  convenience,  we always assume that $N\geq3$, $\lambda, \mu\in\mathbb{R}$, $a\in\{0, 1\}$ and $2\leq q<2_*$ in this section.

\begin{Lem}\label{lem1.1} (Pohozaev identity) If $u\in C^2(\mathbb{R}^N_+)\cap X$ is a solution of equation \eqref{1.1}, then
\begin{equation*}
{\|\nabla{u}\|_{{L^{2}}(\mathbb{R}^{N}_{+})}^{2}}-a{\|{u}\|}_{L^{{2}^{*}}({\mathbb{R}}^{N}_{+})}^{2^{*}}
-{\|{u}\|}_{L^{{2}_{*}}({\mathbb{R}}^{N-1})}^{2_{*}}
=\Big(\lambda-\frac{N}{4}\Big)\|u\|_{{L^{2}}(\mathbb{R}^{N}_{+})}^{2}+
\mu{\|{u}\|}_{L^q({\mathbb{R}}^{N-1})}^{q}
\end{equation*}
and
\begin{equation*}
	\begin{aligned}
&\frac{N-2}{2}\Big({\|\nabla{u}\|_{{L^{2}}(\mathbb{R}^{N}_{+})}^{2}}
-a{\|{u}\|}_{L^{{2}^{*}}({\mathbb{R}}^{N}_{+})}^{2^{*}}
-{\|{u}\|}_{L^{{2}_{*}}({\mathbb{R}}^{N-1})}^{2_{*}}\Big)\\
&=\frac{\lambda N}{2}\|u\|_{{L^{2}}(\mathbb{R}^{N}_{+})}^{2}+\frac{\mu {(N-1)}}{q}{\|{u}\|}_{L^q({\mathbb{R}}^{N-1})}^{q}-\frac{1}{2}{\int_{\mathbb{R}^{N}_{+}}(x \cdot{\nabla u})^2dx}.
	\end{aligned}
\end{equation*}
\end{Lem}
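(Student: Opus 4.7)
I would derive both identities by multiplying the interior equation in \eqref{1.1} by two different test functions (namely $u$ and the conformal vector field $x\cdot\nabla u$) and integrating by parts on $\mathbb{R}^N_+$, exploiting the crucial geometric fact that the outward normal $n=-e_N$ on $\partial\mathbb{R}^N_+$ satisfies $x\cdot n=0$ there, so that many boundary contributions drop out automatically.

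\textbf{First identity.} Multiplying the interior equation by $u$ and integrating, the term $-\Delta u\cdot u$ yields $\int_{\mathbb{R}^N_+}|\nabla u|^2-\int_{\mathbb{R}^{N-1}}u\,\partial u/\partial n$, while $-\tfrac{1}{2}(x\cdot\nabla u)u=-\tfrac{1}{4}x\cdot\nabla(u^2)$ integrates to $\tfrac{N}{4}\int_{\mathbb{R}^N_+}u^2$ because the associated boundary contribution $\tfrac14\int_{\mathbb{R}^{N-1}}(x\cdot n)u^2\,dx'$ vanishes by $x\cdot n=0$. Plugging the Neumann condition into $\int_{\mathbb{R}^{N-1}}u\,\partial u/\partial n$ produces $\mu\|u\|_{L^q(\mathbb{R}^{N-1})}^q+\|u\|_{L^{2_*}(\mathbb{R}^{N-1})}^{2_*}$, and rearranging yields the first identity.

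\textbf{Second identity.} Multiplying the interior equation by $x\cdot\nabla u$ and using the standard vector-field computation
\begin{equation*}
\int_{\mathbb{R}^N_+}(-\Delta u)(x\cdot\nabla u)\,dx=-\frac{N-2}{2}\int_{\mathbb{R}^N_+}|\nabla u|^2\,dx+\frac{1}{2}\int_{\mathbb{R}^{N-1}}(x\cdot n)|\nabla u|^2\,dx'-\int_{\mathbb{R}^{N-1}}(x\cdot\nabla u)\frac{\partial u}{\partial n}\,dx',
\end{equation*}
the middle boundary term disappears by $x\cdot n=0$. On the boundary $x\cdot\nabla u=x'\cdot\nabla_{x'}u$; inserting the Neumann condition and applying the tangential integration by parts $\int_{\mathbb{R}^{N-1}}x'\cdot\nabla_{x'}(|u|^p)\,dx'=-(N-1)\int|u|^p\,dx'$ together with the exponent identity $(N-1)/2_*=(N-2)/2$ converts the remaining boundary integral into $-\tfrac{\mu(N-1)}{q}\|u\|_{L^q(\mathbb{R}^{N-1})}^q-\tfrac{N-2}{2}\|u\|_{L^{2_*}(\mathbb{R}^{N-1})}^{2_*}$. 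The drift term contributes $-\tfrac{1}{2}\int(x\cdot\nabla u)^2\,dx$ directly. On the right-hand side, the rule $\int|u|^{p-2}u(x\cdot\nabla u)\,dx=-\tfrac{N}{p}\int|u|^p\,dx$ yields $-\tfrac{\lambda N}{2}\|u\|_{L^2(\mathbb{R}^N_+)}^2$ from the linear term and $-a\tfrac{N-2}{2}\|u\|_{L^{2^*}(\mathbb{R}^N_+)}^{2^*}$ from the critical interior term. Rearranging these contributions produces the stated Pohozaev identity.

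\textbf{Main obstacle.} The principal technical point is the justification of all integrations by parts on the unbounded domain $\mathbb{R}^N_+$. Membership $u\in X$ forces $e^{|x|^2/4}|\nabla u|^2\in L^1$, which together with $u\in C^2(\mathbb{R}^N_+)$ and the equation gives Gaussian-type decay of $u$ and $\nabla u$ at infinity, controlling $|x\cdot\nabla u|^2$ and every related boundary integrand. My plan is to perform each manipulation on the truncated domain $B_R\cap\mathbb{R}^N_+$, verify that the outer contributions on $\partial B_R\cap\mathbb{R}^N_+$ vanish as $R\to\infty$ thanks to this rapid decay, and only then pass to the limit. Since the identity in the case $\mu=0$ is already established in \cite{2021} and the extension to $\mu\in\mathbb{R}$ in \cite{0}, the detailed approximation argument can be imported directly from those references.
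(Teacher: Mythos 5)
Your computation is correct and reproduces exactly the two stated identities: testing the interior equation with $u$ gives the first one, testing with $x\cdot\nabla u$ gives the second, and your bookkeeping (the fact that $x\cdot n=0$ on $\partial\mathbb{R}^N_+$ kills the $\frac14\int(x\cdot n)u^2$ and $\frac12\int(x\cdot n)|\nabla u|^2$ terms, the tangential identity on $\mathbb{R}^{N-1}$, and the exponent relations $\frac{N-1}{2_*}=\frac{N}{2^*}=\frac{N-2}{2}$) all checks out against the lemma. The paper, however, does not carry out this derivation at all: its proof is a one-line invocation of the general Pohozaev identity of Lemma 2.1 in \cite{0}, applied with $f(u)=\lambda u+a|u|^{2^*-2}u$ and $g(u)=\mu|u|^{q-2}u+|u|^{2_*-2}u$, so your route is the unpacked version of what that cited lemma encapsulates. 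What the citation buys is precisely the point you flag as the main obstacle: the rigorous justification that $u$, $\nabla u$ and $x\cdot\nabla u$ have enough integrability and decay for the integrations by parts and for the outer contributions on $\partial B_R\cap\mathbb{R}^N_+$ to vanish as $R\to\infty$. Be aware that your claim that $u\in X\cap C^2(\mathbb{R}^N_+)$ ``gives Gaussian-type decay'' is not immediate from the weighted $L^1$ information alone and would require the elliptic/approximation estimates of \cite{0} and \cite{2021}; since you ultimately import exactly that argument from the same references that the paper cites wholesale, your proposal is consistent with, and no less complete than, the paper's own treatment.
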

\begin{proof}
 The identities follow from  Lemma 2.1  in \cite{0} by taking $f(u):=\lambda u+a{|u|^{2^*-2}u}$ and  $g(u):=\mu{|u|^{q-2}u}+{|u|^{{2}_{*}-2}u}$.
\end{proof}

The following Lemmas are essential to the proof of Theorem \ref{Th1.0}.
\begin{Lem}\label{lem1.2}(\cite{2021}, Proposition 3.3).
	 For any $u\in X$, there holds
\begin{equation*}
\frac{N^2}{4}\int_{\mathbb{R}^{N}_{+}} u^2dx\leq\int_{\mathbb{R}^{N}_{+}} (x \cdot{\nabla u})^2dx.
\end{equation*}
\end{Lem}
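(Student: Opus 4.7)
The plan is to derive the inequality by combining integration by parts with Cauchy--Schwarz, exploiting the crucial fact that on $\partial\mathbb{R}^{N}_{+}=\{x_{N}=0\}$ the outward unit normal is $n=-e_{N}$, so that $x\cdot n=-x_{N}=0$ on the boundary. This makes the half-space behave essentially like $\mathbb{R}^{N}$ for the vector field $F(x)=x$.

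First I would work with $u\in C_{c}^{\infty}(\overline{\mathbb{R}^{N}_{+}})$ so all manipulations are automatic. Starting from the pointwise identity
\begin{equation*}
\operatorname{div}(x\,u^{2})=N u^{2}+2\,u\,(x\cdot\nabla u),
\end{equation*}
I would integrate over $\mathbb{R}^{N}_{+}$. The divergence theorem yields a boundary integral $\int_{\partial\mathbb{R}^{N}_{+}} u^{2}(x\cdot n)\,d\sigma$, which vanishes identically since $x\cdot n=0$ there. Consequently
\begin{equation*}
\int_{\mathbb{R}^{N}_{+}} u\,(x\cdot\nabla u)\,dx = -\frac{N}{2}\int_{\mathbb{R}^{N}_{+}} u^{2}\,dx.
\end{equation*}
Applying Cauchy--Schwarz to the left-hand side gives
\begin{equation*}
\frac{N^{2}}{4}\left(\int_{\mathbb{R}^{N}_{+}} u^{2}\,dx\right)^{2} \le \int_{\mathbb{R}^{N}_{+}} u^{2}\,dx \cdot \int_{\mathbb{R}^{N}_{+}} (x\cdot\nabla u)^{2}\,dx,
\end{equation*}
and dividing by $\int u^{2}$ (the case $u\equiv 0$ being trivial) delivers the desired inequality.

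To extend the estimate to arbitrary $u\in X$, I would use density. Pick $u_{n}\in C_{c}^{\infty}(\overline{\mathbb{R}^{N}_{+}})$ with $\|u_{n}-u\|\to 0$. The continuous embedding $X\hookrightarrow L^{2}_{K}(\mathbb{R}^{N}_{+})$ recalled in the paper gives $u_{n}\to u$ in $L^{2}_{K}$, hence in $L^{2}(\mathbb{R}^{N}_{+})$ since $K\ge 1$. For the right-hand side, I would use the elementary bound $|x|^{2}\le C\,K(x)=C e^{|x|^{2}/4}$ on $\mathbb{R}^{N}$, so that
\begin{equation*}
\int_{\mathbb{R}^{N}_{+}} (x\cdot\nabla u_{n}-x\cdot\nabla u)^{2}\,dx\le \int_{\mathbb{R}^{N}_{+}}|x|^{2}|\nabla(u_{n}-u)|^{2}\,dx\le C\,\|u_{n}-u\|^{2}\to 0,
\end{equation*}
which shows that both sides of the inequality pass to the limit.

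The argument is short and the only genuine subtlety is confirming that the weighted norm of $X$ controls both $\int u^{2}$ and $\int(x\cdot\nabla u)^{2}$ uniformly, which is precisely the point where the Gaussian weight $K(x)$ dominating $|x|^{2}$ is used; no other obstacle is expected.
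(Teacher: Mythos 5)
Your proof is correct. The paper itself does not prove this lemma but simply cites it as Proposition 3.3 of \cite{2021}, and your argument — the divergence identity $\operatorname{div}(x\,u^{2})=Nu^{2}+2u(x\cdot\nabla u)$, the vanishing of the boundary term since $x\cdot n=-x_{N}=0$ on $\partial\mathbb{R}^{N}_{+}$, Cauchy--Schwarz, and then density using $|x|^{2}\le C e^{|x|^{2}/4}=CK(x)$ to control $\int(x\cdot\nabla u)^{2}$ by $\|u\|^{2}$ — is exactly the standard route by which such Hardy-type inequalities are established, so it matches the cited source's approach while having the merit of being self-contained here.
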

\begin{Lem}\label{lem1.3}  Problem  \eqref{1.1} has no positive solution if $\mu \ge 0$ and $\lambda \ge \frac{N}{2}$.
\end{Lem}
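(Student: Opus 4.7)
The plan is to compare a hypothetical positive solution $u$ with the first eigenfunction of the weighted linear Neumann problem~\eqref{qw1}. Since the embedding $X\hookrightarrow L^{2}_{K}(\mathbb{R}^{N}_{+})$ is compact, the infimum defining $\lambda_{1}=N/2$ in~\eqref{1} is attained by some $\varphi_{1}\in X$; after replacing it by $|\varphi_{1}|$ and invoking the strong maximum principle for the self-adjoint weighted operator $-\mathrm{div}(K\nabla\,\cdot\,)$, one may take $\varphi_{1}$ to be a positive classical solution of $-\mathrm{div}(K\nabla\varphi_{1})=\tfrac{N}{2}K\varphi_{1}$ in $\mathbb{R}^{N}_{+}$ satisfying $\partial\varphi_{1}/\partial n=0$ on $\partial\mathbb{R}^{N}_{+}$.

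Suppose for contradiction that $u\in C^{2}(\mathbb{R}^{N}_{+})\cap X$ is a positive solution of~\eqref{1.1} (equivalently of~\eqref{1.5}). I would multiply the equation for $u$ by $\varphi_{1}$ and integrate against the weight $K$ over $\mathbb{R}^{N}_{+}$, inserting the boundary condition of $u$ on $\partial\mathbb{R}^{N}_{+}$; in parallel, I would multiply the eigenvalue equation for $\varphi_{1}$ by $u$ and integrate, noting that the boundary term vanishes because $\partial\varphi_{1}/\partial n=0$. Subtracting the two resulting identities should yield
\begin{equation*}
\begin{aligned}
\Big(\tfrac{N}{2}-\lambda\Big)\int_{\mathbb{R}^{N}_{+}}K(x)u\varphi_{1}\,dx &= a\int_{\mathbb{R}^{N}_{+}}K(x)u^{2^{*}-1}\varphi_{1}\,dx+\mu\int_{\mathbb{R}^{N-1}}K(x',0)u^{q-1}\varphi_{1}\,dx'\\
&\quad +\int_{\mathbb{R}^{N-1}}K(x',0)u^{2_{*}-1}\varphi_{1}\,dx'.
\end{aligned}
\end{equation*}
Under the hypotheses $\lambda\ge N/2$ and $\mu\ge 0$, the left-hand side is nonpositive while each of the three summands on the right is nonnegative, so the identity forces every summand to vanish, and in particular $\int_{\mathbb{R}^{N-1}}K(x',0)u^{2_{*}-1}\varphi_{1}\,dx'=0$.

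The main obstacle will be converting this forced vanishing into a genuine contradiction, that is, excluding the degenerate possibility $u\equiv 0$ on $\partial\mathbb{R}^{N}_{+}$. If this occurred, the Neumann boundary condition in~\eqref{1.1} would simultaneously force $\partial u/\partial n\equiv 0$ on $\partial\mathbb{R}^{N}_{+}$, and then $u$ would be a positive $C^{2}$ function solving a linear second-order elliptic equation in $\mathbb{R}^{N}_{+}$ with both Dirichlet and Neumann data vanishing on the hyperplane $\{x_{N}=0\}$; the Hopf boundary point lemma applied at any point $(x',0)$ would then produce $\partial u/\partial n(x',0)<0$, contradicting $\partial u/\partial n=0$. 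A secondary technical point I would need to verify is the weighted integration-by-parts on the unbounded half-space against the rapidly growing weight $K(x)=e^{|x|^{2}/4}$; this is handled by truncating to half-balls $B_{R}^{+}$, integrating by parts there, and then using the continuous embeddings $X\hookrightarrow L^{r}_{K}$ to discard the boundary-at-infinity remainders as $R\to\infty$.
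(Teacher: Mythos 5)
Your proposal is correct and follows essentially the same route as the paper: test the equation for $u$ against the positive first eigenfunction $\varphi_1$ of \eqref{qw1}, subtract the eigenvalue identity, and read off $\lambda<\frac{N}{2}$ from the signs of the resulting terms. The only difference is that you additionally rule out the degenerate possibility $u\equiv 0$ on $\partial\mathbb{R}^N_+$ via the Hopf boundary point lemma and justify the weighted integration by parts, points the paper's shorter argument leaves implicit.
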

\begin{proof}
Note that the first equation in \eqref{qw1} can be written as
$$-\mbox{div}(K(x)\nabla {u})= \lambda K(x)u.$$
Hence, if $\varphi_1>0$
is the first eigenfunction corresponding to $\lambda_1$, it must be a critical point of the functional
\begin{equation*}
	J(v):={\frac{1}{2}}{\|{v}\|}^{2}-{\frac{\lambda_1 }{2}}\|v\|_{L^{{2}}_{K}({\mathbb{R}}^{N}_{+})}^{2},\quad v\in X.
\end{equation*}

Suppose by contradiction that $u$ is a positive solution of problem \eqref{1.1}, it must be the critical point of the functional $J_{\lambda, \mu}$,
where
\begin{equation*}
	J_{\lambda, \mu}(u):={\frac{1}{2}}{\|{u}\|}^{2}-{\frac{\lambda }{2}}\|u\|_{L^{{2}}_{K}({\mathbb{R}}^{N}_{+})}^{2}
-{\frac{a }{2^{*}}}\|{u}\|_{L^{{2}^{*}}_{K}({\mathbb{R}}^{N}_{+})}^{2^{*}}
-{\frac{1 }{2_{*}}}\|{u}\|_{L^{{2}_{*}}_{K}({\mathbb{R}}^{N-1})}^{2_{*}}
-{\frac{\mu }{q}}{\|{u}\|}_{{L_{K}^{q}}({\mathbb{R}}^{N-1})}^{q}.\notag
\end{equation*}
Thus we have $\langle {J'_{\lambda, \mu}(u)},\varphi_1\rangle=0$. It follows from the fact  $\langle {J{'}(\varphi_1)},{u}\rangle =0$ that
\begin{equation*}
	\begin{aligned}
	(\lambda_1-\lambda)\int_{\mathbb{R}^{N}_{+}}K(x)u\varphi_1dx
&=a\int_{\mathbb{R}^{N}_{+}}K(x)|u|^{2^{*}-2}u\varphi_1dx
		+\int_{\mathbb{R}^{N-1}} K(x{'},0)|u|^{2_{*}-2}u\varphi_1dx'\\
&\quad+\mu\int_{\mathbb{R}^{N-1}}  K(x{'},0)|u|^{q-2}u\varphi_1dx'.
	\end{aligned}
\end{equation*}
We conclude from $a\in\{0,1\}, \mu\geq0$ and $\varphi_1>0$ that $\lambda<\frac{N}{2}$ if $u$ is a positive solution of equation \eqref{1.1}. This contradicts with the assumption $\lambda\ge \frac{N}{2}$.
\end{proof}

The proof of following Lemma is similar to Lemma \ref{lem1.3}, we omit details  here.
\begin{Lem}\label{lem1.4}  Problem  \eqref{1.1} has no positive solution if  $\lambda \ge 0$,  $\mu\ge \mu_1$ and $q=2$.
\end{Lem}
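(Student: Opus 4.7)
The plan is to mimic the proof of Lemma \ref{lem1.3}, but now use the first eigenfunction of the boundary eigenvalue problem \eqref{qw2} as the test function rather than that of \eqref{qw1}. First I would note that, since the embedding $X \hookrightarrow L^{2}_K(\mathbb{R}^{N-1})$ is compact, the infimum defining $\mu_1$ in \eqref{2} is attained at some $\psi_1 \in X$. By replacing $\psi_1$ with $|\psi_1|$ and invoking the strong maximum principle (and the Hopf-type boundary lemma applied to the weighted operator $-\mathrm{div}(K(x)\nabla \cdot\,)$), I may assume $\psi_1 > 0$ in $\overline{\mathbb{R}^N_+}$ and $\psi_1$ solves, in the weak sense,
\begin{equation*}
\int_{\mathbb{R}^N_+} K(x)\,\nabla \psi_1 \cdot \nabla v\, dx \;=\; \mu_1 \int_{\mathbb{R}^{N-1}} K(x',0)\,\psi_1 v\, dx' \qquad \forall\, v \in X.
\end{equation*}

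Next I would argue by contradiction: assume $u$ is a positive solution of \eqref{1.1} with $\lambda \ge 0$, $\mu \ge \mu_1$, $q=2$. Then $u$ is a critical point of $J_{\lambda,\mu}$, and testing $J'_{\lambda,\mu}(u)$ against $\psi_1$ and using the symmetry of the weighted $H^1$ inner product to rewrite $\int K(x)\nabla u\cdot \nabla \psi_1$ via the eigenfunction identity above, I obtain
\begin{equation*}
(\mu_1 - \mu)\!\int_{\mathbb{R}^{N-1}}\! K(x',0)\,u\psi_1\, dx' = \lambda\!\int_{\mathbb{R}^N_+}\! K(x)\,u\psi_1\, dx + a\!\int_{\mathbb{R}^N_+}\! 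K(x)\,u^{2^{*}-1}\psi_1\, dx + \!\int_{\mathbb{R}^{N-1}}\! K(x',0)\,u^{2_{*}-1}\psi_1\, dx',
\end{equation*}
where the $q=2$ term has been absorbed into the left-hand side. Since $u,\psi_1 > 0$ and $\lambda \ge 0$, every term on the right is nonnegative, and the boundary integral $\int K(x',0)\,u^{2_{*}-1}\psi_1\,dx' > 0$ strictly. On the other hand, $\mu \ge \mu_1$ forces the left-hand side to be nonpositive, which is the desired contradiction.

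The only nonroutine point is justifying that $\mu_1$ is attained by a positive eigenfunction; this I would handle by the standard compact-embedding + maximum-principle argument sketched above. Everything else is a direct transcription of the scheme of Lemma \ref{lem1.3}, with the roles of $(\lambda_1,\varphi_1)$ in the interior problem replaced by $(\mu_1,\psi_1)$ in the boundary problem, exploiting crucially that the zeroth-order boundary term $\mu|u|^{q-2}u$ with $q=2$ pairs against $\psi_1$ exactly like the eigenvalue term $\mu_1 u$.
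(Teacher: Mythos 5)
Your proposal is correct and is exactly what the paper intends: the paper omits the proof of this lemma, stating it is "similar to Lemma \ref{lem1.3}", and your argument is precisely that adaptation, testing against a positive eigenfunction attaining $\mu_1$ in \eqref{2} and using $\lambda\ge 0$, $a\in\{0,1\}$, $\mu\ge\mu_1$ to force a sign contradiction. Your extra remarks on the attainment of $\mu_1$ via the compact embedding $X\hookrightarrow L^2_K(\mathbb{R}^{N-1})$ and on the strict positivity of the boundary term are sensible and at least as careful as the paper's own treatment.
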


\vs{2mm}

{\bf Proof of Theorem \ref{Th1.0}.}
From Lemma \ref{lem1.1}, there holds
\begin{equation*}
\mu \Big(\frac{N-1}{q}-\frac{N-2}{2}\Big){\|{u}\|}_{L^q({\mathbb{R}}^{N-1})}^{q}
+\Big(\lambda+\frac{N(N-2)}{8}\Big)\|u\|_{{L^{2}}(\mathbb{R}^{N}_{+})}^{2}
=\frac{1}{2}{\int_{\mathbb{R}^{N}_{+}}(x \cdot{\nabla u})^2dx}.
\end{equation*}
Combining the above with Lemma \ref{lem1.2}, we conclude
\begin{equation*}
\mu\Big(\frac{N-1}{q}-\frac{N-2}{2}\Big){\|{u}\|}_{L^p({\mathbb{R}}^{N-1})}^{p}
+\Big(\lambda-\frac{N}{4}\Big)\|u\|_{{L^{2}}(\mathbb{R}^{N}_{+})}^{2}\geq0.
\end{equation*}
Then $u\equiv0$ if either $\mu=0, \lambda<{N}/{4}$ or $\mu<0, \lambda\leq{N}/{4}$. This, together with Lemma \ref{lem1.3} and Lemma \ref{lem1.4}, we complete  the proof of Theorem \ref{Th1.0}.
\qed
\vs{3mm}

\section{ Preliminaries and local compactness}\label{S2}
\vs{2mm}

In this section, we present some preliminary lemmas including the local compactness for $I^a_{\lambda, \mu}$ under the assumption \eqref{2.11} or \eqref{2.12} and some estimates for the test functions which are used to verify the assumptions \eqref{2.11} and \eqref{2.12}. The proofs of those lemmas can be found in the corresponding references.

Recall that the best constant of the Sobolev trace embedding $D^{1,2}({\mathbb{R}}^{N}_{+})\hookrightarrow {L^{{2}_{*}}}(\mathbb{R}^{N-1})$ (see \cite{1112} and \cite{18}) given by
\begin{equation}\label{2.8}
S_{0}:=\inf_{{u\in D^{1,2}({\mathbb{R}}^{N}_{+})}\setminus \{0\}}{\frac{{\|\nabla{u}\|_{{L^{2}}(\mathbb{R}^{N}_{+})}^{2}}}{{{\|u\|_{L^{{2}_{*}}({\mathbb{R}}^{N-1})}^{2}}}}}
\end{equation}
is achieved by
\begin{equation}\label{2.9}
{\hat{U}_{\varepsilon}(x)}=\bigg(\frac{\varepsilon}{|x{'}|^{2}+|x_{N}+{\varepsilon}|^{2}}\bigg)^{\frac{N-2}{2}},
\end{equation}
where $\varepsilon>0$. Moreover, by the same argument as Theorem 3.3 in \cite{01}, we have the following Lemmas.
\begin{Lem}\label{lem2.1}
	For any $\theta \in (0,1]$, the infimum
\begin{equation}
S_{\theta}:=\inf_{{u\in D^{1,2}({\mathbb{R}}^{N}_{+})}\setminus \{0\}}\frac{{\|\nabla{u}\|_{{L^{2}}(\mathbb{R}^{N}_{+})}^{2}}}{\theta {{\|{u}\|_{L^{{2}^{*}}({\mathbb{R}}^{N}_{+})}^{2}}}+(1-\theta){{\|{u}\|_{L^{{2}_{*}}({\mathbb{R}}^{N-1})}^{2}}}}\\\label{2.1}
\end{equation}
is achieved by
 the function
\begin{equation}
{\varphi_{\varepsilon}(x)}=\bigg(\frac{\varepsilon}{{\varepsilon}^{2}+|x{'}|^{2}+|x_{N}+{\varepsilon}x_{N}^{0}|^{2}}\bigg)^{\frac{N-2}{2}},\notag
\end{equation}
where $\varepsilon>0$, $x'\in \mathbb{R}^{N-1}$ and $ x_{N}^{0}$ is a constant depending only on $\theta$ and $N$.
\end{Lem}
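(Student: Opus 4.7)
Following Theorem~3.3 of \cite{01}, I would obtain a minimizer by concentration--compactness and then identify its form via the Euler--Lagrange system and a Liouville-type classification. The case $\theta=1$ reduces to the classical Sobolev inequality on the half-space, whose minimizer is the Aubin--Talenti bubble reflected to a boundary center (corresponding to $x_N^0=0$); so I focus on $\theta\in(0,1)$. Take a minimizing sequence $\{u_n\}\subset D^{1,2}(\mathbb{R}^N_+)\setminus\{0\}$ normalized by $\theta\|u_n\|_{L^{2^*}(\mathbb{R}^N_+)}^2+(1-\theta)\|u_n\|_{L^{2_*}(\mathbb{R}^{N-1})}^2=1$, so that $\|\nabla u_n\|_{L^2}^2\to S_\theta$. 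The quotient is invariant under $u(x)\mapsto t^{(N-2)/2}u(tx+(y',0))$ for $t>0$ and $y'\in\mathbb{R}^{N-1}$; pinning the L\'evy concentration function via a suitable dilation and horizontal translation eliminates vanishing and dichotomy in Lions' concentration--compactness alternative. To rule out full concentration into a single Aubin--Talenti profile, I compare $S_\theta$ to the two concentration levels and verify the strict inequality $S_\theta<\min\bigl\{S/\theta,\,S_0/(1-\theta)\bigr\}$ by testing with a family that interpolates between an interior bubble and a boundary bubble. The weak limit $u^*\in D^{1,2}(\mathbb{R}^N_+)$ is then nontrivial and, by Brezis--Lieb, attains $S_\theta$.

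\textbf{Identification.} Replacing $u^*$ by $|u^*|$ and using the strong maximum principle gives $u^*>0$ in $\overline{\mathbb{R}^N_+}$. The Lagrange multiplier equation reads, after renormalization,
\[
-\Delta u^*=\alpha\,(u^*)^{2^*-1}\ \text{in }\mathbb{R}^N_+,\qquad\frac{\partial u^*}{\partial\nu}=\beta\,(u^*)^{2_*-1}\ \text{on }\partial\mathbb{R}^N_+,
\]
with $\alpha,\beta>0$ depending only on $\theta$, $N$ and the normalization. Moving planes in the horizontal directions, using Moser iteration to obtain decay at infinity, force $u^*$ to be radially symmetric in $x'$ about some point that we translate to the origin. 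The classification of positive solutions of this mixed double-critical Neumann system on $\mathbb{R}^N_+$ then yields
\[
u^*(x)=c\,\bigl(\varepsilon^2+|x'|^2+|x_N+\varepsilon x_N^0|^2\bigr)^{-(N-2)/2}
\]
for some $c,\varepsilon>0$ and $x_N^0\ge 0$; plugging this form into the boundary condition produces a scalar relation that fixes $x_N^0=x_N^0(\theta,N)$ uniquely.

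\textbf{Main obstacle.} The substantive input is the Liouville-type classification of positive $C^2$ solutions of the mixed double-critical system, which forces the explicit bubble form. Concentration--compactness and the horizontal moving-plane step are essentially routine once the invariances, the decay estimate, and the strict concentration inequality are in hand; the classification is the hard part and is exactly the content imported from Theorem~3.3 of \cite{01}.
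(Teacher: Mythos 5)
Your proposal is essentially the paper's own route: the paper gives no argument beyond invoking Theorem~3.3 of \cite{01}, and your blueprint likewise defers the decisive step (the classification of positive solutions of the mixed double-critical Neumann problem, which identifies the extremal as the shifted bubble $\varphi_\varepsilon$) to that same theorem, wrapping it in a standard and plausible concentration--compactness, Euler--Lagrange and symmetry scaffold. The auxiliary points you flag (the strict level inequality $S_\theta<\min\{S/\theta,\,S_0/(1-\theta)\}$, obtainable by testing with the boundary-centered bubble and the trace extremal, and the Kelvin-transform care needed for moving planes at critical growth) are routine and consistent with what the paper implicitly imports.
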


For $\tau\geq0$, set
\begin{equation*}\label{2.2}
{\varphi}_{{\varepsilon},{\tau}}(x):=k_N\bigg(\frac{{\varepsilon}}{{\varepsilon}^{2}+|x{'}|^{2}+|x_{N}+{\varepsilon}{\tau}x_{N}^{0}|^{2}}\bigg)^{\frac{N-2}{2}},
\end{equation*}
where
\begin{equation} \label{2.02}
k_N=\big({\sqrt{N(N-2)}}\big)^{\frac{N-2}{2}}\quad\mbox{and}\quad x_{N}^{0}:={\sqrt{\frac{N}{N-2}}}.
\end{equation}
Simple computations indicate that ${\varphi}_{{\varepsilon},{\tau}}$ satisfies
\begin{equation}\label{2.3}
	\left\{
	\begin{aligned}
		-\Delta {u}&=u^{{2}^{*}-1}& \ \ \mbox{in} \ \ \ {{\mathbb{R}}^{N}_{+}}, \ \ \\
		\frac{{\partial u}}{{\partial n}}&=\tau u^{{2}^{*}-1} \ & \ \ \mbox{on}\ \mathbb{R}^{N-1}.
	\end{aligned}
	\right.
\end{equation}
Let
\begin{equation*}
 {\theta}:=\frac{{{\|{{\varphi}_{{\varepsilon},{\tau}}}\|_{L^{{2}^{*}}({\mathbb{R}}^{N}_{+})}^{2^{*}-2}}}}
{{{\|{{\varphi}_{{\varepsilon},{\tau}}}\|_{L^{{2}^{*}}({\mathbb{R}}^{N}_{+})}^{2^{*}-2}}}+{\tau}{{\|{{{\varphi}_{{\varepsilon},{\tau}}}}\|_{L^{{2}_{*}}({\mathbb{R}}^{N-1})}^{2_{*}-2}}}}, \end{equation*}
which is independent of $\varepsilon$. Then ${\varphi}_{{\varepsilon},{\tau}}(x)$ reaches the infimum $S_{\theta}$.

Denote
\begin{equation*}
\Phi(u):={\frac{1}{2}}{\|\nabla{u}\|_{{L^{2}}(\mathbb{R}^{N}_{+})}^{2}}-{\frac{1 }{2^{*}}}{\|{u_{+}}\|}_{L^{{2}^{*}}({\mathbb{R}}^{N}_{+})}^{2^{*}}-{\frac{1 }{2_{*}}}{\|{u_{+}}\|}_{L^{{2}_{*}}({\mathbb{R}}^{N-1})}^{2_{*}}  \label{2.4}
\end{equation*}
and set
\begin{equation}
A:={\inf\limits_{{u\in D^{1,2}({\mathbb{R}}^{N}_{+})}\setminus\{0\}}}\ {\sup\limits_{t>0}}\ \Phi(tu).\label{2.5}
\end{equation}
\begin{Lem}\label{lem2.2}(\cite{010}, Lemma 2.3).
	The infimum $A$ is achieved by
${\varphi}_{{\varepsilon},{1}}$.
\end{Lem}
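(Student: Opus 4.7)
The plan is to reformulate $A$ as a minimization over the Nehari manifold, show that $\varphi_{\varepsilon,1}$ belongs to that manifold, and identify it as a minimizer via Lemma \ref{lem2.1}. First I would observe that, for every nonnegative $u\in D^{1,2}(\mathbb{R}^N_+)\setminus\{0\}$, the scalar map $t\mapsto \Phi(tu)$ equals $\frac{t^2}{2}a-\frac{t^{2^*}}{2^*}b-\frac{t^{2_*}}{2_*}c$ with $a=\|\nabla u\|_{L^2(\mathbb R^N_+)}^2$, $b=\|u_+\|_{L^{2^*}(\mathbb R^N_+)}^{2^*}$, $c=\|u_+\|_{L^{2_*}(\mathbb R^{N-1})}^{2_*}$, and admits a unique positive maximum at the point $t^*=t^*(u)$ characterized by $a=(t^*)^{2^*-2}b+(t^*)^{2_*-2}c$. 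Hence the ray-projection $u\mapsto t^*(u)\,u$ lands on the Nehari manifold
$$\mathcal{N}:=\bigl\{v\in D^{1,2}(\mathbb{R}^N_+)\setminus\{0\}:\ \|\nabla v\|_{L^2}^2=\|v_+\|_{L^{2^*}(\mathbb R^N_+)}^{2^*}+\|v_+\|_{L^{2_*}(\mathbb R^{N-1})}^{2_*}\bigr\},$$
and we obtain the standard identity $A=\inf_{v\in\mathcal{N}}\Phi(v)$, with the simplified expression
$$\Phi(v)\big|_{\mathcal{N}}=\frac{1}{N}\|v_+\|_{L^{2^*}(\mathbb R^N_+)}^{2^*}+\frac{1}{2(N-1)}\|v_+\|_{L^{2_*}(\mathbb R^{N-1})}^{2_*}.$$

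Next I would verify that $\varphi_{\varepsilon,1}\in\mathcal{N}$: since $\varphi_{\varepsilon,1}$ solves (\ref{2.3}) with $\tau=1$, testing this equation against $\varphi_{\varepsilon,1}$ and integrating by parts produces exactly the Nehari identity. This already gives the easy upper bound $A\leq \Phi(\varphi_{\varepsilon,1})$.

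The main step is the matching lower bound $\Phi(v)\geq\Phi(\varphi_{\varepsilon,1})$ for every $v\in\mathcal{N}$. I would apply Lemma \ref{lem2.1} at the calibrated value
$$\theta_1:=\frac{\|\varphi_{\varepsilon,1}\|_{L^{2^*}(\mathbb R^N_+)}^{2^*-2}}{\|\varphi_{\varepsilon,1}\|_{L^{2^*}(\mathbb R^N_+)}^{2^*-2}+\|\varphi_{\varepsilon,1}\|_{L^{2_*}(\mathbb R^{N-1})}^{2_*-2}},$$
for which the discussion preceding Lemma \ref{lem2.1} identifies $\varphi_{\varepsilon,1}$ as an extremizer of $S_{\theta_1}$. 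For $v\in\mathcal{N}$, Lemma \ref{lem2.1} yields
$$\|\nabla v\|^2\geq S_{\theta_1}\bigl(\theta_1\|v\|_{L^{2^*}(\mathbb R^N_+)}^{2}+(1-\theta_1)\|v\|_{L^{2_*}(\mathbb R^{N-1})}^{2}\bigr),$$
and combining with the Nehari identity and the substitution $x=\|v_+\|_{L^{2^*}}^{2^*}$, $y=\|v_+\|_{L^{2_*}}^{2_*}$ reduces the problem to the planar minimization
$$\inf\Bigl\{\frac{x}{N}+\frac{y}{2(N-1)}:\ x+y\geq S_{\theta_1}\bigl(\theta_1 x^{(N-2)/N}+(1-\theta_1)y^{(N-2)/(N-1)}\bigr),\ x,y>0\Bigr\}.$$
Comparing (\ref{2.3}) with the Euler--Lagrange equation for $S_{\theta_1}$ yields the two calibration identities $S_{\theta_1}\theta_1=\|\varphi_{\varepsilon,1}\|_{L^{2^*}}^{2^*-2}$ and $S_{\theta_1}(1-\theta_1)=\|\varphi_{\varepsilon,1}\|_{L^{2_*}}^{2_*-2}$, and a Lagrange multiplier computation at $(x_1,y_1)=(\|\varphi_{\varepsilon,1}\|_{L^{2^*}}^{2^*},\|\varphi_{\varepsilon,1}\|_{L^{2_*}}^{2_*})$ makes both first-order conditions consistent with a common multiplier $\lambda=1/2$. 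Thus $(x_1,y_1)$ is the critical point, whose value is $\Phi(\varphi_{\varepsilon,1})$.

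The principal obstacle is the second-order analysis of this planar minimization, because the constraint curve is not globally convex; one must carefully delimit the feasible region and verify that $(x_1,y_1)$ is actually the global minimum (rather than a saddle) and that the infimum $A$ is attained only at translates and rescalings of $\varphi_{\varepsilon,1}$. The complete argument, including the extremal characterization of minimizers, is carried out in Lemma 2.3 of \cite{010}.
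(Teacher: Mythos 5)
Your Nehari reformulation, the simplified expression for $\Phi$ on $\mathcal N$, and the observation that $\varphi_{\varepsilon,1}\in\mathcal N$ (which is exactly the identity $K_1-K_2-K_3=0$ in \eqref{2.7}) are correct, but they only give the easy inequality $A\le\Phi(\varphi_{\varepsilon,1})$. The genuine gap is in your lower bound: the single-$\theta$ relaxation cannot work, and the difficulty is not a missing second-order check. Grant your own calibration identities $S_{\theta_1}\theta_1=\|\varphi_{\varepsilon,1}\|_{L^{2^*}(\mathbb{R}^N_+)}^{2^*-2}$ and $S_{\theta_1}(1-\theta_1)=\|\varphi_{\varepsilon,1}\|_{L^{2_*}(\mathbb{R}^{N-1})}^{2_*-2}$. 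Then the constraint curve $x+y=S_{\theta_1}\bigl(\theta_1x^{(N-2)/N}+(1-\theta_1)y^{(N-2)/(N-1)}\bigr)$ meets the axes exactly at $(x_1,0)$ and $(0,y_1)$ with $x_1=\|\varphi_{\varepsilon,1}\|_{L^{2^*}}^{2^*}$ and $y_1=\|\varphi_{\varepsilon,1}\|_{L^{2_*}}^{2_*}$: from $x^{2/N}=S_{\theta_1}\theta_1$ one gets $x=(S_{\theta_1}\theta_1)^{N/2}=\|\varphi_{\varepsilon,1}\|_{L^{2^*}}^{2^*}$, and likewise $y=(S_{\theta_1}(1-\theta_1))^{N-1}=\|\varphi_{\varepsilon,1}\|_{L^{2_*}}^{2_*}$. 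These corner points are feasible and can be approached by feasible points with $x,y>0$ (take $y$ small and $x$ slightly larger than $x_1$), so the infimum of your planar problem is at most $\min\{x_1/N,\;y_1/(2(N-1))\}$, which is strictly smaller than the target value $\Phi(\varphi_{\varepsilon,1})=x_1/N+y_1/(2(N-1))$. Hence $(x_1,y_1)$, although a constrained critical point with multiplier $1/2$, is not the minimizer of the relaxed problem, and no delimitation of the feasible region can produce the matching lower bound from this relaxation. The information lost is essential: applying $S_{\theta_1}$ with one fixed $\theta_1$ to every $v\in\mathcal N$ is far too weak for functions whose interior/trace ratio differs from that of $\varphi_{\varepsilon,1}$; a proof along these lines would have to use the whole family $S_\theta$, $\theta\in[0,1]$, optimized in $\theta$ for each $v$ (equivalently, compare $\sup_{t>0}\Phi(tu)$ with $\sup_{t>0}\Phi(t\varphi_{\varepsilon,\tau})$ over the full range of $\tau$).

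For comparison, the paper does not prove this lemma at all: it is quoted verbatim from Lemma 2.3 of \cite{010}, and your write-up likewise ends by deferring "the complete argument" to that reference. So, beyond $A\le\Phi(\varphi_{\varepsilon,1})$, your proposal does not establish the statement, and the step it does sketch in detail is the one that fails.
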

Let 
\begin{equation}\label{2.6}
{U}_{\varepsilon}:={\varphi}_{{\varepsilon},{1}},~
K_{1}:={\|\nabla{{U}_{\varepsilon}}\|_{{L^{2}}(\mathbb{R}^{N}_{+})}^{2}},~
K_{2}:={\|{{U_{\varepsilon}}}\|}_{L^{{2}^{*}}({\mathbb{R}}^{N}_{+})}^{2^{*}},~
K_{3}:={\|{{U_{\varepsilon}}}\|}_{L^{{2}_{*}}({\mathbb{R}}^{N-1})}^{2_{*}}.
\end{equation}
It follows from Lemma \ref{lem2.2} and \eqref{2.3} that
\begin{equation}\label{2.7}
~A={\frac{K_{1}}{2}}-{\frac{K_{2} }{2^{*}}}-{\frac{K_{3} }{2_{*}}}, \quad\\
~K_{1}-K_{2}-K_{3}=0.
\end{equation}

On the other hand, the existence of the embeddings $X \hookrightarrow {L^{2^{*}}_{K}}({\mathbb{R}}^{N}_{+})$ and $X \hookrightarrow {L^{2_{*}}_{K}}({\mathbb{R}}^{N-1})$ allows us to define
\begin{equation}
S^{K}:=\inf_{u\in X\setminus \{0\}}\frac{{\|\nabla{u}\|_{{L_{K}^{2}}(\mathbb{R}^{N}_{+})}^{2}}} {\|{u}\|_{L^{{2}_{*}}_{K}(\mathbb{R}^{N-1})}^{2}}\notag
\end{equation}
and
\begin{equation}
S_{\theta}^{K}:=\inf_{u\in X\setminus \{0\}}\frac{{\|\nabla{u}\|_{{L_{K}^{2}}(\mathbb{R}^{N}_{+})}^{2}}}{\theta {{\|{u}\|_{L^{{2}^{*}}_{K}({\mathbb{R}}^{N}_{+})}^{2}}}+(1-\theta){{\|{u}\|_{L^{{2}_{*}}_{K}({\mathbb{R}}^{N-1})}^{2}}}},\notag
\end{equation}
where $\theta \in (0,1]$. It is worth mentioning that $S^{K}=S_{0}$ and  $S_{\theta}^{K}=S_{\theta}$ for any $\theta \in (0,1]$ (see  \cite{0} \cite{2021}).

Proceeding as done in  \cite{2021} or \cite{116}, we can easily verify that $I^a_{\lambda, \mu}$ has a Mountain Pass geometry structure.

\begin{Lem}\label{lem2.5}
	Let $N\geq3, a\in\{0,1\}$. The functional $I^a_{\lambda, \mu}$ has a Mountain Pass structure in each of the following cases:
\vskip 0.2cm
	
	$(1)$  $ \lambda\in(0, \frac{N}{2})$ and $\mu=0$;
\vskip 0.2cm
	
	$(2)$ $ \lambda\in[0, \frac{N}{2})$, $\mu>0$ and $q\in(2,2_*)$;
\vskip 0.2cm
	
$(3)$ $\mu \in (0, \mu_1 -\frac {2\mu_1}{N}\lambda )$, $\lambda \in [0, \frac {N} {2})$ if $q=2$,
\vskip 0.2cm

\noindent
where $\mu_1$ is defined by \eqref{2}.
\end{Lem}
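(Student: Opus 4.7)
The standard Mountain Pass geometry requires showing that $I^a_{\lambda,\mu}(0)=0$, that there exist $\rho,\alpha>0$ with $I^a_{\lambda,\mu}(u)\ge \alpha$ whenever $\|u\|=\rho$, and that there exists $e\in X$ with $\|e\|>\rho$ and $I^a_{\lambda,\mu}(e)\le 0$. Since $I^a_{\lambda,\mu}(0)=0$ is immediate, the plan reduces to verifying the other two conditions in each of the three cases. I would treat the local positivity near the origin and the existence of a ``deep well'' separately, and handle all three cases under a unified scheme, because the only substantive difference between them is the way the quadratic part $\tfrac12\|u\|^2-\tfrac{\lambda}{2}\|u_+\|_{L^2_K(\mathbb{R}^N_+)}^2-\tfrac{\mu}{q}\|u_+\|_{L^q_K(\mathbb{R}^{N-1})}^q$ can be bounded below.

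For the local positivity, the key step is to produce a strictly positive coefficient $c=c(\lambda,\mu,N)>0$ such that
\begin{equation*}
\tfrac12\|u\|^2-\tfrac{\lambda}{2}\|u_+\|_{L^2_K(\mathbb{R}^N_+)}^2-\tfrac{\mu}{q}\|u_+\|_{L^q_K(\mathbb{R}^{N-1})}^q\;\ge\;c\,\|u\|^2-o(\|u\|^2)\quad\text{as }\|u\|\to 0.
\end{equation*}
In cases (1) and (2) I would simply invoke the variational characterization \eqref{1} to get $\tfrac12\|u\|^2-\tfrac{\lambda}{2}\|u_+\|_{L^2_K(\mathbb{R}^N_+)}^2\ge \tfrac12(1-\tfrac{2\lambda}{N})\|u\|^2$, which is strictly positive precisely because $\lambda<\tfrac{N}{2}$; the extra subcritical term in case (2) is absorbed by $o(\|u\|^2)$ since $q>2$ and $X\hookrightarrow L^q_K(\mathbb{R}^{N-1})$ is continuous. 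In case (3) I additionally need the bound $\|u_+\|_{L^2_K(\mathbb{R}^{N-1})}^2\le \mu_1^{-1}\|u\|^2$ from \eqref{2}, which combined with \eqref{1} yields the coefficient $\tfrac12(1-\tfrac{2\lambda}{N}-\tfrac{\mu}{\mu_1})$; this is exactly where the hypothesis $\mu<\mu_1-\tfrac{2\mu_1}{N}\lambda$ becomes necessary and sufficient for positivity. Once such a $c>0$ is in hand, the critical terms $\tfrac{a}{2^*}\|u_+\|_{L^{2^*}_K}^{2^*}+\tfrac{1}{2_*}\|u_+\|_{L^{2_*}_K}^{2_*}$ are controlled via the continuous embeddings $X\hookrightarrow L^{2^*}_K(\mathbb{R}^N_+)$ and $X\hookrightarrow L^{2_*}_K(\mathbb{R}^{N-1})$, giving $I^a_{\lambda,\mu}(u)\ge c\|u\|^2-C\|u\|^{2^*}-C\|u\|^{2_*}$, which is bounded below by a positive $\alpha$ on a small enough sphere $\|u\|=\rho$.

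For the deep well, I would fix any $\varphi\in C_c^\infty(\overline{\mathbb{R}^N_+})$ with $\varphi\ge 0$ and $\varphi\not\equiv 0$ on $\partial\mathbb{R}^N_+$, and evaluate
\begin{equation*}
I^a_{\lambda,\mu}(t\varphi)=\tfrac{t^2}{2}\bigl(\|\varphi\|^2-\lambda\|\varphi\|_{L^2_K}^2\bigr)-\tfrac{at^{2^*}}{2^*}\|\varphi\|_{L^{2^*}_K}^{2^*}-\tfrac{t^{2_*}}{2_*}\|\varphi\|_{L^{2_*}_K}^{2_*}-\tfrac{\mu t^q}{q}\|\varphi\|_{L^q_K}^q.
\end{equation*}
Since $2_*>2$ and (in case of $a=1$) $2^*>2$, and since $q<2_*$, the critical boundary term dominates as $t\to\infty$, so $I^a_{\lambda,\mu}(t\varphi)\to -\infty$; choosing $e=t_0\varphi$ for $t_0$ large gives the required point.

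The main obstacle is case (3): the subcritical term is genuinely of the same quadratic order as the $\lambda$ term, so the local positivity estimate fails without a sharp version of \eqref{2}, and one cannot simply absorb it as a lower-order perturbation. This is precisely why the strict inequality $\mu<\mu_1-\tfrac{2\mu_1}{N}\lambda$ appears, and it should be emphasized that the argument degenerates exactly at the boundary curve $\eta$ of Figure 1. All the remaining verifications are routine consequences of the continuous embeddings recalled at the start of Section \ref{S2}.
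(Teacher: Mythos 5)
Your proposal is correct and follows exactly the argument the paper has in mind: the paper itself gives no proof of Lemma \ref{lem2.5}, stating only that one proceeds as in \cite{2021} and \cite{116}, and that standard route is precisely yours — bounding the quadratic part from below via the characterizations \eqref{1} and \eqref{2} (which is where the conditions $\lambda<\frac N2$ and $\mu<\mu_1-\frac{2\mu_1}{N}\lambda$ enter, the latter matching the curve $\eta$ of Figure 1), absorbing the critical and subcritical superquadratic terms through the continuous embeddings of $X$, and sending $I^a_{\lambda,\mu}(t\varphi)\to-\infty$ along a nonnegative $\varphi$ with nontrivial trace. No gaps; your treatment of the borderline case $q=2$ is the right one.
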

As a consequence of Lemma \ref{lem2.5}, we get that
\begin{equation*}
c^a_{\lambda, \mu}:={\inf\limits_{\gamma \in \Gamma}}{\max\limits_{t\in[0,1]}}\ I^a_{\lambda, \mu}(\gamma(t))>0,
\end{equation*}
where
\begin{equation}
\Gamma:=\{\gamma \in C([0,1],X):\gamma(0)=0, I^a_{\lambda, \mu}(\gamma(1))<0\}.\notag
\end{equation}

The following Lemmas provide the intervals where the $(PS)$ condition holds under different specific situations for $I^a_{\lambda, \mu}(u)$.

\begin{Lem}\label{lem2.6}
Under the assumptions of Lemma \ref{lem2.5},
the functional $I^1_{\lambda, \mu}$ satisfies the $(PS)_{c}$ condition at the level $c^1_{\lambda, \mu}$ if
 \begin{equation}
c^1_{\lambda, \mu}<A,   \quad\\ \label{2.11}
\end{equation}
where $A$ is given by \eqref{2.5}.
\end{Lem}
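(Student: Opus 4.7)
The plan is to carry out a Brezis--Nirenberg style concentration--compactness argument, adapted to the fact that $I^{1}_{\lambda,\mu}$ carries two critical nonlinearities simultaneously: the bulk exponent $2^{*}$ and the trace exponent $2_{*}$. Let $\{u_n\}\subset X$ be a $(PS)_{c}$ sequence with $c=c^{1}_{\lambda,\mu}$. The first step is boundedness: I would form the combination $I^{1}_{\lambda,\mu}(u_n)-\sigma\langle (I^{1}_{\lambda,\mu})'(u_n),u_n\rangle$ for an appropriate $\sigma$ (either $1/2^{*}$ in case $(1)$ of Lemma~\ref{lem2.5} or $1/q$ in cases $(2)$--$(3)$) and exploit the weighted Poincar\'e-type inequalities \eqref{1}--\eqref{2} to extract a coercive control of $\|u_n\|$ under the standing assumptions $\lambda<N/2$ and, when $q=2$, $\mu<\mu_{1}(1-2\lambda/N)$.

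Next, up to a subsequence, $u_n\rightharpoonup u$ weakly in $X$. Thanks to the compactness of the embeddings $X\hookrightarrow L^{r}_{K}(\mathbb{R}^{N}_{+})$ for $r\in[2,2^{*})$ and $X\hookrightarrow L^{s}_{K}(\mathbb{R}^{N-1})$ for $s\in[2,2_{*})$, all linear and subcritical terms pass to the limit, so a standard test-function argument shows that $u$ is a critical point of $I^{1}_{\lambda,\mu}$, hence nonnegative by \eqref{1}. Testing $(I^{1}_{\lambda,\mu})'(u)=0$ against $u$ and using $\mu\geq 0$ together with $q\geq 2$ yields
\[
I^{1}_{\lambda,\mu}(u)=\Big(\tfrac{1}{2}-\tfrac{1}{2^{*}}\Big)\|u_{+}\|^{2^{*}}_{L^{2^{*}}_{K}(\mathbb{R}^{N}_{+})}+\Big(\tfrac{1}{2}-\tfrac{1}{2_{*}}\Big)\|u_{+}\|^{2_{*}}_{L^{2_{*}}_{K}(\mathbb{R}^{N-1})}+\mu\Big(\tfrac{1}{2}-\tfrac{1}{q}\Big)\|u_{+}\|^{q}_{L^{q}_{K}(\mathbb{R}^{N-1})}\geq 0.
\]

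Set $v_n:=u_n-u$, so $v_n\rightharpoonup 0$ in $X$. Applying the Brezis--Lieb lemma to the two critical weighted norms and using the strong convergence in the subcritical norms gives the splittings
\[
\|v_n\|^{2}-\|v_{n,+}\|^{2^{*}}_{L^{2^{*}}_{K}(\mathbb{R}^{N}_{+})}-\|v_{n,+}\|^{2_{*}}_{L^{2_{*}}_{K}(\mathbb{R}^{N-1})}=o(1),
\]
\[
c-I^{1}_{\lambda,\mu}(u)=\tfrac{1}{2}\|v_n\|^{2}-\tfrac{1}{2^{*}}\|v_{n,+}\|^{2^{*}}_{L^{2^{*}}_{K}(\mathbb{R}^{N}_{+})}-\tfrac{1}{2_{*}}\|v_{n,+}\|^{2_{*}}_{L^{2_{*}}_{K}(\mathbb{R}^{N-1})}+o(1).
\]
Assume, for contradiction, that $\|v_n\|\not\to 0$. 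Along a further subsequence, $\|v_n\|^{2}\to\ell>0$, $\|v_{n,+}\|^{2^{*}}_{L^{2^{*}}_{K}}\to\ell_{1}$, $\|v_{n,+}\|^{2_{*}}_{L^{2_{*}}_{K}}\to\ell_{2}$ with $\ell=\ell_{1}+\ell_{2}$, and the residual energy is
\[
m:=\Big(\tfrac{1}{2}-\tfrac{1}{2^{*}}\Big)\ell_{1}+\Big(\tfrac{1}{2}-\tfrac{1}{2_{*}}\Big)\ell_{2}=\tfrac{1}{N}\ell_{1}+\tfrac{1}{2(N-1)}\ell_{2}>0.
\]
Now invoke the Nehari trick: the function $t\mapsto\frac{t^{2}}{2}\|v_n\|^{2}-\frac{t^{2^{*}}}{2^{*}}\|v_{n,+}\|^{2^{*}}_{L^{2^{*}}_{K}}-\frac{t^{2_{*}}}{2_{*}}\|v_{n,+}\|^{2_{*}}_{L^{2_{*}}_{K}}$ admits a unique positive maximizer $t_n$, and the first constraint forces $t_n\to 1$. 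The identification $S^{K}_{\theta}=S_{\theta}$ then allows us to compare the maximum value with the unweighted min--max threshold $A$ of \eqref{2.5}, so $\frac{t_n^{2}}{2}\|v_n\|^{2}-\frac{t_n^{2^{*}}}{2^{*}}\|v_{n,+}\|^{2^{*}}_{L^{2^{*}}_{K}}-\frac{t_n^{2_{*}}}{2_{*}}\|v_{n,+}\|^{2_{*}}_{L^{2_{*}}_{K}}\geq A$. Passing to the limit yields $m\geq A$, and combining with the nonnegativity of $I^{1}_{\lambda,\mu}(u)$ gives $c\geq A$, contradicting \eqref{2.11}. Hence $\|v_n\|\to 0$ and $u_n\to u$ strongly in $X$.

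The main obstacle is the final lower bound $m\geq A$: the coexistence of two critical nonlinearities rules out any appeal to a single Sobolev-type inequality, and one must work with the joint threshold $A$ from \eqref{2.5}, using the identification $S_{\theta}^{K}=S_{\theta}$ to transfer the weighted concentration analysis into the unweighted half-space framework where $A$ is defined. A secondary technical point is justifying the Brezis--Lieb splitting on the half-space with the exponential weight $K$; this follows from the Lieb--Brezis lemma applied scale-by-scale once one observes that $v_n$ is bounded in $X$ and almost everywhere convergent along a subsequence.
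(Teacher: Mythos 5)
Your skeleton is the standard one that the paper itself omits (it defers to Lemma 3.5 of \cite{0}): boundedness of the $(PS)_{c^1_{\lambda,\mu}}$ sequence, weak convergence to a nonnegative critical point $u$ with $I^1_{\lambda,\mu}(u)\geq 0$, a Brezis--Lieb splitting of the two critical terms for $v_n=u_n-u$, and the conclusion that a nonvanishing remainder forces $c^1_{\lambda,\mu}\geq A$, contradicting \eqref{2.11}. However, two steps do not work as written. In the boundedness step, your choice $\sigma=1/2^{*}$ in case $(1)$ of Lemma \ref{lem2.5} gives the boundary critical term the coefficient $\tfrac{1}{2^{*}}-\tfrac{1}{2_{*}}<0$, so $I^1_{\lambda,0}(u_n)-\tfrac{1}{2^{*}}\langle (I^1_{\lambda,0})'(u_n),u_n\rangle$ does not dominate a positive multiple of $\|u_n\|^{2}$; you should take $\sigma=1/2_{*}$ (any $\sigma\in[1/2_{*},1/2)$ works). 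Likewise in case $(3)$ the choice $\sigma=1/q=1/2$ annihilates the whole quadratic part, so it yields no control of $\|u_n\|$ by itself; taking $\sigma=1/2_{*}$ and using \eqref{1}, \eqref{2} together with $\mu<\mu_1\bigl(1-\tfrac{2\lambda}{N}\bigr)$ gives the coercive bound $\bigl(\tfrac12-\tfrac1{2_*}\bigr)\bigl(1-\tfrac{2\lambda}{N}-\tfrac{\mu}{\mu_1}\bigr)\|u_n\|^2\leq C(1+\|u_n\|)$. Your choice $\sigma=1/q$ is correct only in case $(2)$.

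The second point is the key inequality $\sup_{t>0}\bigl[\tfrac{t^2}{2}\|v_n\|^2-\tfrac{t^{2^*}}{2^*}\|v_{n,+}\|^{2^*}_{L^{2^*}_K(\mathbb{R}^N_+)}-\tfrac{t^{2_*}}{2_*}\|v_{n,+}\|^{2_*}_{L^{2_*}_K(\mathbb{R}^{N-1})}\bigr]\geq A$: quoting $S^{K}_{\theta}=S_{\theta}$ is not by itself sufficient, because with two critical terms the maximum in $t$ is not a function of a single $\theta$-quotient, so equality of the best constants for each fixed $\theta$ does not compare your weighted supremum with the level $A$ of \eqref{2.5}. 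The clean way (and the mechanism behind the cited identification) is the substitution $w_n:=K^{1/2}v_n\in D^{1,2}(\mathbb{R}^N_+)$: integrating by parts, with the boundary term vanishing since $x\cdot n=0$ on $\partial\mathbb{R}^N_+$, one gets $\|v_n\|^{2}=\|\nabla w_n\|^{2}_{L^{2}(\mathbb{R}^N_+)}+\tfrac{N}{4}\|w_n\|^{2}_{L^{2}(\mathbb{R}^N_+)}+\tfrac{1}{16}\int_{\mathbb{R}^N_+}|x|^{2}w_n^{2}\,dx\geq\|\nabla w_n\|^{2}_{L^{2}(\mathbb{R}^N_+)}$, while $K\geq 1$ gives $\|v_{n,+}\|^{2^*}_{L^{2^*}_K(\mathbb{R}^N_+)}\leq\|w_{n,+}\|^{2^*}_{L^{2^*}(\mathbb{R}^N_+)}$ and $\|v_{n,+}\|^{2_*}_{L^{2_*}_K(\mathbb{R}^{N-1})}\leq\|w_{n,+}\|^{2_*}_{L^{2_*}(\mathbb{R}^{N-1})}$; hence the quantity in brackets is at least $\Phi(tw_n)$ for every $t>0$, and its supremum is at least $A$ by \eqref{2.5}. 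With these two corrections, the rest of your argument ($t_n\to 1$, the limiting maximum equals $\tfrac{1}{N}\ell_1+\tfrac{1}{2(N-1)}\ell_2=m$, and $c^1_{\lambda,\mu}=I^1_{\lambda,\mu}(u)+m\geq A$) closes correctly and proves the lemma.
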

The proof of this Lemma is similar to Lemma 3.5 in \cite{0}, we omit the details here. Naturally, we also have a local compactness result for
the functional $I^0_{\lambda, \mu}$ (see proposition 5.1 in  \cite{2021}). 

\begin{Lem}\label{lem2.7}
Under the hypotheses of Lemma \ref{lem2.5},
the functional $I^0_{\lambda, \mu}(u)$ satisfies the $(PS)_{c}$ condition at the level $c^0_{\lambda, \mu}$ if
 \begin{equation}
c^0_{\lambda, \mu}<\frac{1}{2(N-1)}S_0^{N-1},   \quad\\ \label{2.12}
\end{equation}
where $S_0$ is given by \eqref{2.8}.
\end{Lem}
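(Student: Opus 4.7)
The plan is the standard Brezis--Nirenberg concentration--compactness argument adapted to the boundary-critical Neumann setting, with the Sobolev trace constant $S_0$ playing the role of the compactness threshold. Let $(u_n) \subset X$ be a $(PS)_{c^0_{\lambda,\mu}}$ sequence. I would first bound $(u_n)$ in $X$ by combining $I^0_{\lambda,\mu}(u_n)-\frac{1}{2_*}\langle(I^0_{\lambda,\mu})'(u_n),u_n\rangle = c^0_{\lambda,\mu} + o(1) + o(\|u_n\|)$: the non-compact critical boundary term cancels, leaving
\begin{equation*}
\frac{1}{2(N-1)}\|u_n\|^2 - \frac{\lambda}{2(N-1)}\|u_{n,+}\|^2_{L^2_K(\mathbb{R}^N_+)} + \Big(\frac{1}{q}-\frac{1}{2_*}\Big)\mu\|u_{n,+}\|^q_{L^q_K(\mathbb{R}^{N-1})} = c^0_{\lambda,\mu} + o(1) + o(\|u_n\|).
\end{equation*}
Using \eqref{1} to estimate $\|u_{n,+}\|^2_{L^2_K(\mathbb{R}^N_+)} \leq \frac{2}{N}\|u_n\|^2$, and, when $q=2$, \eqref{2} to estimate $\|u_{n,+}\|^2_{L^2_K(\mathbb{R}^{N-1})} \leq \mu_1^{-1}\|u_n\|^2$, the coefficient of $\|u_n\|^2$ becomes strictly positive in each of the three regimes of Lemma \ref{lem2.5} (this is precisely why the restriction $\mu < \mu_1(1-2\lambda/N)$ is imposed when $q=2$), which gives boundedness. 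Up to a subsequence, $u_n \rightharpoonup u$ weakly in $X$, and the compact embeddings $X \hookrightarrow L^r_K(\mathbb{R}^N_+)$ for $r \in [2, 2^*)$ and $X \hookrightarrow L^s_K(\mathbb{R}^{N-1})$ for $s \in [2, 2_*)$ recalled in the introduction give strong convergence in all the subcritical spaces involved. A routine limit passage in $\langle(I^0_{\lambda,\mu})'(u_n),\varphi\rangle = o(1)$ identifies the weak limit $u$ as a critical point of $I^0_{\lambda,\mu}$.

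Next, set $v_n := u_n - u$. By the Brezis--Lieb lemma,
\begin{equation*}
\|u_n\|^2 = \|v_n\|^2 + \|u\|^2 + o(1), \qquad \|u_{n,+}\|^{2_*}_{L^{2_*}_K(\mathbb{R}^{N-1})} = \|v_{n,+}\|^{2_*}_{L^{2_*}_K(\mathbb{R}^{N-1})} + \|u_+\|^{2_*}_{L^{2_*}_K(\mathbb{R}^{N-1})} + o(1),
\end{equation*}
while the $L^2_K$ and $L^q_K$ pieces of $u_n$ converge strongly to those of $u$. Subtracting $\langle(I^0_{\lambda,\mu})'(u),u\rangle = 0$ from $\langle(I^0_{\lambda,\mu})'(u_n),u_n\rangle = o(1)$ therefore collapses to $\|v_n\|^2 = \|v_{n,+}\|^{2_*}_{L^{2_*}_K(\mathbb{R}^{N-1})} + o(1)$. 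Writing $b := \lim_n \|v_n\|^2$, the identification $S^K = S_0$ yields $\|v_n\|^2 \geq S_0 \|v_{n,+}\|^2_{L^{2_*}_K(\mathbb{R}^{N-1})}$, hence $b \geq S_0 b^{2/2_*}$, forcing either $b = 0$ or $b \geq S_0^{2_*/(2_*-2)} = S_0^{N-1}$.

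Finally, applying the Brezis--Lieb splitting to $I^0_{\lambda,\mu}(u_n) = c^0_{\lambda,\mu} + o(1)$ and using the strong subcritical convergence gives
\begin{equation*}
c^0_{\lambda,\mu} = I^0_{\lambda,\mu}(u) + \Big(\frac{1}{2}-\frac{1}{2_*}\Big) b + o(1) = I^0_{\lambda,\mu}(u) + \frac{1}{2(N-1)}\,b + o(1),
\end{equation*}
and the same algebraic cancellation used for boundedness, applied now to the critical point $u$, yields $I^0_{\lambda,\mu}(u) \geq 0$ in each regime of Lemma \ref{lem2.5}. Consequently, if $b > 0$, then $c^0_{\lambda,\mu} \geq \frac{1}{2(N-1)} S_0^{N-1}$, contradicting \eqref{2.12}; therefore $b = 0$ and $v_n \to 0$ strongly in $X$, which is the desired compactness. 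The main obstacle is the simultaneous handling of the lower-order $\lambda$ and $\mu$ perturbations in all three parameter regimes: the Mountain Pass constraints in Lemma \ref{lem2.5} are exactly what keep both the coercivity estimate for boundedness and the sign condition $I^0_{\lambda,\mu}(u) \geq 0$ at the limit point simultaneously valid, and this is the only place where those restrictions enter the argument.
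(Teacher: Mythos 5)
Your overall architecture is the right one, and in fact it is the same standard argument the paper relies on (the paper does not prove Lemma \ref{lem2.7} itself but cites Proposition 5.1 of \cite{2021}): bound the (PS) sequence, extract a weak limit which is a critical point, split with Brezis--Lieb, use $S^K=S_0$ to force the lost mass $b$ to be either $0$ or at least $S_0^{N-1}$, and conclude from $c^0_{\lambda,\mu}=I^0_{\lambda,\mu}(u)+\frac{b}{2(N-1)}$ together with $I^0_{\lambda,\mu}(u)\ge 0$ and \eqref{2.12}.

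There is, however, a concrete sign error that breaks two of your steps in the regime $\mu>0$, $2<q<2_*$. Computing $I^0_{\lambda,\mu}(u_n)-\frac{1}{2_*}\langle (I^0_{\lambda,\mu})'(u_n),u_n\rangle$, the $L^q_K(\mathbb{R}^{N-1})$ term comes out as $-\mu\big(\tfrac1q-\tfrac{1}{2_*}\big)\|u_{n,+}\|^q_{L^q_K(\mathbb{R}^{N-1})}$, i.e.\ with the opposite sign to the one in your display. For $q=2$ this negative term is absorbed using \eqref{2} exactly under the restriction $\mu<\mu_1(1-2\lambda/N)$, as you say, and for $\mu=0$ it is absent; but for $2<q<2_*$ it scales like $\|u_n\|^q$ with $q>2$ and cannot be dominated by the quadratic coercive part, so neither the boundedness of $(u_n)$ nor the inequality $I^0_{\lambda,\mu}(u)\ge 0$ follows from this identity as written. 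The standard repair is to change the multiplier in that regime: from $I^0_{\lambda,\mu}(u_n)-\frac1q\langle (I^0_{\lambda,\mu})'(u_n),u_n\rangle$ the critical boundary term appears with the favorable coefficient $\big(\tfrac1q-\tfrac{1}{2_*}\big)\ge 0$ and the $L^q$ term cancels, giving $\big(\tfrac12-\tfrac1q\big)\big(1-\tfrac{2\lambda}{N}\big)\|u_n\|^2\le c^0_{\lambda,\mu}+o(1)+o(\|u_n\|)$ by \eqref{1}; similarly $I^0_{\lambda,\mu}(u)=I^0_{\lambda,\mu}(u)-\frac1q\langle (I^0_{\lambda,\mu})'(u),u\rangle\ge 0$ (or use the multiplier $\tfrac12$, which gives $I^0_{\lambda,\mu}(u)=\big(\tfrac12-\tfrac{1}{2_*}\big)\|u_+\|^{2_*}_{L^{2_*}_K(\mathbb{R}^{N-1})}+\mu\big(\tfrac12-\tfrac1q\big)\|u_+\|^{q}_{L^{q}_K(\mathbb{R}^{N-1})}\ge 0$). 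With this correction the rest of your argument goes through and coincides with the cited proof.
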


We devote the rest of the paper to verify that \eqref{2.11} or \eqref{2.12} holds under different conditions on $\lambda, \ \mu$ and $q$, and then complete the proof of Theorem \ref{Th1.1} and Theorem \ref{Th1.2}. To this end, we define the functions
\begin{eqnarray}
&& u_{\varepsilon}(x):=K(x)^{-\frac{1}{2}}\phi(x)U_{{\varepsilon}}(x),\label{b1}\\
&& \hat{u}_\varepsilon(x):=K(x)^{-\frac{1}{2}}\phi(x)\hat{U}_{{\varepsilon}}(x),\label{b2}
\end{eqnarray}
where the cut-off function $\phi \in C_0^{\infty}({\mathbb{R}}^{N}_{+},[0,1])$, $\phi \equiv 1$ in $B_{1}(0)\cap {\mathbb{R}}^{N}_{+}$, $\phi\equiv 0$ in $\overline{{{\mathbb{R}}^{N}_{+}}}\setminus B_{2}(0)$,
$\hat U_\varepsilon$ and ${U}_{{\varepsilon}}$ are defined in \eqref{2.9} and \eqref{2.6}. Then we have the following estimates for $u_\varepsilon $ (See Lemmas 4.1-4.3 in \cite{0} for details).
\begin{Lem}\label{lem2.8}
Suppose that $N\geq3$. As $\varepsilon\rightarrow0$, we have
\begin{equation*}
 \begin{aligned}
&{\|u_{{\varepsilon}}\|}^{2}=
\begin{cases}
K_{1}+\alpha_{N}{{\varepsilon}}^{2}+o({{\varepsilon}}^{2}),~&~~N\geq 5,\\[1mm]
K_{1}+\frac{k_{4}^{2}\omega_4}{2}{{\varepsilon}}^{2}|\ln{\varepsilon}|+O({\varepsilon}^{2}),~&~~N=4,\\[1mm]
K_1+O({\varepsilon}),~&~~N=3,\\[1mm]
\end{cases}\\
&{\|u_{{\varepsilon}}\|}_{L^{{2}^{*}}_{K}({\mathbb{R}}^{N}_{+})}^{2^{*}}=K_{2}-\beta_{N}{{\varepsilon}}^{2}+o({\varepsilon}^{2}),\\
&{\|u_{{\varepsilon}}\|}_{L^{{2}_{*}}_{K}({\mathbb{R}}^{N-1})}^{2_{*}}
=
\begin{cases}
K_{3}-\gamma_{N}{{\varepsilon}}^{2}+o({\varepsilon}^{2}),~&~~N\geq4,\\[1mm]
K_{3}+O({\varepsilon}^2|\ln\varepsilon|),~&~~N=3,\\[1mm]
\end{cases}
 \end{aligned}
\end{equation*}
where $k_N, K_1, K_2, K_3$ are given by \eqref{2.02}, \eqref{2.6}, $\omega_{4}$ is the area of unit sphere in ${\mathbb{R}^{4}}$ and
\begin{eqnarray}
&&\alpha_{N}=\frac{(N-2)k_{N}^{2}}{2}\int_{\mathbb{R}^{N}_{+}}\frac{|y{'}|^{2}+{y_{N}(y_{N}+x_{N}^{0})}}
{{{{(1+|y{'}|^{2}+|y_{N}+x_{N}^{0}|^{2})^{N-1}}}}}dy, \label{a1}\\
&&\beta_{N}=\frac{k_{N}^{2^{*}}}{2(N-2)}
\int_{\mathbb{R}^{N}_{+}}\frac{|y|^{2}}
{{{{(1+|y{'}|^{2}+|y_{N}+x_{N}^{0}|^{2})^{N}}}}}dy,\label{a2}\\
&&\gamma_N=\frac{k_{N}^{2_{*}}}{4(N-2)}\int_{\mathbb{R}^{N-1}}\frac{|y{'}|^{2}}
{{{{(1+|y{'}|^{2}+|x_{N}^{0}|^{2})^{N-1}}}}}dy{'}.\label{a3}
\end{eqnarray}
\end{Lem}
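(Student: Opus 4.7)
Because $K(x)^{-1/2}=e^{-|x|^2/8}$, substituting $u_\varepsilon=K^{-1/2}\phi U_\varepsilon$ converts every weighted integral into a purely Euclidean one with an explicit exponential factor:
\begin{align*}
K|\nabla u_\varepsilon|^2&=\bigl|-\tfrac{x}{4}\phi U_\varepsilon+\nabla(\phi U_\varepsilon)\bigr|^2,\\
K|u_\varepsilon|^{2^*}&=e^{-|x|^2/(2(N-2))}\phi^{2^*}U_\varepsilon^{2^*},\qquad K(x',0)|u_\varepsilon|^{2_*}=e^{-|x'|^2/(4(N-2))}\phi^{2_*}U_\varepsilon^{2_*}.
\end{align*}
The asymptotic analysis then proceeds by rescaling $y=x/\varepsilon$, using $U_\varepsilon(\varepsilon y)=\varepsilon^{-(N-2)/2}U_1(y)$ with $U_1(y)=k_N(1+|y'|^2+(y_N+x_N^0)^2)^{-(N-2)/2}$, and Taylor-expanding the exponential weights.

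\textbf{The $X$-norm.} Expanding the square in the first identity, the cross term $-\tfrac12\!\int x\cdot(\phi U_\varepsilon)\nabla(\phi U_\varepsilon)\,dx$ integrates by parts to $\tfrac{N}{4}\!\int\phi^2U_\varepsilon^2\,dx$, since $x\cdot\nu=-x_N$ vanishes on $\partial\mathbb R^N_+$. The remaining $\int|\nabla(\phi U_\varepsilon)|^2\,dx$ is handled by another integration by parts in which the Euler--Lagrange equation \eqref{2.3} satisfied by $U_\varepsilon$ is inserted; the $\phi\nabla\phi$ mixed term cancels cleanly, leaving $\int\phi^2U_\varepsilon^{2^*}\,dx+\int_{\mathbb R^{N-1}}\phi^2U_\varepsilon^{2_*}\,dx'+\int U_\varepsilon^2|\nabla\phi|^2\,dx$. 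The first two of these reproduce $K_2+K_3=K_1$ modulo tail errors $O(\varepsilon^{N-2})$ from the cut-off, and the third is also $O(\varepsilon^{N-2})$ since $U_\varepsilon=O(\varepsilon^{(N-2)/2})$ on the annulus $\{1\le|x|\le2\}$. In the remaining piece $\int(\tfrac{|x|^2}{16}+\tfrac{N}{4})\phi^2U_\varepsilon^2\,dx$, rescaling shows the $|x|^2/16$ contribution is $O(\varepsilon^4)$, while the $N/4$ contribution gives the coefficient $\tfrac{N}{4}\int U_1^2\,dy$; one last integration by parts against $\partial_{y_N}R^{-(N-2)}=-2(N-2)(y_N+x_N^0)R^{-(N-1)}$, using $(x_N^0)^2=N/(N-2)$ to absorb the boundary flux, rewrites this coefficient in the form $\alpha_N$ of \eqref{a1}.

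\textbf{Critical-norm expansions.} Inserting $e^{-|x|^2/(2(N-2))}=1-\tfrac{|x|^2}{2(N-2)}+O(|x|^4)$ and rescaling yields $\|u_\varepsilon\|_{L^{2^*}_K}^{2^*}=K_2-\tfrac{\varepsilon^2}{2(N-2)}\int|y|^2U_1^{2^*}\,dy+o(\varepsilon^2)$, and direct comparison with \eqref{a2} identifies the coefficient as $\beta_N$ (this integral converges for every $N\ge3$). An identical Taylor argument on the boundary, using $e^{-|x'|^2/(4(N-2))}$, yields \eqref{a3} whenever $\int|y'|^2U_1(y',0)^{2_*}\,dy'$ is finite, i.e.\ for $N\ge4$.

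\textbf{Low-dimensional cases.} The main obstacle is that in the borderline dimensions the relevant integrals diverge and must be truncated at the scale $|y|\lesssim\varepsilon^{-1}$ imposed by the cut-off $\phi$. For $N=4$, the behaviour $U_1^2\sim k_4^2|y|^{-4}$ at infinity gives $\int_{|y|\le c/\varepsilon}U_1^2\,dy=\tfrac{k_4^2\omega_4}{2}|\ln\varepsilon|+O(1)$, producing the $\tfrac{k_4^2\omega_4}{2}\varepsilon^2|\ln\varepsilon|$ term. For $N=3$ the same truncated integral grows like $\varepsilon^{-1}$, leaving only an $O(\varepsilon)$ remainder in the $X$-norm expansion; simultaneously the trace integral $\int|y'|^2U_1(y',0)^{2_*}\,dy'$ diverges logarithmically, producing the $O(\varepsilon^2|\ln\varepsilon|)$ term in $\|u_\varepsilon\|_{L^{2_*}_K}^{2_*}$. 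Carefully matching divergent portions against the stated coefficients and verifying that all residuals fall into the advertised $o(\cdot)$ classes is the most technically delicate part of the argument.
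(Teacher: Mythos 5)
Your proposal is sound and follows the same global scheme as the paper (which defers the detailed proof to Lemmas 4.1--4.3 of \cite{0} but carries out the analogous computation for $\hat u_\varepsilon$ in Section 5): rewrite the weighted norms as Euclidean integrals, rescale $y=x/\varepsilon$, Taylor-expand the exponential weights, and treat $N=3,4$ by truncating the borderline integrals at $|y|\lesssim 1/\varepsilon$. You deviate in the bookkeeping of the gradient term. The paper keeps the cross term in the form $-\tfrac12\int\phi^{2}U_{\varepsilon}(x\cdot\nabla U_{\varepsilon})\,dx$, which after rescaling is literally $\alpha_{N}\varepsilon^{2}+O(\varepsilon^{N-2})$ by the definition \eqref{a1}, and compares $\int|\nabla(\phi U_\varepsilon)|^2$ with $\int\phi^{2}|\nabla U_{\varepsilon}|^{2}=K_1+O(\varepsilon^{N-2})$; you instead (i) integrate the cross term by parts into $\tfrac N4\int\phi^{2}U_{\varepsilon}^{2}\,dx$ (correct: $x\cdot\nu=-x_N=0$ on the boundary) and (ii) evaluate $\int|\nabla(\phi U_\varepsilon)|^2$ by inserting the equation \eqref{2.3}, getting $K_2+K_3+O(\varepsilon^{N-2})=K_1+O(\varepsilon^{N-2})$, which is equally valid. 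The price of (i) is that for $N\ge5$ you must still identify $\tfrac N4\int_{\mathbb{R}^N_+}U_1^{2}\,dy$ with $\alpha_N$; this is true, but not by the step you sketch (integrating against $\partial_{y_N}R^{-(N-2)}$ with $(x_N^0)^2=N/(N-2)$ absorbing the flux) — the clean argument is the dilation identity $\int_{\mathbb{R}^N_+}y\cdot\nabla(U_1^{2})\,dy=-N\int_{\mathbb{R}^N_+}U_1^{2}\,dy$, whose boundary term vanishes because $y\cdot\nu=-y_N=0$ on $\{y_N=0\}$ and whose sphere-at-infinity term vanishes for $N\ge5$; since $-\tfrac12 U_1(y\cdot\nabla U_1)=\tfrac{(N-2)k_N^{2}}{2}\,\frac{|y'|^{2}+y_N(y_N+x_N^0)}{(1+|y'|^{2}+|y_N+x_N^0|^{2})^{N-1}}$, this gives exactly $\alpha_N=\tfrac N4\,d_N$, and for $N=4$ it reproduces the stated coefficient $\tfrac{k_4^{2}\omega_4}{2}$ since $N/4=1$. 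One small correction: your claim that the $\tfrac1{16}\int|x|^{2}\phi^{2}U_{\varepsilon}^{2}\,dx$ term is $O(\varepsilon^{4})$ holds only for $N\ge7$; the same truncation count gives $O(\varepsilon^{4}|\ln\varepsilon|)$ for $N=6$, $O(\varepsilon^{3})$ for $N=5$ and $O(\varepsilon^{N-2})$ for $N=3,4$, which is still $o(\varepsilon^{2})$ (resp. absorbed in the stated $O(\varepsilon^{2})$, $O(\varepsilon)$ remainders), so the expansions are unaffected. With these two adjustments your argument reproduces all three expansions, including the $L^{2^*}_K$ and trace estimates, whose remainder control by splitting at $|y|\le1/\varepsilon$ you correctly flag as the delicate (but routine) part.
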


The next Lemma deals with the asymptotic behavior of the $X$-norm, $L^{{2}}_{K}(\mathbb{R}^{N}_{+})$-norm and $L^{{2}_*}_{K}(\mathbb{R}^{N-1})$-norm for $\hat{u}_{\varepsilon}$ as $\varepsilon \rightarrow0$. Define the constants
\begin{equation}\label{aa1}
A_N:={\|\nabla{{\hat{U}}_{\varepsilon}}\|_{{L^{2}}(\mathbb{R}^{N}_{+})}^{2}},~
B_N:={\|{{\hat{U}_{\varepsilon}}}\|}_{L^{{2}_{*}}({\mathbb{R}}^{N-1})}^{2}.
\end{equation}
Then $A_N/B_N=S_0$ (see \cite{18}). The following estimates for $\hat{u}_{\varepsilon}$ were established in  \cite{2021} (See Lemma 5.3 and Lemma 5.4 in \cite{2021} for details).
\begin{Lem}\label{lem2.9}
Suppose that $N\geq7$. As $\varepsilon\rightarrow0$, we have
\begin{equation*}
 \begin{aligned}
&{\|\hat{u}_{{\varepsilon}}\|}^{2}=
A_{N}+\hat{\alpha}_{N}{{\varepsilon}}^{2}+O({{\varepsilon}}^{4}),\\
&{\|\hat{u}_{{\varepsilon}}\|}_{L^{{2}}_{K}({\mathbb{R}}^{N}_{+})}^{2}=\hat{d}_{N}{{\varepsilon}}^{2}+O({\varepsilon}^{N-2}),\\
&{\|\hat{u}_{\varepsilon}\|}_{L^{{2}_*}_{K}({\mathbb{R}}^{N-1})}^{2_{*}}
={B_{N}}^{{2_*}/{2}}-\hat{\gamma}_{N}{{\varepsilon}}^{2}+o({\varepsilon}^{2}),
 \end{aligned}
\end{equation*}
where 
\begin{eqnarray}
&&\hat{\alpha}_{N}=\frac{\omega_{N-1}(N-2)}{4(N-4)}\Bigg(B\Big(\frac{N+1}{2}, \frac{N-3}{2}\Big)+\frac{1}{N-3}B\Big(\frac{N-1}{2}, \frac{N-1}{2}\Big)\Bigg), \label{a5}\\
&&\hat{d}_{N}=\frac{\omega_{N-1}}{2(N-4)}B\Big(\frac{N-1}{2}, \frac{N-3}{2}\Big), \label{a66}\\
&&\hat{\gamma}_N=\frac{\omega_{N-1}}{8(N-2)}B\Big(\frac{N+1}{2}, \frac{N-3}{2}\Big), \label{a7}
\end{eqnarray}
and $\omega_{N-1}$ is the area of unit sphere in ${\mathbb{R}^{N-1}}$.
\end{Lem}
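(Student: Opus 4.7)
The plan is to reduce all three quantities to weighted integrals of $\hat{U}_\varepsilon$ and then exploit the rescaling $x=\varepsilon y$. The key preparatory step is to set $\hat{u}_\varepsilon=K^{-1/2}w_\varepsilon$ with $w_\varepsilon:=\phi\hat{U}_\varepsilon$ and use $\nabla K=(x/2)K$ to get $\nabla\hat{u}_\varepsilon=K^{-1/2}(\nabla w_\varepsilon-(x/4)w_\varepsilon)$, whence
\[
K|\nabla\hat{u}_\varepsilon|^2=|\nabla w_\varepsilon|^2-\tfrac{1}{2}(x\cdot\nabla w_\varepsilon)\,w_\varepsilon+\tfrac{|x|^2}{16}w_\varepsilon^2.
\]
Integrating over $\mathbb{R}^N_+$ and applying the divergence theorem to the middle term, noting that the boundary flux vanishes because $x\cdot n=-x_N=0$ on $\partial\mathbb{R}^N_+$ and that $\mathrm{div}(x)=N$, I obtain the clean decomposition
\[
\|\hat{u}_\varepsilon\|^2=\int_{\mathbb{R}^N_+}|\nabla w_\varepsilon|^2\,dx+\frac{N}{4}\int_{\mathbb{R}^N_+}w_\varepsilon^2\,dx+\frac{1}{16}\int_{\mathbb{R}^N_+}|x|^2 w_\varepsilon^2\,dx.
\]

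Next I would estimate each of the three pieces under the rescaling $\hat{U}_\varepsilon(x)=\varepsilon^{-(N-2)/2}\hat{U}_1(x/\varepsilon)$, where $\hat{U}_1(y)=(|y'|^2+(y_N+1)^2)^{-(N-2)/2}$. For the first piece, the cutoff-induced terms $\int|\nabla\phi|^2\hat{U}_\varepsilon^2$, the cross term, and the tail $\int_{|x|\geq 1}|\nabla\hat{U}_\varepsilon|^2$ are each $O(\varepsilon^{N-2})$ by the pointwise bound $\hat{U}_\varepsilon(x)\lesssim\varepsilon^{(N-2)/2}/|x|^{N-2}$ on $\{|x|\geq1\}$, so the piece equals $A_N+O(\varepsilon^{N-2})$. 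For the second, rescaling reduces the leading integral to $\varepsilon^2\int_{\mathbb{R}^N_+}\hat{U}_1^2\,dy$, which is finite for $N\geq 5$ and, evaluated in polar coordinates on the slice $y_N\geq 0$ followed by a Beta-function identification, equals $\hat{d}_N\varepsilon^2$ as in \eqref{a66}. For the third, rescaling gives $(\varepsilon^4/16)\int|y|^2\hat{U}_1^2\,dy+o(\varepsilon^4)$, and the integral $\int|y|^2\hat{U}_1^2\,dy$ converges \emph{exactly} when $N\geq7$. Summing yields $\|\hat{u}_\varepsilon\|^2=A_N+(N/4)\hat{d}_N\varepsilon^2+O(\varepsilon^4)$; a final Beta-function manipulation using $\Gamma(a+1)=a\Gamma(a)$ applied to $B(\tfrac{N+1}{2},\tfrac{N-3}{2})$ and $B(\tfrac{N-1}{2},\tfrac{N-1}{2})$ verifies that $(N/4)\hat{d}_N$ equals $\hat{\alpha}_N$ as written in \eqref{a5}.

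For the weighted $L^2$ norm the weight cancels: $\|\hat{u}_\varepsilon\|_{L^2_K(\mathbb{R}^N_+)}^2=\int\phi^2\hat{U}_\varepsilon^2\,dx$. Splitting as $\int\hat{U}_\varepsilon^2\,dx-\int(1-\phi^2)\hat{U}_\varepsilon^2\,dx$, the first equals $\hat{d}_N\varepsilon^2$ by rescaling and the tail is $O(\varepsilon^{N-2})$ by the same pointwise bound. For the trace norm, since $1-2_*/2=-1/(N-2)$ one has $K(x',0)|\hat{u}_\varepsilon|^{2_*}=e^{-|x'|^2/[4(N-2)]}\phi^{2_*}(x',0)\hat{U}_\varepsilon(x',0)^{2_*}$. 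I would separate $\{|x'|\leq 1\}$ from $\{|x'|>1\}$, noting that on the outer region the integrand is $O(\varepsilon^{N-1})=o(\varepsilon^2)$, and on the inner region Taylor-expand $e^{-|x'|^2/[4(N-2)]}=1-|x'|^2/[4(N-2)]+O(|x'|^4)$. The constant term contributes $\int\hat{U}_\varepsilon(x',0)^{2_*}dx'=B_N^{2_*/2}$ by scale invariance, the linear-in-$|x'|^2$ term rescales to $-\hat{\gamma}_N\varepsilon^2$ with $\hat{\gamma}_N$ as in \eqref{a7}, and the quartic remainder together with the outer-region error is $o(\varepsilon^2)$.

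The main obstacle is the third integral $\int|x|^2 w_\varepsilon^2\,dx$ in the norm identity: its convergence after rescaling forces $N\geq 7$, since for $N\in\{5,6\}$ one gets either a logarithmic factor or a different scaling that would alter the form of $\hat{\alpha}_N$ and destroy the claimed $O(\varepsilon^4)$ remainder. This is why the dimension restriction in the statement is genuine rather than cosmetic. The remaining technical nuisance is the Beta-function algebra needed to match $(N/4)\hat{d}_N$ with the expression for $\hat{\alpha}_N$ in \eqref{a5}; this is a short but careful $\Gamma$-recursion computation of the kind $\Gamma(\tfrac{N+1}{2})=\tfrac{N-1}{2}\Gamma(\tfrac{N-1}{2})$ and $\Gamma(\tfrac{N-1}{2})=\tfrac{N-3}{2}\Gamma(\tfrac{N-3}{2})$, which collapses the two Beta terms into a common factor proportional to $B(\tfrac{N-1}{2},\tfrac{N-3}{2})$.
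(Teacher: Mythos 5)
Your proposal is correct, and for the gradient norm it takes a genuinely different route from the one the paper relies on. The paper does not reprove Lemma \ref{lem2.9} at all: it quotes Lemmas 5.3--5.4 of \cite{2021}, and its own parallel computations (Lemmas \ref{lem4.1}--\ref{lem4.3}, which extend the estimates to $3\le N\le 6$) expand $\|\hat u_\varepsilon\|^2$ by keeping the three pieces $\int\phi^2|\nabla\hat U_\varepsilon|^2$, $-\tfrac12\int\phi^2\hat U_\varepsilon\,(x\cdot\nabla\hat U_\varepsilon)$ and $\tfrac1{16}\int\phi^2|x|^2\hat U_\varepsilon^2$, rescaling each one separately, so that the $\varepsilon^2$-coefficient emerges as $\tfrac{N-2}{2}(D_{1,N}+D_{2,N})$ and is identified with $\hat\alpha_N$ through two separate Beta integrals. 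You instead integrate the cross term by parts (legitimately: $w_\varepsilon=\phi\hat U_\varepsilon$ is compactly supported and $x\cdot n=-x_N=0$ on $\partial\mathbb{R}^N_+$), which collapses it to $\tfrac N4\int w_\varepsilon^2$, so the entire $\varepsilon^2$-coefficient is $\tfrac N4\hat d_N$ and only the single integral $\hat d_N$ needs to be evaluated; the price is the extra identity $\tfrac N4\hat d_N=\hat\alpha_N$, which does hold: writing $B_0=B\bigl(\tfrac{N-1}{2},\tfrac{N-3}{2}\bigr)$, the Gamma recursion gives $B\bigl(\tfrac{N+1}{2},\tfrac{N-3}{2}\bigr)=\tfrac{N-1}{2(N-2)}B_0$ and $B\bigl(\tfrac{N-1}{2},\tfrac{N-1}{2}\bigr)=\tfrac{N-3}{2(N-2)}B_0$, so the bracket in \eqref{a5} equals $\tfrac{N}{2(N-2)}B_0$ and $\hat\alpha_N=\tfrac{N\omega_{N-1}}{8(N-4)}B_0=\tfrac N4\hat d_N$, consistent with \eqref{a66}. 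Your error bookkeeping is also sound: the cutoff and tail contributions are $O(\varepsilon^{N-2})$, hence absorbed into $O(\varepsilon^4)$ once $N\ge7$, and you correctly locate the dimensional restriction in the convergence of $\int_{\mathbb{R}^N_+}|y|^2\hat U_1^2\,dy$ (exactly $N\ge7$), which is the same place the paper's term $\int\phi^2|x|^2\hat U_\varepsilon^2$ produces the $\varepsilon^4|\ln\varepsilon|$ and $\varepsilon^{N-2}$ corrections for $N\le6$. The $L^2_K$ and trace estimates follow the same weight-cancellation/Taylor-expansion/rescaling scheme as the paper, with $\hat\gamma_N=\tfrac{1}{4(N-2)}\int_{\mathbb{R}^{N-1}}|y'|^2(1+|y'|^2)^{-(N-1)}dy'$ matching \eqref{a7}. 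In short, your decomposition buys fewer explicit integrals and makes the positivity of each term manifest, at the cost of one Beta-function check; the paper's term-by-term route avoids that check and is the form in which the lower-dimensional variants are carried out in Section 5.
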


\vs{2mm}
\section{The proof of Theorem \ref{Th1.1}}\label{S3}
\vs{2mm}

 In this section, we are going to verify  \eqref{2.11} and then complete the proof of Theorem \ref{Th1.1}. We always assume that $N\geq3$, $\lambda\geq0, a=1$,
$\mu\geq0, \lambda+\mu>0$ and $2\leq q<2_*$ in this section.

Set
\begin{equation*}\label{030}
c_{\lambda, \mu}^{*}:=\inf\limits_{u\in X}\big\{\sup\limits_{t>0}I^1_{\lambda, \mu}(tu):u\geq0 ~\text{and}~ u\not \equiv 0\big\}.
\end{equation*}
It is clear that $c^1_{\lambda, \mu}\leq c_{\lambda, \mu}^{*}$. Hence, to verify  \eqref{2.11} in Lemma \ref{lem2.6}, it is sufficient to verify that \begin{equation}\label{3.0}
c_{\lambda, \mu}^{*}<A,
\end{equation}
where $A$ is given by \eqref{2.5}.

\subsection{Verification of the condition (4.1) when $\lambda>0$ and $\mu=0$ }\label{S3.1}

This subsection is devote to verifying condition \eqref{3.0} under the hypothesis  $(1)$ in Theorem  \ref{Th1.1}. Notice that the energy function associated to \eqref{1.5} with $ a=1$ and $\mu=0$ is
\begin{equation}
	I^1_{\lambda, 0}(u):={\frac{1}{2}}{\|{u}\|}^{2}-{\frac{\lambda }{2}}\|u_+\|_{L^{{2}}_{K}({\mathbb{R}}^{N}_{+})}^{2}
-{\frac{1 }{2^{*}}}\|{u_{+}}\|_{L^{{2}^{*}}_{K}({\mathbb{R}}^{N}_{+})}^{2^{*}}
-{\frac{1 }{2_{*}}}\|{u_{+}}\|_{L^{{2}_{*}}_{K}({\mathbb{R}}^{N-1})}^{2_{*}}, \quad u\in X.\notag
\end{equation}
We start with the asymptotic estimate of the ${L^{2}_{K}({\mathbb{R}}^{N}_{+})}$-norm of $u_{\varepsilon}$ as  $\varepsilon\rightarrow0$.
\begin{Lem}\label{lem3.1}
 As $\varepsilon\rightarrow0$, one has
\begin{equation*}
{\|u_{{\varepsilon}}\|}_{L^{2}_{K}({\mathbb{R}}^{N}_{+})}^{2}=
\begin{cases}
d_{N}{{\varepsilon}}^{2}+o({{\varepsilon}}^{2}),~&~~N\geq5,\\[1mm]
\frac{{k_{4}^{2}\omega_4}}{2}\varepsilon^2|\ln\varepsilon|+O({{\varepsilon}}^{2}),~&~~N=4,\\[1mm]
O({\varepsilon}),~&~~N=3,\\[1mm]
\end{cases}
\end{equation*}
where 
\begin{equation}\label{a4}
d_{N}=k_{N}^{2}\int_{\mathbb{R}^{N}_{+}}
\frac{1}{{{(1+|y{'}|^{2}+|y_{N}+x_{N}^{0}|^{2})^{N-2}}}}dy,
\end{equation}
$k_{N}$ is given by \eqref{2.02} and $\omega_{4}$ is the area of unit sphere in ${\mathbb{R}^{4}}$.
\end{Lem}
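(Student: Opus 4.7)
The plan is to reduce the weighted integral to a standard bubble integral by exploiting the definition $u_\varepsilon=K(x)^{-1/2}\phi(x)U_\varepsilon(x)$, so that $K(x)|u_\varepsilon(x)|^2=\phi(x)^2U_\varepsilon(x)^2$ and hence
\[
\|u_\varepsilon\|_{L^2_K(\mathbb{R}^N_+)}^2=\int_{\mathbb{R}^N_+}\phi(x)^2\,U_\varepsilon(x)^2\,dx.
\]
The exponential weight $K$ then disappears completely from this lower order quantity, and the only remaining issue is the scaling analysis of an unweighted bubble squared against a cutoff.

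Next I would perform the change of variables $x=\varepsilon y$ (so that $dx=\varepsilon^N dy$ and $U_\varepsilon(x)^2=k_N^2\varepsilon^{-(N-2)}\bigl(1+|y'|^2+|y_N+x_N^0|^2\bigr)^{-(N-2)}$), obtaining
\[
\|u_\varepsilon\|_{L^2_K(\mathbb{R}^N_+)}^2=k_N^2\varepsilon^2\int_{\mathbb{R}^N_+}\frac{\phi(\varepsilon y)^2}{\bigl(1+|y'|^2+|y_N+x_N^0|^2\bigr)^{N-2}}\,dy.
\]
For $N\geq 5$ the denominator decays like $|y|^{-2(N-2)}$ with $2(N-2)>N$, so the integrand (without the cutoff) belongs to $L^1(\mathbb{R}^N_+)$. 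Since $0\le\phi(\varepsilon y)^2\le 1$ and $\phi(\varepsilon y)\to 1$ pointwise as $\varepsilon\to 0$, dominated convergence yields the integral converges to $d_N/k_N^2$ from \eqref{a4}, giving the stated $d_N\varepsilon^2+o(\varepsilon^2)$.

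The case $N=4$ is the main technical point, because the integrand decays only like $|y|^{-4}$ and the radial integral diverges logarithmically. My plan is to split the region of integration into $\{|y|\le R\}$ (where the integral is $O(1)$), a bulk annulus $\{R\le|y|\le 1/\varepsilon\}$, and an outer transition $\{1/\varepsilon\le|y|\le 2/\varepsilon\}$ (where $\phi(\varepsilon y)$ is bounded and the annulus contributes $O(1)$). On the bulk annulus $\phi(\varepsilon y)\equiv 1$, and passing to spherical coordinates one uses that, on half-spheres $\{|y|=r,\,y_N>0\}$ with $r\to\infty$, the shift $x_N^0$ becomes negligible, so the angular integral tends to $\omega_4/2$; the radial piece is $\int_R^{1/\varepsilon}r^{-1}(1+O(r^{-2}))\,dr=|\ln\varepsilon|+O(1)$. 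Multiplying by the prefactor $k_4^2\varepsilon^2$ gives the leading term $\tfrac{k_4^2\omega_4}{2}\varepsilon^2|\ln\varepsilon|$ plus error $O(\varepsilon^2)$. For $N=3$ the integrand behaves like $|y|^{-2}$, and a crude spherical-coordinate upper bound $k_3^2\varepsilon^2\int_0^{2/\varepsilon}\frac{r^2}{1+r^2}\,dr=O(\varepsilon^2\cdot\varepsilon^{-1})=O(\varepsilon)$ suffices.

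The main obstacle is thus the $N=4$ case: justifying both the exact coefficient $\omega_4/2$ (rather than $\omega_4$) of the leading logarithmic divergence and that all lower-order terms are genuinely $O(\varepsilon^2)$ rather than contributing an additional $\varepsilon^2|\ln\varepsilon|$ piece from the asymmetry produced by the shift $x_N^0$. I would handle this by a careful estimate $\bigl|(1+|y'|^2+|y_N+x_N^0|^2)^{-2}-(1+|y|^2)^{-2}\bigr|=O(|y|^{-5})$ on the bulk annulus, so that the perturbation caused by the shift produces only an $O(1)$ correction after radial integration, absorbed in $O(\varepsilon^2)$.
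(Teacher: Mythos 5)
Your proposal is correct and follows essentially the same route as the paper's proof: factor out the weight via $K|u_\varepsilon|^2=\phi^2U_\varepsilon^2$, rescale $x=\varepsilon y$, handle $N\ge5$ by integrability of the rescaled kernel, $N=4$ by comparing with the radially symmetric kernel on an annulus to extract the coefficient $\tfrac{k_4^2\omega_4}{2}$ of $\varepsilon^2|\ln\varepsilon|$, and $N=3$ by a crude $O(\varepsilon^{-1})$ bound. The only (harmless) differences are using dominated convergence instead of the paper's explicit $O(\varepsilon^{N-2})$ remainder for $N\ge5$, and comparing with $(1+|y|^2)^{-2}$ rather than $|y|^{-4}$ in the $N=4$ annulus computation.
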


\begin{proof}
For $N\geq5$, by using the change of variables $y={x}/{{\varepsilon}}$, we have
\begin{equation}\label{z4}
  \begin{aligned}
{\|u_{{\varepsilon}}\|}_{L^{2}_{K}({\mathbb{R}}^{N}_{+})}^{2}
&=\int_{\mathbb{R}^{N}_{+}}{\phi}^{2}U_{\varepsilon}^{2}dx
=\int_{\mathbb{R}^{N}_{+}}U_{\varepsilon}^{2}dx+O(\varepsilon^{N-2})\\
&={\varepsilon}^{N-2}k_{N}^{2}\int_{\mathbb{R}^{N}_{+}}
\frac{1}{{{({\varepsilon}^{2}+|x{'}|^{2}+|x_{N}+{\varepsilon}x_{N}^{0}|^{2})^{N-2}}}}dx+O({\varepsilon}^{N-2})\\
&={\varepsilon}^{2}k_{N}^{2}\int_{\mathbb{R}^{N}_{+}}
\frac{1}{{{(1+|y{'}|^{2}+|y_{N}+x_{N}^{0}|^{2})^{N-2}}}}dy+O(\varepsilon^{N-2}).
 \end{aligned}
\end{equation}

For $N=3, 4$, we can similarly calculate
\begin{equation}\label{zz5}
  \begin{aligned}
{\|u_{{\varepsilon}}\|}_{L^{2}_{K}({\mathbb{R}}^{N}_{+})}^{2}
&=\int_{\mathbb{R}^{N}_{+}}{\phi}^{2}U_{\varepsilon}^{2}dx=\int_{B_{2}^{+}}U_\varepsilon^{2}dx+\int_{ B_{2}^{+}\setminus{B_{1}^{+}}}({\phi}^{2}-1)U_\varepsilon^{2}dx\\
&=\int_{B_{2}^{+}}U_\varepsilon^{2}dx+O({\varepsilon}^{N-2})\\
&={\varepsilon}^{2}k_{N}^{2}\int_{{{B^{+}_{{2}/{\varepsilon}}}}}
\frac{1}{{({1+|y{'}|^{2}+|y_{N}+x_{N}^{0}|^{2})^{N-2}}}}dy+O({\varepsilon}^{N-2})\\
&={\varepsilon}^{2}k_{N}^{2}\int_{B^{+}_{{2}/{\varepsilon}}\setminus{B_{1}^{+}}}
\frac{1}{{({1+|y{'}|^{2}+|y_{N}+x_{N}^{0}|^{2})^{N-2}}}}dy+O({\varepsilon}^{2})+O({\varepsilon}^{N-2})
  \end{aligned}
\end{equation}
as $\varepsilon  \rightarrow 0$, where $B_{r}^{+}:=B_{r}(0)\cap {\mathbb{R}}^{N}_{+}$ for any $r>0$. If $N=4$, we have
\begin{equation*}
\begin{aligned}
0&<\int_{B_{2/\varepsilon}^{+}\setminus{B_{1}^{+}}}\frac{1}{|y|^{4}}dy-\int_{B^{+}_{{2}/{\varepsilon}}\setminus{B_{1}^{+}}}
\frac{1}{{({1+|y{'}|^{2}+|y_{4}+x_{4}^{0}|^{2})^{2}}}}dy\\
&=\int_{B_{2/\varepsilon}^{+}\setminus{B_{1}^{+}}}\frac{{(1+|y{'}|^{2}+|y_{4}+x_{4}^{0}|^{2})^{2}}-|y|^{4}}{|y|^{4}{(1+|y{'}|^{2}+|y_{4}+x_{4}^{0}|^{2})^{2}}}dy=O_\varepsilon(1),
\end{aligned}
\end{equation*}
and thus
\begin{equation}\label{zzz2}
\begin{aligned}
\int_{B^{+}_{{2}/{\varepsilon}}\setminus{B_{1}^{+}}}
\frac{1}{{({1+|y{'}|^{2}+|y_{4}+x_{4}^{0}|^{2})^{2}}}}dy
&=\int_{B_{2/\varepsilon}^{+}\setminus{B_{1}^{+}}}\frac{1}{|y|^{4}}dy+O_\varepsilon(1)\\
&=\frac{\omega_4}{2}\int_{1}^{{{2}/\varepsilon}}r^{-1}dr+O_\varepsilon(1)\\
&=\frac{\omega_4}{2}(|\ln \varepsilon|+\ln 2)+O_\varepsilon(1),
\end{aligned}
\end{equation}
where $\omega_4$ is the area of unit sphere in $\mathbb{R}^{4}$ and  $O_\varepsilon(1)$ is a constant associated with $\varepsilon$. If $N=3$, we deduce that
\begin{equation}\label{zzz3}
\begin{aligned}
\int_{B^{+}_{{2}/{\varepsilon}}\setminus{B_{1}^{+}}}
\frac{1}{{(1+|y{'}|^{2}+|y_{3}+x_{3}^{0}|^{2})}}dy
=O\Big(\int_{B_{2/\varepsilon}^{+}\setminus{B_{1}^{+}}}\frac{1}{|y|^{2}}dy\Big)
=O(\varepsilon^{-1}).
\end{aligned}
\end{equation}
It follows from \eqref{zz5}-\eqref{zzz3} that
\begin{equation}\label{z5}
{\|u_{{\varepsilon}}\|}_{L^{2}_{K}({\mathbb{R}}^{N}_{+})}^{2}=
\begin{cases}
\frac{{k_{4}^{2}\omega_4}}{2}\varepsilon^2|\ln\varepsilon|+O({\varepsilon}^2),~&~~N=4,\\[1mm]
O({\varepsilon}),~&~~N=3.\\[1mm]
\end{cases}
\end{equation}

Hence, the proof is completed from \eqref{z4} and \eqref{z5}.
\end{proof}

To estimate the Mountain Pass level for $N=3$, we need more delicate estimates. To this end, we define
\begin{equation}\label{xxx1}
\psi(x):=e^{-|x|^2/8\sqrt{5}}
\end{equation}
and
\begin{equation*}\label{z6} v_{\varepsilon}(x):=K(x)^{-\frac{1}{2}}\psi(x)U_{{\varepsilon}}(x),
\end{equation*}
where $\varepsilon>0$ and $U_\varepsilon$ is given  by \eqref{2.6}. Then we have the following lemma.
\begin{Lem}\label{lem3.2}
 For small $\varepsilon > 0$, we have
\begin{align*}
&{\|v_{{\varepsilon}}\|}^{2}<K_1+\frac{(3+\sqrt{5})\sqrt{3}\varepsilon}{4} \int_{\mathbb{R}^{3}_{+}}
\frac{\psi^2}{|x|^{2}}dx,\\
&{\|v_{{\varepsilon}}\|}_{L^{2}_{K}({\mathbb{R}}^{3}_{+})}^{2}> \sqrt{3}\varepsilon\int_{\mathbb{R}^{3}_{+}}
\frac{\psi^2}{|x|^{2}}dx-d_1\varepsilon^2|\ln\varepsilon|-d_2\varepsilon^2,
\end{align*}
where $K_1, \psi$ are given by \eqref{2.6}, \eqref{xxx1} and $d_1$, $d_2$ are positive constants.
\end{Lem}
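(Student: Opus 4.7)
The plan is to treat the two inequalities separately, exploiting the algebra forced by the specific choice $\psi(x)=e^{-|x|^2/(8\sqrt{5})}$.

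\emph{The $L^2_K$ lower bound.} Since $Kv_\varepsilon^2=\psi^2 U_\varepsilon^2$, $k_3^2=\sqrt{3}$, and $\varepsilon^2+|x'|^2+(x_3+\varepsilon\sqrt{3})^2=|x|^2+4\varepsilon^2+2\sqrt{3}\varepsilon x_3$, the first step is the identity
\[
\|v_\varepsilon\|_{L^2_K(\mathbb{R}^3_+)}^2=\sqrt{3}\varepsilon\int_{\mathbb{R}^3_+}\frac{\psi^2(x)}{|x|^2+4\varepsilon^2+2\sqrt{3}\varepsilon x_3}\,dx.
\]
Subtracting $\sqrt{3}\varepsilon\int\psi^2/|x|^2\,dx$ produces a negative integral with nonnegative integrand (since $x_3\ge 0$). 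I would split at $|x|\sim\varepsilon$: the region $|x|\lesssim\varepsilon$ contributes $O(\varepsilon)$ (the denominator is comparable to $\varepsilon^2|x|^2$), the annulus $\varepsilon\lesssim|x|\lesssim 1$ contributes $O(\varepsilon|\ln\varepsilon|)$ (integrand of order $\varepsilon/|x|^3$ there), and the Gaussian decay of $\psi$ controls $|x|\gtrsim 1$. Multiplying the correction by the outside $\sqrt{3}\varepsilon$ yields the claimed lower bound with appropriate constants $d_1,d_2$.

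\emph{The $X$-norm upper bound.} The key is the logarithmic derivative expansion
\[
\frac{\nabla v_\varepsilon}{v_\varepsilon}=-\beta x+\frac{\nabla U_\varepsilon}{U_\varepsilon},\qquad \beta:=\tfrac{\sqrt{5}+1}{4\sqrt{5}},
\]
giving $K|\nabla v_\varepsilon|^2=\psi^2|\nabla U_\varepsilon|^2-2\beta\psi^2U_\varepsilon(x\cdot\nabla U_\varepsilon)+\beta^2\psi^2|x|^2U_\varepsilon^2$. Integration by parts in $\mathbb{R}^3_+$ (the boundary term vanishes because $x_3\,n_3=0$ on $\{x_3=0\}$) combined with $\nabla\psi^2=-\tfrac{x}{2\sqrt{5}}\psi^2$ rewrites the cross term as $\frac{1}{4\sqrt{5}}\int\psi^2|x|^2U_\varepsilon^2\,dx-\frac{3}{2}\int\psi^2 U_\varepsilon^2\,dx$. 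Next, testing $-\Delta U_\varepsilon=U_\varepsilon^{2^*-1}$ (see \eqref{2.3} with $\tau=1$) against $\psi^2 U_\varepsilon$ and using the boundary condition $\partial_n U_\varepsilon=U_\varepsilon^{2_*-1}$ on $\partial\mathbb{R}^3_+$, I can express $\int\psi^2|\nabla U_\varepsilon|^2\,dx$ in terms of $\int\psi^2 U_\varepsilon^{2^*}dx$, $\int\psi^2 U_\varepsilon^{2_*}dx'$, and the same lower-order integrals. Assembling everything, the algebra collapses (with the numerical coefficients $3/40$ and $3/4$ forced by the specific value of $\beta$) into
\[
\|v_\varepsilon\|^2=\int\psi^2 U_\varepsilon^{2^*}dx+\int\psi^2 U_\varepsilon^{2_*}dx'+\frac{3}{40}\int\psi^2|x|^2U_\varepsilon^2\,dx+\frac{3}{4}\int\psi^2 U_\varepsilon^2\,dx.
\]
Since $\psi\le 1$, the first two critical integrals are bounded by $K_2$ and $K_3$, and $K_2+K_3=K_1$ by \eqref{2.7}.

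\emph{Leading-order evaluation and conclusion.} The substitution $y=x/\varepsilon$ or a direct expansion yields
\[
\int\psi^2|x|^2U_\varepsilon^2\,dx=\sqrt{3}\varepsilon\int\psi^2\,dx+o(\varepsilon),\qquad \int\psi^2 U_\varepsilon^2\,dx=\sqrt{3}\varepsilon\int\frac{\psi^2}{|x|^2}\,dx+O(\varepsilon^2|\ln\varepsilon|).
\]
The central identity, explaining the exotic exponent $1/(8\sqrt{5})$, is the Gaussian ratio $\int_{\mathbb{R}^3_+}\psi^2\,dx=2\sqrt{5}\int_{\mathbb{R}^3_+}\psi^2/|x|^2\,dx$ proven via spherical coordinates. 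Substituting, the leading coefficient of $\|v_\varepsilon\|^2-K_1$ becomes $\bigl(\tfrac{3}{40}\cdot 2\sqrt{5}+\tfrac{3}{4}\bigr)\sqrt{3}=\tfrac{3(5+\sqrt{5})\sqrt{3}}{20}=\tfrac{(15+3\sqrt{5})\sqrt{3}}{20}$, which is strictly less than the target $\tfrac{(3+\sqrt{5})\sqrt{3}}{4}=\tfrac{(15+5\sqrt{5})\sqrt{3}}{20}$ by the strictly positive amount $\tfrac{\sqrt{15}}{10}\varepsilon\int\psi^2/|x|^2\,dx$. The main obstacle will be to verify carefully that every error term generated along the way (from the $\psi\le 1$ bound on the critical integrals, from the pointwise expansion of $1/(|x|^2+O(\varepsilon^2+\varepsilon x_3))$, and from the rescaling) is genuinely $o(\varepsilon)$, so that this strictly positive $O(\varepsilon)$ gap dominates them and delivers the required strict inequality for all sufficiently small $\varepsilon>0$.
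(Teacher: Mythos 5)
Your proposal is correct, and for the $X$-norm upper bound it takes a genuinely different route from the paper. The paper expands $K|\nabla v_\varepsilon|^2$ directly into six terms and bounds each one pointwise, using only $\psi\le 1$, $|x|^2\le \varepsilon^2+|x'|^2+|x_3+\sqrt{3}\varepsilon|^2$, $\nabla\psi=-\tfrac{1}{4\sqrt5}\psi x$ and the Gaussian ratio $\int_{\mathbb{R}^3_+}\psi^2\,dx=2\sqrt5\int_{\mathbb{R}^3_+}\psi^2|x|^{-2}dx$; it never invokes the equation satisfied by $U_\varepsilon$. You instead test \eqref{2.3} (with $\tau=1$) against $\psi^2U_\varepsilon$ and integrate by parts on the radial cross term (the boundary term indeed vanishes since $x\cdot n=-x_3=0$ on $\{x_3=0\}$, and the Gaussian decay of $\psi$ justifies the integration by parts at infinity), which collapses $\|v_\varepsilon\|^2$ into the exact identity with critical terms plus $\tfrac34\int\psi^2U_\varepsilon^2+\tfrac{3}{40}\int\psi^2|x|^2U_\varepsilon^2$ — I checked the coefficients and they are right. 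Bounding the critical terms by $K_2+K_3=K_1$ via \eqref{2.7} and the two subcritical terms by the same pointwise comparisons and the Gaussian ratio gives leading constant $\tfrac{3(5+\sqrt5)}{20}\sqrt3<\tfrac{(3+\sqrt5)\sqrt3}{4}$, so your route actually yields a slightly sharper bound than the stated one. One remark: the "main obstacle" you anticipate (tracking $o(\varepsilon)$ errors) is not really there — since $\tfrac{|x|^2}{\varepsilon^2+|x'|^2+(x_3+\sqrt3\varepsilon)^2}\le 1$ and $\varepsilon^2+|x'|^2+(x_3+\sqrt3\varepsilon)^2>|x|^2$ on $\mathbb{R}^3_+$, both subcritical integrals admit exact one-sided pointwise bounds, so no asymptotic expansion is needed for the upper bound at all. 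Your treatment of the $L^2_K$ lower bound is essentially the paper's argument: the paper bounds the same difference by two integrals estimated in polar coordinates, which is the content of your splitting at $|x|\sim\varepsilon$ and $|x|\sim1$, giving the $d_1\varepsilon^2|\ln\varepsilon|+d_2\varepsilon^2$ loss.
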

\begin{proof}
By the definition of $v_{{\varepsilon}} $, we directly compute that
  \begin{align*}
{\|v_{{\varepsilon}} \|}^{2}
&=\sqrt{3}\varepsilon\int_{\mathbb{R}^{3}_{+}}\frac{|\nabla{\psi}|^{2}}{(\varepsilon^2+|x{'}|^{2}+|x_{3}+\sqrt{3}{\varepsilon}|^{2})}dx\\
&\quad-2\sqrt{3}\varepsilon\int_{\mathbb{R}^{3}_{+}}\frac{\psi{\nabla{\psi}\cdot{(x{'},x_3+\sqrt{3}\varepsilon)}}}{(\varepsilon^2+|x{'}|^{2}+|x_{3}+\sqrt{3}{\varepsilon}|^{2})^2}dx\\
&\quad-\frac{\sqrt{3}\varepsilon}{2}\int_{\mathbb{R}^{3}_{+}}\frac{\psi(x\cdot{\nabla{\psi}})}{(\varepsilon^2+|x{'}|^{2}+|x_{3}+\sqrt{3}{\varepsilon}|^2)}dx+\int_{\mathbb{R}^{3}_{+}}{\psi}^{2}|\nabla{{{U}_{\varepsilon}}}|^{2}dx\\
&\quad+\frac{\sqrt{3}\varepsilon}{2}\int_{\mathbb{R}^{3}_{+}}\frac{{\psi}^2\big(|x{'}|^2+x_3(x_3+\sqrt{3}\varepsilon)\big)}{(\varepsilon^2+|x{'}|^{2}+|x_{3}+\sqrt{3}{\varepsilon}|^2)^2}dx\\
&\quad+\frac{\sqrt{3}\varepsilon}{16}\int_{\mathbb{R}^{3}_{+}}\frac{{\psi}^2|x|^2}{(\varepsilon^2+|x{'}|^{2}+|x_{3}+\sqrt{3}{\varepsilon}|^2)}dx\\
&<K_1+\sqrt{3}\varepsilon\int_{\mathbb{R}^{3}_{+}}\frac{|\nabla{\psi}|^{2}}{(\varepsilon^2+|x{'}|^{2}+|x_{3}+\sqrt{3}{\varepsilon}|^{2})}dx\\
&\quad-2\sqrt{3}\varepsilon\int_{\mathbb{R}^{3}_{+}}\frac{\psi{\nabla{\psi}\cdot{(x{'},x_3+\sqrt{3}\varepsilon)}}}{(\varepsilon^2+|x{'}|^{2}+|x_{3}+\sqrt{3}{\varepsilon}|^{2})^2}dx\\
&\quad-\frac{\sqrt{3}\varepsilon}{2}\int_{\mathbb{R}^{3}_{+}}\frac{\psi(x\cdot{\nabla{\psi}})}{(\varepsilon^2+|x{'}|^{2}+|x_{3}+\sqrt{3}{\varepsilon}|^2)}dx\\
&\quad+\frac{\sqrt{3}\varepsilon}{2}\int_{\mathbb{R}^{3}_{+}}\frac{{\psi}^2}{(\varepsilon^2+|x{'}|^{2}+|x_{3}+\sqrt{3}{\varepsilon}|^2)}dx+\frac{\sqrt{3}\varepsilon}{16}\int_{\mathbb{R}^{3}_{+}}{\psi}^2dx.\\
  \end{align*}
Since
\begin{equation*}
\nabla\psi(x)=-\frac{1}{4\sqrt{5}}\psi x,
\end{equation*}
one has
\begin{equation}\label{w1}
  \begin{aligned}
{\|v_{{\varepsilon}} \|}^{2}
&<K_1+\frac{(3+\sqrt{5})\sqrt{3}\varepsilon}{40}\int_{\mathbb{R}^{3}_{+}}{\psi}^{2}dx\\
&\quad+\frac{(5+\sqrt{5})\sqrt{3}\varepsilon}{10}\int_{\mathbb{R}^{3}_{+}}\frac{\psi^2}{(\varepsilon^2+|x{'}|^{2}+|x_{3}+\sqrt{3}{\varepsilon}|^{2})}dx.
  \end{aligned}
\end{equation}
Note that
\begin{equation}\label{w2}
  \begin{aligned}
\int_{\mathbb{R}^{3}_{+}}{\psi}^2dx
&=\int_{\mathbb{R}^{3}_{+}}e^{-\frac{|x|^2}{4\sqrt{5}}}dx
=\frac{\omega_3}{2}\int_{0}^{\infty}e^{-\frac{r^2}{4\sqrt{5}}}r^{2}dr
=\sqrt{5}\omega_3\int_{0}^{\infty}e^{-\frac{r^2}{4\sqrt{5}}}dr\\
&=2\sqrt{5}\int_{\mathbb{R}^{3}_{+}}\frac{e^{-\frac{|x|^2}{4\sqrt{5}}}}{|x|^2}dx
=2\sqrt{5}\int_{\mathbb{R}^{3}_{+}}\frac{\psi^2}{|x|^2}dx,
  \end{aligned}
\end{equation}
where $\omega_3$ is the area of unit sphere in $\mathbb{R}^{3}$.
It follows from  \eqref{w1} and \eqref{w2} that
\begin{equation*}
{\|v_{{\varepsilon}}\|}^{2}<K_1+\frac{(3+\sqrt{5})\sqrt{3}\varepsilon}{4} \int_{\mathbb{R}^{3}_{+}}
\frac{e^{-\frac{|x|^2}{4\sqrt{5}}}}{|x|^{2}}dx.
\end{equation*}

On the other hand, we have
\begin{equation}\label{A1}
  \begin{aligned}
{\|v_{{\varepsilon}}\|}_{L^{2}_{K}({\mathbb{R}}^{3}_{+})}^{2}
&=\int_{\mathbb{R}^{3}_{+}}{\psi}^{2}U_{\varepsilon}^{2}dx
=\sqrt{3}\varepsilon\int_{\mathbb{R}^{3}_{+}}\frac{e^{-\frac{|x|^2}{4\sqrt{5}}}}{(\varepsilon^2+|x{'}|^{2}+|x_{3}+\sqrt{3}{\varepsilon}|^2)}dx.
 \end{aligned}
\end{equation}
It is clear that
\begin{align*}
0&<\notag
\int_{\mathbb{R}^{3}_{+}}\frac{e^{-\frac{|x|^2}{4\sqrt{5}}}}{|x|^2}dx-\notag
\int_{\mathbb{R}^{3}_{+}}\frac{e^{-\frac{|x|^2}{4\sqrt{5}}}}
{(\varepsilon^2+|x{'}|^{2}+|x_{3}+\sqrt{3}{\varepsilon}|^2)}dx\\ \notag
&=\int_{\mathbb{R}^{3}_{+}}\frac{e^{-\frac{|x|^2}{4\sqrt{5}}}(\varepsilon^2+|x{'}|^{2}+|x_{3}+\sqrt{3}{\varepsilon}|^2-|x|^2)}{|x|^2(\varepsilon^2+|x{'}|^{2}+|x_{3}+\sqrt{3}{\varepsilon}|^2)}dx\\ \label{A2}
&=\int_{\mathbb{R}^{3}_{+}}\frac{e^{-\frac{|x|^2}{4\sqrt{5}}}(2\sqrt{3}{\varepsilon}x_3+4\varepsilon^2)}{|x|^2(\varepsilon^2+|x{'}|^{2}+|x_{3}+\sqrt{3}{\varepsilon}|^2)}dx\\ \notag
&<2\sqrt{3}{\varepsilon}\int_{\mathbb{R}^{3}_{+}}\frac{e^{-\frac{|x|^2}{4\sqrt{5}}}}{|x|(\varepsilon^2+|x|^{2})}dx
+4\varepsilon^2\int_{\mathbb{R}^{3}_{+}}\frac{e^{-\frac{|x|^2}{4\sqrt{5}}}}{|x|^2(\varepsilon^2+|x|^{2})}dx. \notag
 \end{align*}
Let
\begin{equation*}
I_1=2\sqrt{3}{\varepsilon}\int_{\mathbb{R}^{3}_{+}}\frac{e^{-\frac{|x|^2}{4\sqrt{5}}}}{|x|(\varepsilon^2+|x|^{2})}dx,
\quad
I_2=4\varepsilon^2\int_{\mathbb{R}^{3}_{+}}\frac{e^{-\frac{|x|^2}{4\sqrt{5}}}}{|x|^2(\varepsilon^2+|x|^{2})}dx.
\end{equation*}
We can use the polar coordinates to obtain
  \begin{align}
I_1\notag
&=\sqrt{3}\omega_3{\varepsilon}\int_{0}^{\infty}\frac{e^{-\frac{r^2}{4\sqrt{5}}}r}{\varepsilon^2+r^{2}}dr\\ \notag
&=\sqrt{3}\omega_3{\varepsilon}\Big(\int_{0}^{\varepsilon}\frac{e^{-\frac{r^2}{4\sqrt{5}}}r}{\varepsilon^2+r^{2}}dr
+\int_{\varepsilon}^{1}\frac{e^{-\frac{r^2}{4\sqrt{5}}}r}{\varepsilon^2+r^{2}}dr
+\int_{1}^{\infty}\frac{e^{-\frac{r^2}{4\sqrt{5}}}r}{\varepsilon^2+r^{2}}dr\Big)\\ \label{A3}
&<\sqrt{3}\omega_3{\varepsilon}\Big(\int_{0}^{\varepsilon}\frac{1}{\varepsilon}dr
+\int_{\varepsilon}^{1}\frac{1}{r}dr
+\int_{1}^{\infty}e^{-\frac{r^2}{4\sqrt{5}}}rdr\Big)\\ \notag
&=\sqrt{3}\omega_3{\varepsilon}+\sqrt{3}\omega_3{\varepsilon}|\ln\varepsilon|+\bar{d}_1\varepsilon,\notag
  \end{align}
and
\begin{equation}\label{A4}
  \begin{aligned}
I_2&=2\omega_3{\varepsilon}^2\int_{0}^{\infty}\frac{e^{-\frac{r^2}{4\sqrt{5}}}}{\varepsilon^2+r^{2}}dr\\
&=2\omega_3{\varepsilon}^2\Big(\int_{0}^{\varepsilon}\frac{e^{-\frac{r^2}{4\sqrt{5}}}}{\varepsilon^2+r^{2}}dr
+\int_{\varepsilon}^{\infty}\frac{e^{-\frac{r^2}{4\sqrt{5}}}}{\varepsilon^2+r^{2}}dr
\Big)\\
&<2\omega_3{\varepsilon}^2\Big(\int_{0}^{\varepsilon}\frac{1}{\varepsilon^2}dr
+\int_{\varepsilon}^{\infty}\frac{1}{r^2}dr\Big)=4\omega_3{\varepsilon},
  \end{aligned}
\end{equation}
where $\omega_3$ is the area of unit sphere in $\mathbb{R}^{3}$ and $\bar{d}_1>0$. It follows from \eqref{A2}-\eqref{A4} that
\begin{equation}\label{A5}
\int_{\mathbb{R}^{3}_{+}}\frac{e^{-\frac{|x|^2}{4\sqrt{5}}}}{(\varepsilon^2+|x{'}|^{2}+|x_{3}+\sqrt{3}{\varepsilon}|^2)}dx>\int_{\mathbb{R}^{3}_{+}}\frac{e^{-\frac{|x|^2}{4\sqrt{5}}}}{|x|^2}dx-\bar{d}_2{\varepsilon}|\ln\varepsilon|-\bar{d}_3\varepsilon,
\end{equation}
where $\bar{d}_2, \bar{d}_3>0$.
We conclude from \eqref{A1} and \eqref{A5} that
\begin{equation*}
{\|v_{{\varepsilon}}\|}_{L^{2}_{K}({\mathbb{R}}^{3}_{+})}^{2}> \sqrt{3}\varepsilon \int_{\mathbb{R}^{3}_{+}}
\frac{e^{-\frac{|x|^2}{4\sqrt{5}}}}{|x|^{2}}dx-\sqrt{3}\bar{d}_2\varepsilon^2|\ln\varepsilon|-\sqrt{3}\bar{d}_3\varepsilon^2.
\end{equation*}
We complete the proof of this Lemma.
\end{proof}

The next Lemma provides the $L^{6}_{K}(\mathbb{R}^3_{+})$-norm and $L^{4}_{K}(\mathbb{R}^2)$-norm of $v_{{\varepsilon}}$ as $\rightarrow0$.
\begin{Lem}\label{lem3.3}
Suppose that $N=3$. As $\varepsilon\rightarrow0$, one has
\begin{align*}
&{\|v_{{\varepsilon}}\|}_{L^{{2}^{*}}_{K}({\mathbb{R}}^{N}_{+})}^{2^{*}}=K_{2}+O({\varepsilon}^{2}),\\
&{\|v_{{\varepsilon}}\|}_{L^{{2}_{*}}_{K}({\mathbb{R}}^{N-1})}^{2_{*}}=K_{3}+O({\varepsilon}^{2}|\ln\varepsilon|),
\end{align*}
where $K_2$ and $K_3$ are defined in \eqref{2.6}.
\end{Lem}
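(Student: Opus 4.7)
The plan is to exploit the substitution $y=x/\varepsilon$ (resp.\ $y'=x'/\varepsilon$), which already turns the unweighted $L^{2^*}$ and $L^{2_*}$ norms of $U_\varepsilon$ into the $\varepsilon$-independent constants $K_2$ and $K_3$, and then to quantify the extra contraction introduced by the factor $K(x)^{-1/2}\psi(x)$ in the definition of $v_\varepsilon$. Using $K(x)=e^{|x|^2/4}$, $2^*=6$ and $2_*=4$ for $N=3$, the weight in $L^r_K$ combines with $(K^{-1/2}\psi)^r$ to give $K(x)^{1-r/2}\psi(x)^r$. Concretely,
\[
\|v_\varepsilon\|_{L^{2^*}_K(\mathbb{R}^3_+)}^{2^*}=\int_{\mathbb{R}^3_+} e^{-c|x|^2}U_\varepsilon^{6}\,dx,\qquad \|v_\varepsilon\|_{L^{2_*}_K(\mathbb{R}^{2})}^{2_*}=\int_{\mathbb{R}^{2}} e^{-c'|x'|^2}U_\varepsilon(x',0)^{4}\,dx',
\]
with constants $c=\tfrac12+\tfrac{3}{4\sqrt5}>0$ and $c'=\tfrac14+\tfrac{1}{2\sqrt5}>0$, so that the prefactor $\le 1$ and the errors are nonnegative.

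Next I would apply the scaling $y=x/\varepsilon$ (and $y'=x'/\varepsilon$) using $k_3=3^{1/4}$, $x_3^{0}=\sqrt3$, after which each integral becomes $K_i$ ($i=2,3$) with the bubble weighted by $e^{-c\varepsilon^2|y|^2}$ (respectively $e^{-c'\varepsilon^2|y'|^2}$). Subtracting the limits yields
\[
K_2-\|v_\varepsilon\|_{L^{6}_K}^{6}=k_3^{6}\int_{\mathbb{R}^3_+}\frac{1-e^{-c\varepsilon^2|y|^2}}{(1+|y'|^2+|y_3+\sqrt3|^2)^{3}}\,dy,
\]
and the analogous expression on the boundary. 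The elementary inequality $0\le 1-e^{-t}\le t$ then dominates the volume integrand by $c\varepsilon^2|y|^2(1+|y|^2)^{-3}$, which behaves like $|y|^{-4}$ at infinity and is integrable on $\mathbb{R}^3_+$. This immediately gives the $O(\varepsilon^2)$ bound for the volume term.

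For the boundary term the same approach is borderline: the analogous dominator $c'\varepsilon^2|y'|^2(4+|y'|^2)^{-2}$ behaves like $\varepsilon^2|y'|^{-2}$ at infinity in $\mathbb{R}^2$ and is not integrable. I would therefore split the integration over $\{|y'|\le 1/\varepsilon\}$ and $\{|y'|>1/\varepsilon\}$. On the inner region, the bound $1-e^{-c'\varepsilon^2|y'|^2}\le c'\varepsilon^2|y'|^2$ contributes a term of order $\varepsilon^2\int_{1}^{1/\varepsilon}r^{-1}\,dr=O(\varepsilon^2|\ln\varepsilon|)$. On the outer region, the trivial bound $1-e^{-c'\varepsilon^2|y'|^2}\le 1$ combined with $(4+|y'|^2)^{-2}$ contributes $O\bigl(\int_{1/\varepsilon}^{\infty} r^{-3}\,dr\bigr)=O(\varepsilon^2)$. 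Adding the two yields $K_3-\|v_\varepsilon\|_{L^{4}_K(\mathbb{R}^{2})}^{4}=O(\varepsilon^2|\ln\varepsilon|)$.

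The main obstacle is precisely this boundary estimate: because the bubble $U_\varepsilon(x',0)^{4}$ decays only like $|y'|^{-4}$ after rescaling in the two-dimensional tangential variable, the naive Taylor expansion of $1-e^{-c'\varepsilon^2|y'|^2}$ produces a logarithmically divergent integral, and one must truncate at the natural scale $|y'|\sim 1/\varepsilon$ (where the exponential factor ceases to be well approximated by its first-order expansion) in order to capture the $|\ln\varepsilon|$ factor appearing in the statement. The volume integral has no such obstruction because in three ambient dimensions the same weight is strictly integrable.
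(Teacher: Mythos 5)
Your proposal is correct and follows essentially the same route as the paper: rescale by $y=x/\varepsilon$, subtract $K_2$ (resp. $K_3$), and for the boundary term split at $|y'|\sim 1/\varepsilon$, bounding the inner part by $\varepsilon^2\int_1^{1/\varepsilon}r^{-1}dr=O(\varepsilon^2|\ln\varepsilon|)$ and the outer tail by $O(\varepsilon^2)$, exactly as in the paper's treatment via Taylor expansion on $\hat B_{1/\varepsilon}$. The only (harmless) difference is in the volume term, where you use $0\le 1-e^{-t}\le t$ and integrability of $|y|^2(1+|y'|^2+|y_3+\sqrt3|^2)^{-3}$ over $\mathbb{R}^3_+$ to get $O(\varepsilon^2)$ in one stroke, whereas the paper splits and identifies the leading coefficient $-\bar d_4\varepsilon^2$; since the lemma only asserts $O(\varepsilon^2)$, your shortcut suffices.
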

\begin{proof}
By the definitions of $v_\varepsilon$ and $\psi$, we conclude
	\begin{align}
{\|v_{{\varepsilon}}\|}_{L^6_{K}({\mathbb{R}}^{3}_{+})}^{6} \notag
&=\int_{\mathbb{R}^{3}_{+}}K(x)v_{{\varepsilon}}^{6}dx \notag
=\int_{\mathbb{R}^{3}_{+}}K(x)^{-2}{\psi}^{6}U_{\varepsilon}^{6}dx \notag
=\int_{\mathbb{R}^{3}_{+}}e^{-\alpha_1|x|^2}U_{\varepsilon}^{6}dx\\ \notag
&=\int_{\mathbb{R}^{3}_{+}}U_{\varepsilon}^{6}dx+\int_{\mathbb{R}^{3}_{+}}\big(e^{-\alpha_1|x|^2}-1\big)U_{\varepsilon}^{6}dx\\ \label{p11}
&=K_{2}+\sqrt{27}\int_{\mathbb{R}^{3}_{+}}\frac {e^{-\alpha_1\varepsilon^2|y|^2}-1}
{(1+|y{'}|^{2}+|y_{3}+\sqrt{3}|^{2})^{3}}dy\\ \notag
&=K_2+\sqrt{27}\int_{B_{1/\varepsilon}^{+}}\frac{ e^{-\alpha_1{{\varepsilon}^{2}|y|^{2}}}-1}
{(1+|y{'}|^{2}+|y_{3}+\sqrt{3}|^{2})^{3}}dy\\  \notag
&\quad+\sqrt{27}\int_{\mathbb{R}^{3}_{+}\setminus B_{1/\varepsilon}^{+}}\frac{ e^{-\alpha_1{{\varepsilon}^{2}|y|^{2}}}-1}
{(1+|y{'}|^{2}+|y_{3}+\sqrt{3}|^{2})^{3}}dy, \notag
	\end{align}
where $\alpha_1:=\frac{1}{2}+\frac{3}{4\sqrt{5}}$.
From Taylor's formula, we have
\begin{equation*}\label{p0}
e^{-\alpha_1{\varepsilon}^{2}|y|^{2}}-1=-\alpha_1{\varepsilon}^{2}|y|^{2}+O({\varepsilon}^{4}|y|^4),\quad\quad
y\in B_{1/\varepsilon}^{+},
\end{equation*}
which means
\begin{equation}\label{p22}
  \begin{aligned}
\int_{B_{1/\varepsilon}^{+}}\frac{ e^{-\alpha_1{{\varepsilon}^{2}|y|^{2}}-1}}
{(1+|y{'}|^{2}+|y_{3}+\sqrt{3}|^{2})^{3}}dy
&=-\alpha_1{\varepsilon}^{2}\int_{B_{1/\varepsilon}^{+}}\frac{|y|^2}
{(1+|y{'}|^{2}+|y_{3}+\sqrt{3}|^{2})^{3}}dy\\
&\quad+O\Big(\varepsilon^4\int_{B_{1/\varepsilon}^{+}}\frac{ |y|^{4}}
{(1+|y{'}|^{2}+|y_{3}+\sqrt{3}|^{2})^{3}}dy\Big)\\
&=-\bar{d}_4{\varepsilon}^{2}+O\Big(\varepsilon^4\int_{B_{1/\varepsilon}^{+}\setminus B_{1}^+}\frac{ |y|^{4}}
{(1+|y{'}|^{2}+|y_{3}+\sqrt{3}|^{2})^{3}}dy\Big)\\
&\quad+O(\varepsilon^4),
  \end{aligned}
  \end{equation}
where $\bar{d}_4>0$. Note that
\begin{equation*}
\int_{B_{1/\varepsilon}^{+}\setminus B_{1}^+}\frac{ |y|^{4}}
{(1+|y{'}|^{2}+|y_{3}+\sqrt{3}|^{2})^{3}}dy
=O\Big(\int_{B_{1/\varepsilon}^{+}\setminus B_{1}^+}\frac{ 1}
{|y|^{2}}dy\Big)
=O\Big(\int_{1}^{{{1}/\varepsilon}}dr\Big)=O(\varepsilon^{-1}).
\end{equation*}
Combining the above with \eqref{p22}, there holds
\begin{equation}\label{p33}
\int_{B_{1/\varepsilon}^{+}}\frac{ e^{-\alpha_1{{\varepsilon}^{2}|y|^{2}}-1}}
{(1+|y{'}|^{2}+|y_{3}+\sqrt{3}|^{2})^{3}}dy
=-\bar{d}_4{\varepsilon}^{2}+o(\varepsilon^2).
  \end{equation}
Moreover,
\begin{equation}\label{p44}
  \begin{aligned}
\int_{\mathbb{R}^{3}_{+}\setminus B_{1/\varepsilon}^{+}}\frac{ e^{-\alpha_1{{\varepsilon}^{2}|y|^{2}}}-1}
{(1+|y{'}|^{2}+|y_{3}+\sqrt{3}|^{2})^{3}}dy
=O\Big(\int_{\mathbb{R}^{3}_{+}\setminus B_{1/\varepsilon}^{+}}\frac{ 1}
{|y|^6}dy\Big)
=O\Big(\int_{{{1}/\varepsilon}}^{\infty}r^{-4}dr\Big)
=O(\varepsilon^3).
  \end{aligned}
\end{equation}
It follows from \eqref{p11}, \eqref{p33} and \eqref{p44} that
\begin{equation*}
{\|v_{{\varepsilon}}\|}_{{L}_{K}^{6}({\mathbb{R}}^{3}_{+})}^{6}
=K_{2}+O( {\varepsilon}^{2}).
\end{equation*}

On the other hand, we can easily compute that
\begin{equation}\label{w10}
  \begin{aligned}
{\|v_{{\varepsilon}}\|}_{{L}_{K}^{4}({\mathbb{R}}^{2})}^{4}
&=\int_{\mathbb{R}^{2}}K(x{'},0)^{{-1}}{\psi}^{4}U^{4}_{\varepsilon}dx{'}
=\int_{\mathbb{R}^{2}}e^{-\alpha_2|x'|^2}U^{4}_{\varepsilon}dx{'}\\
&=\int_{\mathbb{R}^{2}}U^{4}_{\varepsilon}dx{'}+\int_{\mathbb{R}^{2}}(e^{-\alpha_2|x'|^2}-1)U^{4}_{\varepsilon}dx{'}\\
&=K_{3}+3\int_{\mathbb{R}^{2}}\frac{e^{-\alpha_2{{\varepsilon}^{2}|y'|^{2}}}-1}
{{{{(|y{'}|^{2}+4)^{2}}}}}dy{'}\\
&=K_{3}+3\int_{\hat{B}_{1/\varepsilon}}\frac{e^{-\alpha_2{\varepsilon}^{2}|y'|^{2}}-1}
{{{{(|y{'}|^{2}+4)^{2}}}}}dy{'}
+3\int_{\mathbb{R}^{2}\setminus \hat{B}_{1/\varepsilon}}\frac{e^{-\alpha_2{\varepsilon}^{2}|y'|^{2}}-1}
{{{{(|y{'}|^{2}+4)^{2}}}}}dy{'},
  \end{aligned}
\end{equation}
where $\alpha_2:=\frac{1}{4}+\frac{1}{2\sqrt{5}}$ and $\hat{B}_{r}:=\hat{B}_{r}(0)\subset {\mathbb{R}}^{2}$ for any $r>0$.
We derive from Taylor's formula that
\begin{equation}\label{w11}
  \begin{aligned}
\int_{\hat{B}_{1/\varepsilon}}\frac{ e^{-\alpha_2{\varepsilon}^{2}|y{'}|^{2}}-1}
{(|y{'}|^{2}+4)^{2}}dy{'}
&=-\alpha_2{\varepsilon^{2}}\int_{\hat{B}_{1/\varepsilon}}\frac{|y{'}|^{2}}
{(|y{'}|^{2}+4)^{2}}dy{'}
+O\Big(\varepsilon^4\int_{\hat{B}_{1/\varepsilon}}\frac{ |y{'}|^{4}}
{(|y{'}|^{2}+4)^{2}}dy{'}\Big)\\
&=-\alpha_2\varepsilon^{2}\int_{\hat{B}_{1/\varepsilon}\setminus \hat{B}_{1}}\frac{|y{'}|^{2}}
{(|y{'}|^{2}+4)^{2}}dy{'}
+O\Big(\varepsilon^{4}\int_{\hat{B}_{1/\varepsilon}\setminus \hat{B}_{1}}\frac{ |y{'}|^{4}}
{(|y{'}|^{2}+4)^{2}}dy{'}\Big)\\
&\quad+O(\varepsilon^{2})+O(\varepsilon^4).
  \end{aligned}
  \end{equation}
Since
\begin{equation*}
\int_{\hat{B}_{1/\varepsilon}\setminus \hat{B}_{1}}\frac{ |y{'}|^{2}}
{(|y{'}|^{2}+4)^{2}}dy{'}
=O\Big(\int_{\hat{B}_{2/\varepsilon}\setminus \hat{B}_{1}}\frac{ 1}
{|y{'}|^{2}}dy{'}\Big)
=O\Big(\int_{1}^{{{1}/\varepsilon}}r^{-1}dr\Big)
=O(|\ln\varepsilon|)
\end{equation*}
and
\begin{equation*}
\int_{\hat{B}_{1/\varepsilon}\setminus \hat{B}_{1}}\frac{ |y{'}|^{4}}
{(|y{'}|^{2}+4)^{2}}dy{'}
=O\Big(\int_{\hat{B}_{1/\varepsilon}\setminus \hat{B}_{1}}dy{'}\Big)
=O(\varepsilon^{-2}),
\end{equation*}
we conclude from \eqref{w11} that
\begin{equation}\label{w12}
\int_{\hat{B}_{1/\varepsilon}}\frac{ e^{-\alpha_2{\varepsilon}^{2}|y{'}|^{2}}-1}
{(|y{'}|^{2}+4)^{2}}dy{'}=O(\varepsilon^2|\ln\varepsilon|).
\end{equation}
Moreover,
\begin{equation}\label{w13}
\int_{\mathbb{R}^{2}\setminus \hat{B}_{1/\varepsilon}}\frac{e^{-\alpha_2{\varepsilon}^{2}|y'|^{2}}-1}
{{{{(|y{'}|^{2}+4)^{2}}}}}dy{'}=O\Big(\int_{\mathbb{R}^{2}\setminus \hat{B}_{1/\varepsilon}}\frac{ 1}
{|y{'}|^{4}}dy{'}\Big)
=O\Big(\int_{{{1}/\varepsilon}}^{\infty}r^{-3}dr\Big)
=O(\varepsilon^2).
\end{equation}
In view of \eqref{w10}, \eqref{w12} and \eqref{w13}, we have
\begin{equation*}
{\|v_{{\varepsilon}}\|}_{L^{4}_{K}({\mathbb{R}}^{2})}^{4}=
K_{3}+O({\varepsilon}^2|\ln\varepsilon|).
\end{equation*}
Therefore, we complete the proof.
\end{proof}

For convenience, we denote
\begin{align}
&K_{1}({\varepsilon}):={\|u_{{\varepsilon}}\|}^2,\quad\quad\quad\,\,\,\, \ \
K_{2}({\varepsilon}):={\|u_{{\varepsilon}}\|}_{L^{{2}^{*}}_{K}({\mathbb{R}}^{N}_{+})}^{2^{*}},\label{c7}\\
&K_{3}({\varepsilon}):={\|u_{{\varepsilon}}\|}_{L^{{2}_{*}}_{K}({\mathbb{R}}^{N-1})}^{2_{*}},
 \ \ K_{4}({\varepsilon}):={\|u_{{\varepsilon}}\|}_{L^{2}_{K}({\mathbb{R}}^{N}_{+})}^{2},\label{c8}\\
&\bar{K}_{1}({\varepsilon}):={\|v_{{\varepsilon}}\|}^2,\quad\quad\quad\,\,\,\, \ \
\bar{K}_{2}({\varepsilon}):={\|v_{{\varepsilon}}\|}_{L^{6}_{K}({\mathbb{R}}^{3}_{+})}^{6},\label{c9}\\
&\bar{K}_{3}({\varepsilon}):={\|v_{{\varepsilon}}\|}_{L^{4}_{K}({\mathbb{R}}^{2})}^{4},\,\,\,\,\,\,\, \ \
\bar{K}_{4}({\varepsilon}):={\|v_{{\varepsilon}}\|}_{L^{2}_{K}({\mathbb{R}}^{3}_{+})}^{2}.\label{c10}
\end{align}
 Now, we are ready to verify \eqref{3.0}.
\begin{Lem}\label{lem3.4}  The inequality (\ref {3.0}) holds 
if one of the following assumptions holds:
\vskip 0.2cm
	
	$( \romannumeral 1)$  $~N=3\ \text{and}~ \lambda>\frac{3+\sqrt{5}}{4}$;
\vskip 0.2cm
	
	$( \romannumeral 2)$  $~N=4 \ \text{and}~\lambda>1$;
	\vskip 0.2cm
	
	$( \romannumeral 3)$   $~N\geq5 \ \text{and}~\lambda>\lambda_N^*$,
~\text{where}
\begin{equation}\label{x9}
\lambda_N^*=\frac{\alpha_{N}}{d_{N}}+\frac{2}{{2^{*}}}\frac{\beta_{N}}{d_{N}}+\frac{2}{{2_{*}}}\frac{\gamma_{N}}{d_{N}},
\end{equation}
and $\alpha_{N}, \beta_N, \gamma_N, d_N$ are given by \eqref{a1}, \eqref{a2}, \eqref{a3}, \eqref{a4}.
\end{Lem}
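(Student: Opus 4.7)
The plan is to construct an explicit nonnegative competitor on which $\sup_{t>0}I^1_{\lambda, 0}(t\cdot)$ stays strictly below $A$. For $N\ge 4$ I would take the test function $u_\varepsilon$ from \eqref{b1}, and for $N=3$ the modified bubble $v_\varepsilon$ introduced just before Lemma \ref{lem3.2}. Writing $w_\varepsilon$ for whichever is in use, setting
\[ f_\varepsilon(t):=I^1_{\lambda, 0}(tw_\varepsilon)=\frac{t^2}{2}\bigl(\|w_\varepsilon\|^2-\lambda\|w_\varepsilon\|_{L^2_K(\mathbb{R}^N_+)}^2\bigr)-\frac{t^{2^*}}{2^*}\|w_\varepsilon\|_{L^{2^*}_K(\mathbb{R}^N_+)}^{2^*}-\frac{t^{2_*}}{2_*}\|w_\varepsilon\|_{L^{2_*}_K(\mathbb{R}^{N-1})}^{2_*}, \]
and denoting by $t_\varepsilon>0$ its unique maximiser, I would first verify that $t_\varepsilon\to 1$ as $\varepsilon\to 0$: at $\varepsilon=0$ the first-order condition becomes $K_1=t^{2^*-2}K_2+t^{2_*-2}K_3$, which by \eqref{2.7} has $t=1$ as its only positive solution, and the implicit function theorem applies since $(2^*-2)K_2+(2_*-2)K_3>0$.

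The key comparison is with the unperturbed function $g(t):=\frac{t^2}{2}K_1-\frac{t^{2^*}}{2^*}K_2-\frac{t^{2_*}}{2_*}K_3$, whose maximum value is exactly $A$, attained at $t=1$. Since $g(t_\varepsilon)\le g(1)=A$ unconditionally, one obtains
\[ \sup_{t>0}I^1_{\lambda, 0}(tw_\varepsilon)=f_\varepsilon(t_\varepsilon)\le A+[f_\varepsilon-g](t_\varepsilon), \]
so the whole proof reduces to showing $[f_\varepsilon-g](t_\varepsilon)<0$ for all sufficiently small $\varepsilon$. Using the shorthand \eqref{c7}--\eqref{c10}, this difference equals
\[ \frac{t^2}{2}\bigl(K_1(\varepsilon)-\lambda K_4(\varepsilon)-K_1\bigr)-\frac{t^{2^*}}{2^*}\bigl(K_2(\varepsilon)-K_2\bigr)-\frac{t^{2_*}}{2_*}\bigl(K_3(\varepsilon)-K_3\bigr) \]
at $t=t_\varepsilon$, and since $t_\varepsilon-1$ is of higher order than the dominant term in the coefficients, I may replace $t_\varepsilon$ by $1$ up to an acceptable remainder.

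Inserting the asymptotics of Lemmas \ref{lem2.8} and \ref{lem3.1} yields, for $N\ge 5$, the leading behaviour $\frac{d_N}{2}(\lambda_N^*-\lambda)\varepsilon^2+o(\varepsilon^2)$, which is negative exactly when $\lambda>\lambda_N^*$; for $N=4$ the leading behaviour is $\frac{k_4^2\omega_4}{4}(1-\lambda)\varepsilon^2|\ln\varepsilon|+O(\varepsilon^2)$, negative iff $\lambda>1$. For $N=3$, using Lemmas \ref{lem3.2} and \ref{lem3.3} for $v_\varepsilon$, the $L^2_K$--vs.--$X$--norm contribution is of order $\varepsilon$ rather than $\varepsilon^2$ and produces
\[ [f_\varepsilon-g](1)<\frac{\sqrt{3}\,\varepsilon}{2}\Bigl(\frac{3+\sqrt{5}}{4}-\lambda\Bigr)\int_{\mathbb{R}^3_+}\frac{\psi^2}{|x|^2}\,dx+O(\varepsilon^2|\ln\varepsilon|), \]
negative for $\lambda>(3+\sqrt{5})/4$. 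The main obstacle is precisely the three-dimensional case: Lemma \ref{lem2.8} only gives an $O(\varepsilon)$-level control of $\|u_\varepsilon\|^2$ for $N=3$, insufficient to exhibit cancellation against $\lambda\|u_\varepsilon\|_{L^2_K}^2$. This forces the replacement of the cut-off $\phi$ by the Gaussian weight $\psi=e^{-|x|^2/(8\sqrt{5})}$: the exponential localisation tempers the weight $K$ at first order in $\varepsilon$ and converts the boundary interaction into the convergent Hardy-type integral $\int_{\mathbb{R}^3_+}\psi^2/|x|^2\,dx$, whose coefficient determines the sharp threshold $\bar\lambda=(3+\sqrt{5})/4$.
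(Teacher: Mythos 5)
Your proposal is correct and takes essentially the same route as the paper: the same test functions $u_\varepsilon$ (for $N\ge 4$) and $v_\varepsilon$ (for $N=3$), the same asymptotic estimates from Lemmas \ref{lem2.8}, \ref{lem3.1}, \ref{lem3.2} and \ref{lem3.3}, and the same leading-order coefficients yielding the thresholds $1$, $\lambda_N^*$ and $\tfrac{3+\sqrt5}{4}$. The only difference is bookkeeping: you bound $g(t_\varepsilon)\le g(1)=A$ and evaluate $[f_\varepsilon-g]$ at $t=1$ (legitimate, since the replacement error is the product of the small coefficients of order $\varepsilon^2$, $\varepsilon^2|\ln\varepsilon|$ or $\varepsilon$ with $t_\varepsilon-1=o(1)$ — note $t_\varepsilon-1$ is actually of the same order as those coefficients, not higher, but this does not affect the conclusion), whereas the paper Taylor-expands $f_\varepsilon(t_\varepsilon)$ about $t=1$ and uses $K_1=K_2+K_3$ from \eqref{2.7} to cancel the term linear in $t_\varepsilon-1$; the resulting computations are identical.
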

	
\begin{proof}
For $N=3$, we
define the function
\begin{equation*}
\bar{f}_{{\varepsilon}}(t):=I^1_{\lambda, 0}(tv_{{\varepsilon}})=\frac{1}{2}\big(\bar{K}_{1}({\varepsilon})-\lambda \bar{K}_{4}({\varepsilon})\big)t^{2}
-\frac{\bar{K}_{2}({\varepsilon})}{6}t^{6}-\frac{\bar{K}_{3}({\varepsilon})}{4}t^{4},~t>0,
\end{equation*}
and call $\bar{t}_\varepsilon>0$ the point where it attains maximum value. The inequality (\ref {3.0}) holds if we verify that
\begin{equation}\label{zx0}
\sup\limits_{t>0}\bar{f}_{{\varepsilon}}(t)<A.
\end{equation}
Notice that $\bar{t}_\varepsilon$ satisfies
\begin{equation*}
\bar{K}_{1}({\varepsilon})-\lambda \bar{K}_{4}({\varepsilon})-
\bar{K}_{2}({\varepsilon})t^{4}-\bar{K}_{3}({\varepsilon})t^{2}=0.\notag
\end{equation*}
It follows from Lemma \ref{lem3.2}, Lemma \ref{lem3.3} and $K_{1}=K_{2}+K_{3}$ that
\begin{equation*}
\bar{t}_{{\varepsilon}}^{2}=\frac{-\bar{K}_{3}({\varepsilon})+\sqrt{\bar{K}_{3}^{2}({\varepsilon})
+4\bar{K}_{2}({\varepsilon})\big(\bar{K}_{1}({\varepsilon})-\lambda \bar{K}_{4}({\varepsilon})\big)}}{2\bar{K}_{2}({\varepsilon})}
=\frac{2\bar{K}_{2}+O(\varepsilon)}{2\bar{K}_{2}+O(\varepsilon^2)}=1+O({\varepsilon}),
\end{equation*}
which implies $\Delta{\bar{t}_{{\varepsilon}}}:={\bar{t}_{{\varepsilon}}}-1=O({\varepsilon})$.
From Taylor's formula, we conclude that for any $\tau>1$,
\begin{equation*}\label{03}
\bar{t}_{{\varepsilon}}^{\tau}=1+\tau\Delta{\bar{t}_{{\varepsilon}}}+O({{\varepsilon}}^{2}).
\end{equation*}
We derive from the above and \eqref{2.7} that
\begin{equation*}\label{zx1}
  \begin{aligned}
\bar{f}_{{\varepsilon}}(\bar{t}_{{\varepsilon}})
&=\frac{1}{2}\big(\bar{K}_{1}({\varepsilon})-\lambda \bar{K}_{4}({\varepsilon})\big)\bar{t}_{{\varepsilon}}^{2}
-\frac{\bar{K}_{2}({\varepsilon})}{6}\bar{t}_{{\varepsilon}}^{6}-
\frac{\bar{K}_{3}({\varepsilon})}{4}\bar{t}_{{\varepsilon}}^{4}\\
&<\frac{1}{2}\Big(K_1+\sqrt{3}\big(\frac{3+\sqrt{5}}{4}-\lambda\big)\varepsilon \int_{\mathbb{R}^{3}_{+}}
\frac{\psi^2}{|x|^{2}}dx\Big)\bar{t}_{{\varepsilon}}^{2}-\frac{K_{2}}{6}\bar{t}_{{\varepsilon}}^{6}-
\frac{K_{3}}{4}\bar{t}_{{\varepsilon}}^{4}+o(\varepsilon)\\
&=\frac{K_{1}}{2}+\frac{\sqrt{3}}{2}\Big(\frac{3+\sqrt{5}}{4}-\lambda\Big)\varepsilon \int_{\mathbb{R}^{3}_{+}}
\frac{\psi^2}{|x|^{2}}dx
-\frac{K_{2}}{6}
-\frac{K_{3}}{4}\\
& \ \ \ \ +(K_{1}-K_{2}-K_{3})\Delta{\bar{t}_{{\varepsilon}}}+o({{\varepsilon}})\\
&=A+\frac{\sqrt{3}}{2}\Big(\frac{3+\sqrt{5}}{4}-\lambda\Big)\varepsilon \int_{\mathbb{R}^{3}_{+}}
\frac{\psi^2}{|x|^{2}}dx+o({{\varepsilon}})<A,
  \end{aligned}
\end{equation*}
since $\lambda>\frac{3+\sqrt{5}}{4}$ and $\varepsilon$ is small. Thus, (\ref {zx0}) holds.

For $N\geq4$, we define
\begin{equation*}
f_{{\varepsilon}}(t):=I^1_{\lambda, 0}(tu_{{\varepsilon}})=\frac{1}{2}\big(K_{1}({\varepsilon})-\lambda K_{4}({\varepsilon})\big)t^{2}
-\frac{K_{2}({\varepsilon})}{2^{*}}t^{2^{*}}-\frac{K_{3}({\varepsilon})}{2_{*}}t^{2_{*}},~t>0.\notag
\end{equation*}
This means that we only need to show
\begin{equation}\label{zx10}
\sup\limits_{t>0}f_{{\varepsilon}}(t)<A.
\end{equation}
Let $t_{{\varepsilon}}$ be a positive constant such that
\begin{equation*}
f_{{\varepsilon}}(t_{{\varepsilon}})
=\sup\limits_{t>0}f_{{\varepsilon}}(t).
\end{equation*}
Then $t_{{\varepsilon}}$ satisfies
\begin{equation*}
K_{1}({\varepsilon})-\lambda K_{4}({\varepsilon})-
K_{2}({\varepsilon})t^{{2^{*}-2}}-K_{3}({\varepsilon})t^{{2_{*}}-2}=0.\notag
\end{equation*}
Since $2^{*}-2=2(2_{*}-2)$, we have
\begin{equation*}
t_{{\varepsilon}}^{2_{*}-2}=\frac{-K_{3}({\varepsilon})+\sqrt{K_{3}^{2}({\varepsilon})
+4K_{2}({\varepsilon})\big(K_{1}({\varepsilon})-\lambda K_{4}({\varepsilon})\big)}}{2K_{2}({\varepsilon})}.
\end{equation*}
In view of Lemma \ref{lem2.8}, Lemma \ref{lem3.1} and $K_{1}=K_{2}+K_{3}$, we deduce that
\begin{equation*}	
t_{{\varepsilon}}^{2_{*}-2}=
	\begin{cases}
1+O({{\varepsilon}}^{2}|{\ln}\varepsilon|),~&~N=4,\\
1+O({{\varepsilon}}^{2}),~&~N\geq5,
	\end{cases}
\end{equation*}
which means
\begin{equation*}	
\Delta{t_{{\varepsilon}}}:={t_{{\varepsilon}}}-1=
	\begin{cases}
O({{\varepsilon}}^{2}|{\ln}\varepsilon|),~&~N=4,\\
O({{\varepsilon}}^{2}),~&~N\geq5.
	\end{cases}
\end{equation*}
For $N=4$, we follow from Taylor's formula and \eqref{2.7} that for $\varepsilon$ sufficiently small,
\begin{equation}\label{zx2}
  \begin{aligned}
f_{{\varepsilon}}(t_{{\varepsilon}})
&=\frac{1}{2}\big(K_{1}({\varepsilon})-\lambda K_{4}({\varepsilon})\big)t_{{\varepsilon}}^{2}
-\frac{K_{2}({\varepsilon})}{4}t_{{\varepsilon}}^{4}-
\frac{K_{3}({\varepsilon})}{3}t_{{\varepsilon}}^{3}\\
&=\frac{1}{2}\Big(K_{1}+\frac{k_{4}^{2}\omega_4}{2}{{\varepsilon}}^{2}|\ln{\varepsilon}|\Big)t_{{\varepsilon}}^{2}-\frac{\lambda{k_{4}^{2}\omega_4}}{4}\varepsilon^2|\ln\varepsilon|t_{{\varepsilon}}^{2}
-\frac{K_{2}}{4}t_{{\varepsilon}}^{4}-\frac {K_{3}}{3}t_{{\varepsilon}}^{3}+
O({{\varepsilon}}^{2})\\
&=\frac{1}{2}\Big(K_{1}+\frac{k_{4}^{2}\omega_4}{2}{{\varepsilon}}^{2}|\ln{\varepsilon}|\Big)-\frac{\lambda{k_{4}^{2}\omega_4}}{4}\varepsilon^2|\ln\varepsilon|-\frac{K_{2}}{4}
-\frac{K_{3}}{3}\\
& \ \ \ \ +(K_{1}-K_{2}-K_{3})\Delta{t_{{\varepsilon}}}+O({{\varepsilon}}^{2})\\
&=A+\frac{k_{4}^{2}\omega_4}{4}(1-\lambda){{\varepsilon}}^{2}|\ln{\varepsilon}|+O({{\varepsilon}}^{2})
<A,
  \end{aligned}
\end{equation}
because $\lambda>1$. For $N\ge5$, we conclude that
  \begin{align}
f_{{\varepsilon}}(t_{{\varepsilon}})\nonumber
&=\frac{1}{2}\big(K_{1}({\varepsilon})-\lambda K_{4}({\varepsilon})\big)t_{{\varepsilon}}^{2}
-\frac{K_{2}({\varepsilon})}{2^{*}}t_{{\varepsilon}}^{2^{*}}-
\frac{K_{3}({\varepsilon})}{2_{*}}t_{{\varepsilon}}^{2_{*}}\\ \nonumber
&=\frac{1}{2}(K_{1}+\alpha_{N}{{\varepsilon}}^{2})t_{{\varepsilon}}^{2}-\frac{\lambda}{2}d_{N}{{\varepsilon}}^{2}t_{{\varepsilon}}^{2}
-\frac{1}{2^{*}}(K_{2}-\beta_{N}{{\varepsilon}}^{2})t_{{\varepsilon}}^{2^{*}}-\frac{1}{2_{*}}(K_{3}-\gamma_{N}{{\varepsilon}}^{2})t_{{\varepsilon}}^{2_{*}}+
o({{\varepsilon}}^{2})\\ \label{zx3}
&=\frac{1}{2}(K_{1}+\alpha_{N}{{\varepsilon}}^{2})-\frac{\lambda}{2}d_{N}{{\varepsilon}}^{2}-\frac{1}{2^{*}}(K_{2}-\beta_{N}{{\varepsilon}}^{2})
-\frac{1}{2_{*}}(K_{3}-\gamma_{N}{{\varepsilon}}^{2})\\ \nonumber
& \ \ \ \ +(K_{1}-K_{2}-K_{3})\Delta{{t}_{{\varepsilon}}}+o({{\varepsilon}}^{2})\\ \nonumber
&=A+\Big(\frac{\alpha_{N}}{2}-\frac{\lambda}{2}d_{N}+\frac{\beta_{N}}{2^{*}}+\frac{\gamma_{N}}{2_{*}}\Big){{\varepsilon}}^{2}+o({{\varepsilon}}^{2})<A,\nonumber
  \end{align}
for small $\varepsilon$ since
\begin{equation*}
\lambda>\frac{\alpha_{N}}{d_{N}}+\frac{2}{{2^{*}}}\frac{\beta_{N}}{d_{N}}+\frac{2}{{2_{*}}}\frac{\gamma_{N}}{d_{N}}.
\end{equation*}
Hence, \eqref{zx10} holds from \eqref{zx2} and \eqref{zx3}.
\end{proof}

Next, we give an upper and lower bounds of $\lambda_{N}^*$. To this end, we first recall that for any $\hat{a},\hat{b}>0$, the Beta function is defined by
\begin{equation*}\label{2.13}
B(\hat{a},\hat{b}):=\int_{0}^{\infty}\frac{r^{\hat{a}-1}}{(r+1)^{\hat{a}+\hat{b}}}dr,
\end{equation*}
and it also can be written as
\begin{equation*}\label{2.14}
B(\hat{a},\hat{b})=\frac{\Gamma(\hat{a})\Gamma(\hat{b})}{\Gamma(\hat{a}+\hat{b})},
\end{equation*}
where
$\Gamma(\hat{a}):=\int_{0}^{\infty}r^{\hat{a}-1}e^{-r}dr $
is the Gamma function. Moreover, $\Gamma(\hat{a})=(\hat{a}-1)\Gamma(\hat{a}-1)$. For simplicity, we denote
$\Gamma_{0}:=\Gamma\big(\frac{N-1}{2}\big)$.
\begin{Lem}\label{lem3.5}
For $N\ge 5$, we have
 $\lambda_{N}^*\in(\frac{N}{4}, \frac{N-2}{2})$, where $\lambda_{N}^*$ is given by \eqref{x9}.
\end{Lem}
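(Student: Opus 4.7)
The plan is to derive sufficiently explicit expressions for the four constants $\alpha_N$, $\beta_N$, $\gamma_N$, $d_N$ in terms of Beta functions, and then verify the two inequalities $\lambda_N^*>\frac{N}{4}$ and $\lambda_N^*<\frac{N-2}{2}$ by algebraic manipulation. Note first that
$$\lambda_N^*=\frac{\alpha_N}{d_N}+\frac{N-2}{N}\cdot\frac{\beta_N}{d_N}+\frac{N-2}{N-1}\cdot\frac{\gamma_N}{d_N},$$
since $\frac{2}{2^*}=\frac{N-2}{N}$ and $\frac{2}{2_*}=\frac{N-2}{N-1}$.

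First I would apply the translation $z=y+x_N^0 e_N$ to move the center of mass of each integrand to the origin, converting the defining integrals into integrals over $\{z_N>x_N^0\}$ (for $d_N$, $\alpha_N$, $\beta_N$) or over $\mathbb{R}^{N-1}$ with the weight $(1+|z'|^2+(x_N^0)^2)^{-\cdot}$ (for $\gamma_N$). The useful algebraic simplifications are $|y'|^2+y_N(y_N+x_N^0)=|z|^2-x_N^0 z_N$ in $\alpha_N$ and $|y|^2=|z|^2-2x_N^0 z_N+(x_N^0)^2$ in $\beta_N$. Then I would integrate out the $z'$-variable using polar coordinates together with the standard Beta function identity
$$\int_{\mathbb{R}^{N-1}}\frac{dz'}{(1+|z'|^2+z_N^2)^a}=(1+z_N^2)^{-a+\frac{N-1}{2}}\cdot\frac{\omega_{N-2}}{2}B\Big(\frac{N-1}{2},a-\frac{N-1}{2}\Big),$$
reducing everything to one-dimensional integrals of the type $J_m:=\int_{x_N^0}^{\infty}(1+z_N^2)^{-m/2}dz_N$ for various integer $m$.

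Next I would exploit simple auxiliary identities. The integral $\int_{z_N>x_N^0}z_N(1+|z|^2)^{-(N-1)}dz$ is computed exactly by antidifferentiation in $z_N$, producing a boundary term at $z_N=x_N^0$ that factors through $\int_{\mathbb{R}^{N-1}}(1+|z'|^2+(x_N^0)^2)^{-(N-2)}dz'$; the crucial numerical simplification here is that $1+(x_N^0)^2=\frac{2(N-1)}{N-2}$, which makes the boundary powers commensurate with those appearing in $\gamma_N$. An integration by parts in $z_N$ (differentiating $(1+z_N^2)^{1-m/2}$) relates $J_{m}$ and $J_{m-2}$, giving a recursion that lets one write every ratio $\alpha_N/d_N$, $\beta_N/d_N$, $\gamma_N/d_N$ as a rational combination of two basic $J$-values with coefficients depending explicitly on $N$ and $x_N^0$.

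Finally, having assembled $\lambda_N^*$ from these building blocks, I would verify the two strict inequalities. The upper bound $\lambda_N^*<\frac{N-2}{2}$ should follow by combining the reduced expressions and noting that the negative contributions coming from the $-x_N^0 z_N$ and $-2x_N^0 z_N+(x_N^0)^2$ pieces in $\alpha_N$ and $\beta_N$ dominate the positive ones, while the lower bound $\lambda_N^*>\frac{N}{4}$ comes from keeping the $(x_N^0)^2$ and $|z|^2$ parts and using monotonicity of the ratios $J_m/J_{m-2}$. The main obstacle will be the algebraic bookkeeping: ensuring that after all cancellations the sign of $\lambda_N^*d_N-\frac{N-2}{2}d_N$ (respectively $\lambda_N^*d_N-\frac{N}{4}d_N$) is manifestly negative (respectively positive) for every $N\geq 5$, which may require grouping terms carefully and invoking elementary monotonicity of $B(\frac{N-1}{2},\cdot)$ rather than attempting a closed-form evaluation of the residual $J_m$ integrals.
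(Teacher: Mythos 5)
Your setup (translating by $x_N^0 e_N$, integrating out the tangential variable with the Beta-function identity, exploiting $1+|x_N^0|^2=\frac{2(N-1)}{N-2}$, and noting that the integrals containing a factor $y_N+x_N^0$ can be antidifferentiated exactly) runs parallel to what the paper does with its constants $C_{1,N},\dots,C_{6,N}$. The gap is in the last step, which is the actual content of the lemma: you never establish the two strict inequalities, you only assert that the upper bound ``should follow'' because the negative contributions dominate, and that the lower bound comes from ``monotonicity of the ratios $J_m/J_{m-2}$.'' Neither claim is automatic. Your one-dimensional integrals $J_m=\int_{x_N^0}^{\infty}(1+t^2)^{-m/2}dt$ have lower limit $x_N^0$, so they are incomplete Beta-type integrals with no uniform closed form in $N$; the recursion you invoke leaves residual transcendental quantities, and a sign analysis ``manifest for every $N\ge5$'' does not drop out of generic monotonicity. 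In the paper the upper bound rests on an exact cancellation: only $C_{3,N}$ and $C_{6,N}$ are evaluated in closed form, and because $\frac{N-2}{2}x_N^0=\frac{\sqrt{N(N-2)}}{2}$ the combination $-\frac{N-2}{2}C_{6,N}+\frac{\sqrt{N(N-2)}}{4(N-1)}C_{3,N}$ is exactly $-\frac{\sqrt{N(N-2)}}{8(N-3)b^{N-3}}\,\omega_{N-1}\frac{\Gamma_0^2}{\Gamma(N-1)}<0$ (see \eqref{K00}); the $\beta_N$ term is first absorbed via the pointwise bound $C_{2,N}<C_{5,N}$. Your proposal contains no analogue of this identity or of the one-sided comparison, so the claimed dominance is unproved.

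The lower bound is even more delicate and is where a vague argument would fail: the estimate $\lambda_N^*>\frac N4$ is numerically tight in low dimensions, and the paper has to (i) estimate $C_{2,N}$ and $C_{4,N}$ from below by discarding the Jacobian factor $\frac{\alpha}{\sqrt{\alpha^2-1}}\ge1$ so that all errors go in the favorable direction, and (ii) treat $N\ge7$, $N=5$ and $N=6$ separately, replacing the crude comparison $C_{5,N}<C_{6,N}$ by the sharpened bounds $C_{5,5}<\frac35 C_{6,5}$ and $C_{5,6}<\frac23 C_{6,6}$, ending with explicit numerical verifications such as $\frac32-\frac{11\sqrt5}{80\sqrt2}+\frac1{32}>\frac54$. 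Without carrying out this quantitative bookkeeping (or an equivalent estimate valid for each $N\ge5$, including the borderline cases $N=5,6$), your outline does not prove $\lambda_N^*\in\big(\frac N4,\frac{N-2}{2}\big)$; it only reduces the problem to the very inequalities that constitute the lemma.
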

\begin{proof}
Let
\begin{align*}
&C_{1,N}=\int_{\mathbb{R}^{N}_{+}}\frac{|y{'}|^{2}+{y_{N}(y_{N}+x_{N}^{0})}}
{{{{(1+|y{'}|^{2}+|y_{N}+x_{N}^{0}|^{2})^{N-1}}}}}dy,\quad C_{2,N}=\int_{\mathbb{R}^{N}_{+}}\frac{|y|^{2}}
{{{{(1+|y{'}|^{2}+|y_{N}+x_{N}^{0}|^{2})^{N}}}}}dy,\\
&C_{3,N}=\int_{\mathbb{R}^{N-1}}\frac{|y{'}|^{2}}
{{{{(1+|y{'}|^{2}+|x_{N}^{0}|^{2})^{N-1}}}}}dy{'},\quad\quad  C_{4,N}=\int_{\mathbb{R}^{N}_{+}}
\frac{1}{{{(1+|y{'}|^{2}+|y_{N}+x_{N}^{0}|^{2})^{N-2}}}}dy,\\
&C_{5,N}=\int_{\mathbb{R}^{N}_{+}}\frac{1}
	{{{{(1+|y{'}|^{2}+|y_{N}+x_{N}^{0}|^{2})^{N-1}}}}}dy,\,\,\,\, C_{6,N}=\int_{\mathbb{R}^{N}_{+}}\frac{{x_{N}^{0}(y_{N}+x_{N}^{0})}} {{{{(1+|y{'}|^{2}+|y_{N}+x_{N}^{0}|^{2})^{N-1}}}}}dy.
\end{align*}

Firstly, we give an upper bound of $\lambda_{N}^*$.
Note that
\begin{equation*}
C_{1,N}=C_{4,N}-C_{5,N}-C_{6,N}.\label{x1}\\
\end{equation*}
Combining \eqref{a1}, \eqref{a2}, \eqref{a4} with the above, there holds
\begin{equation}\label{ww1}
\frac{\alpha_N}{d_N}=\frac{N-2}{2}\Big(1-\frac{C_{5,N}}{C_{4,N}}-\frac{C_{6,N}}{C_{4,N}}\Big)
\end{equation}
and
\begin{equation}\label{ww2}
\frac{2}{{2^{*}}}\frac{\beta_N}{d_N}=\frac{N-2}{2}\frac{C_{2,N}}{C_{4,N}}
<\frac{N-2}{2}\frac{C_{5,N}}{C_{4,N}}.
\end{equation}
On the other hand, in view of \eqref{a3} and \eqref{a4}, we have
\begin{equation}\label{ww3}
\frac{2}{{2_{*}}}\frac{\gamma_{N}}{d_{N}}
=\frac{\sqrt{N(N-2)}}{4(N-1)}\frac{C_{3,N}}{C_{4,N}}.
\end{equation}
Thus,
\begin{equation}\label{a555}
\frac{\alpha_{N}}{d_{N}}+\frac{2}{{2^{*}}}\frac{\beta_{N}}{d_{N}}+\frac{2}{{2_{*}}}\frac{\gamma_{N}}{d_{N}}
<\frac{N-2}{2}\Big(1-\frac{C_{6,N}}{C_{4,N}}\Big)+\frac{\sqrt{N(N-2)}}{4(N-1)}\frac{C_{3,N}}{C_{4,N}}.
\end{equation}
We now proceed with the computation of the integrals $C_{3,N}$ and $C_{6,N}$.
For the first one,
by changing of variables $z{'}={y{'}}/b$ and using the polar coordinates, we obtain
\begin{equation}\label{a666}
  \begin{aligned}
C_{3,N}&=\frac{1}{b^{N-3}}\int_{\mathbb{R}^{N-1}}\frac{|z{'}|^{2}}{(1+|z{'}|^{2})^{N-1}}dz{'}=\frac{{\omega_{N-1}}}{b^{N-3}}\int_{0}^{\infty}\frac{{\rho}^{N}}{(1+{\rho}^{2})^{N-1}}d{\rho}\\
&=\frac{{\omega_{N-1}}}{2{b^{N-3}}}\int_{0}^{\infty}\frac{{r}^{\frac{N-1}{2}}}{(1+r)^{N-1}}dr=\frac{{\omega_{N-1}}}{2{b^{N-3}}}B\Big(\frac{N+1}{2},\frac{N-3}{2}\Big)\\
&=\frac{(N-1)\omega_{N-1}}{2(N-3)b^{N-3}}{\frac{\Gamma_{0}^2}{\Gamma({N-1})}},
  \end{aligned}
\end{equation}
where
\begin{equation}\label{xxx2}
b=\sqrt{1+|x_{N}^{0}|^{2}}=\sqrt{\frac{2N-2}{N-2}}
\end{equation}
and $\omega_{N-1}$
is the area of unit sphere in $\mathbb{R}^{N-1}$.
For the second one, we use the Fubini Theorem and Cylindrical coordinates to obtain
  \begin{align}
C_{6,N}
&=\int_{0}^{\infty}
\int_{\mathbb{R}^{N-1}}\notag
\frac{{x_{N}^{0}(y_{N}+x_{N}^{0})}}{{{{(1+|y{'}|^{2}+|y_{N}+x_{N}^{0}|^{2})^{N-1}}}}}dy{'}dy_{N}\\ \notag
&=x_{N}^{0}\omega_{N-1}\int_{0}^{\infty}\int_{0}^{\infty}\frac{{r^{N-2}(y_{N}+x_{N}^{0})}} {{{{(1+r^{2}+|y_{N}+x_{N}^{0}|^{2})^{N-1}}}}}drdy_{N}\\ \notag
&=x_{N}^{0}\omega_{N-1}\int_{b}^{\infty}\int_{0}^{\infty}\frac{{r^{N-2}\alpha}} {{{{(\alpha^2+r^{2})^{N-1}}}}}drd\alpha\quad
\quad\bigg(\alpha=\sqrt{1+|y_{N}+x_{N}^{0}|^2}\bigg)\\ \notag
&=x_{N}^{0}\omega_{N-1}\Big(\int_{b}^{\infty}\frac{1} {\alpha^{N-2}}d\alpha\Big)\Big(\int_{0}^{\infty}\frac{{\rho^{N-2}}} {(1+\rho^2)^{N-1}}d\rho\Big)\quad\quad(\rho={r}/{\alpha})\\ \label{a777}
&=\frac{x_{N}^{0}\omega_{N-1}}{(N-3)b^{N-3}}\int_{0}^{\infty}\frac{{\rho^{N-2}}} {(1+\rho^2)^{N-1}}d\rho\\ \notag
&=\frac{x_{N}^{0}\omega_{N-1}}{2(N-3)b^{N-3}}\int_{0}^{\infty}\frac{{\sigma^{\frac{N-3}{2}}}} {(1+\sigma)^{N-1}}d\sigma\quad\quad(\sigma=\rho^2)\\ \notag
&=\frac{x_{N}^{0}\omega_{N-1}}{2(N-3)b^{N-3}}
B\Big(\frac{N-1}{2},\frac{N-1}{2}\Big)\\ \notag
&=\frac{x_{N}^{0}\omega_{N-1}}{2(N-3)b^{N-3}}{\frac{\Gamma_{0}^2}{\Gamma({N-1})}},\notag
\end{align}
where $b$ is given by \eqref{xxx2}.
It follows from \eqref{2.02}, \eqref{a555}, \eqref{a666} and \eqref{a777} that
\begin{equation}\label{K00}
\lambda_{N}^*
<\frac{N-2}{2}-\frac{\sqrt{N(N-2)}\omega_{N-1}}{8(N-3)b^{N-3}C_{4,N}}{\frac{\Gamma_{0}^2}{\Gamma({N-1})}}
<\frac{N-2}{2}.
\end{equation}

Next, we give a lower bound of $\lambda_{N}^*$. In view of  \eqref{ww1} and \eqref{ww2}, we conclude
\begin{equation*}
\frac{\alpha_{N}}{d_{N}}+\frac{2}{{2^{*}}}\frac{\beta_{N}}{d_{N}}
=\frac{N-2}{2}\Big(1-\frac{C_{5,N}}{C_{4,N}}-\frac{C_{6,N}}{C_{4,N}}\Big)
+\frac{N-2}{2}\frac{C_{2,N}}{C_{4,N}}.
\end{equation*}
Since $|x_{N}^{0}|^2>1$, we have $C_{5,N}<C_{6,N}$, and then
\begin{equation}\label{K1}
\frac{\alpha_{N}}{d_{N}}+\frac{2}{{2^{*}}}\frac{\beta_{N}}{d_{N}}
>\frac{N-2}{2}-(N-2)\frac{C_{6,N}}{C_{4,N}}
+\frac{N-2}{2}\frac{C_{2,N}}{C_{4,N}}.
\end{equation}
From \eqref{2.02} and \eqref{a777}, we obtain
\begin{equation}\label{K2}
\frac{C_{6,N}}{C_{4,N}}
=\frac{\sqrt{N}\omega_{N-1}}{2\sqrt{N-2}(N-3)b^{N-3}C_{4,N}}{\frac{\Gamma_{0}^2}{\Gamma({N-1})}}.
\end{equation}
On the other hand, from the definition of $C_{2,N}$, one has
  \begin{align}
C_{2,N}\notag
&>\int_{0}^{\infty}
\int_{\mathbb{R}^{N-1}}\frac{|y'|^2} {{{{(1+|y{'}|^{2}+|y_{N}+x_{N}^{0}|^{2})^{N}}}}}dy{'}dy_{N}\\ \notag
&=\omega_{N-1}\int_{0}^{\infty}\int_{0}^{\infty}\frac{r^{N}} {{{{(1+r^{2}+|y_{N}+x_{N}^{0}|^{2})^{N}}}}}drdy_{N}\\ \notag
&=\omega_{N-1}\int_{b}^{\infty}\int_{0}^{\infty}\frac{{r^{N}}} {{{{(\alpha^2+r^{2})^{N}}}}}\frac {\alpha}{\sqrt{\alpha^2-1}}drd\alpha\quad\quad\bigg(\alpha=\sqrt{1+|y_{N}+x_{N}^{0}|^2}\bigg)\\ \notag
&>\omega_{N-1}\int_{b}^{\infty}\int_{0}^{\infty}\frac{{r^{N}}} {{{{(\alpha^2+r^{2})^{N}}}}}drd\alpha\\  \label{K3}
&=\omega_{N-1}\Big(\int_{b}^{\infty}\frac{1} {\alpha^{N-1}}d\alpha\Big)\Big(\int_{0}^{\infty}\frac{{\rho^{N}}} {(1+\rho^2)^{N}}d\rho\Big)\quad\quad(\rho={r}/{\alpha})\\ \notag
&=\frac{\omega_{N-1}}{2(N-2)b^{N-2}}\int_{0}^{\infty}\frac{{\sigma^{\frac{N-1}{2}}}} {(1+\sigma)^N}d\sigma\quad\quad(\sigma=\rho^2)\\ \notag
&=\frac{\omega_{N-1}}{2(N-2)b^{N-2}}B\Big(\frac{N+1}{2},\frac{N-1}{2}\Big)\\ \notag
&=\frac{\omega_{N-1}}{4(N-2)b^{N-2}}{\frac{\Gamma_{0}^2}{\Gamma({N-1})}}, \notag
\end{align}
where $b$ is given by \eqref{xxx2}.
It follows from \eqref{ww3}, \eqref{a666} and \eqref{K1}-\eqref{K3} that
\begin{equation}\label{K4}
  \begin{aligned}
\frac{\alpha_{N}}{d_{N}}+\frac{2}{{2^{*}}}\frac{\beta_{N}}{d_{N}}+\frac{2}{{2_{*}}}\frac{\gamma_{N}}{d_{N}}
&>\frac{N-2}{2}-\frac{3\sqrt{N(N-2)}\omega_{N-1}}{8(N-3)b^{N-3}C_{4,N}}{\frac{\Gamma_{0}^2}{\Gamma({N-1})}}
+\frac{\omega_{N-1}}{8b^{N-2}C_{4,N}}{\frac{\Gamma_{0}^2}{\Gamma({N-1})}}\\
&=\frac{N-2}{2}-\Big(\frac{3\sqrt{N(N-2)}}{8(N-3)b^{N-3}}-\frac{1}{{8b^{N-2}}}\Big)
\frac{\omega_{N-1}}{C_{4,N}}
{\frac{\Gamma_{0}^2}{\Gamma({N-1})}}.
\end{aligned}
\end{equation}
Note that
  \begin{align}
C_{4,N}\notag
&=\omega_{N-1}\int_{0}^{\infty}
\int_{0}^{\infty}\frac{r^{N-2}}{{{{(1+r^{2}+|y_{N}+x_{N}^{0}|^{2})^{N-2}}}}}drdy_{N}\\ \notag
&=\omega_{N-1}\int_{b}^{\infty}\int_{0}^{\infty}\frac{{r^{N-2}}} {{{{(\alpha^2+r^{2})^{N-2}}}}}\frac {\alpha}{\sqrt{\alpha^2-1}}drd\alpha\quad\quad\bigg(\alpha=\sqrt{1+|y_{N}+x_{N}^{0}|^2}\bigg)\\ \notag
&>\omega_{N-1}\int_{b}^{\infty}\int_{0}^{\infty}\frac{{r^{N-2}}} {{{{(\alpha^2+r^{2})^{N-2}}}}}drd\alpha\\ \notag
&=\omega_{N-1}\Big(\int_{b}^{\infty}\frac{1} {\alpha^{N-3}}d\alpha\Big)\Big(\int_{0}^{\infty}\frac{{\rho^{N-2}}} {(1+\rho^2)^{N-2}}d\rho\Big)\quad\quad(\rho={r}/{\alpha})\\ \notag
&=\frac{\omega_{N-1}}{2(N-4)b^{N-4}}\int_{0}^{\infty}\frac{{\sigma^{\frac{N-3}{2}}}} {(1+\sigma)^{N-2}}d\sigma\quad\quad(\sigma=\rho^2)\\ \label{K5}
&=\frac{\omega_{N-1}}{2(N-4)b^{N-4}}B\Big(\frac{N-1}{2},\frac{N-3}{2}\Big)\\ \notag
&=\frac{(N-2)\omega_{N-1}}{(N-3)(N-4)b^{N-4}}{\frac{\Gamma_{0}^2}{\Gamma({N-1})}},\notag
\end{align}
where $b$ is given by \eqref{xxx2}.
Combining \eqref{K4} with \eqref{K5}, we deduce that
\begin{equation}\label{K44}
  \begin{aligned}
\lambda_{N}^*
&>\frac{N-2}{2}-\frac{3\sqrt{N}(N-4)}{8\sqrt{2N-2}}+\frac{(N-3)(N-4)}{8(2N-2)}>\frac{N}{4}
\end{aligned}
\end{equation}
for $N\geq7$. For $N=5$, we have $C_{5,5}<\frac{3}{5}C_{6,5}$ because $|x_{5}^{0}|^2=\frac{5}{3}$. Then
\begin{equation}\label{K7}
\frac{\alpha_{5}}{d_{5}}+\frac{3}{5}\frac{\beta_{5}}{d_{5}}
>\frac{3}{2}-\frac{12}{5}\frac{C_{6,5}}{C_{4,5}}
+\frac{3}{2}\frac{C_{2,5}}{C_{4,5}}.
\end{equation}
From \eqref{ww3}, \eqref{a666}, \eqref{K2}, \eqref{K3} and \eqref{K7}, we conclude
\begin{equation}\label{K8}
\frac{\alpha_{5}}{d_{5}}+\frac{3}{5}\frac{\beta_{5}}{d_{5}}+\frac{3}{4}\frac{\gamma_{5}}{d_{5}}
>\frac{3}{2}-\Big(\frac{11\sqrt{15}}{80b^{2}}-\frac{1}{{8b^{3}}}\Big)
\frac{\omega_{4}}{C_{4,5}}
{\frac{\Gamma_{0}^2}{\Gamma({4})}}.
\end{equation}
It follows from \eqref{K5} and \eqref{K8} that
\begin{equation}\label{K19}
\lambda_{5}^*
>\frac{3}{2}-\frac{11\sqrt{5}}{80\sqrt{2}}+\frac{1}{32}>\frac{5}{4}.
\end{equation}
For $N=6$, since $|x_{6}^{0}|^2=\frac{3}{2}$, we conclude $C_{5,6}<\frac{2}{3}C_{6,6}$. Thus,
\begin{equation}\label{K11}
\frac{\alpha_{6}}{d_{6}}+\frac{2}{3}\frac{\beta_{6}}{d_{6}}
>2-\frac{10}{3}\frac{C_{6,6}}{C_{4,6}}
+2\frac{C_{2,6}}{C_{4,6}}.
\end{equation}
From \eqref{ww3}, \eqref{a666}, \eqref{K2}, \eqref{K3} and \eqref{K11}, we get
\begin{equation}\label{K12}
\frac{\alpha_{6}}{d_{6}}+\frac{2}{3}\frac{\beta_{6}}{d_{6}}
+\frac{4}{5}\frac{\gamma_{6}}{d_{6}}
>2-\Big(\frac{7\sqrt{6}}{36b^{3}}-\frac{1}{{8b^{4}}}\Big)
\frac{\omega_{5}}{C_{4,6}}
{\frac{\Gamma_{0}^2}{\Gamma({5})}}.
\end{equation}
Then we obtain
\begin{equation}\label{K13}
\lambda_{6}^*
>2-\frac{7\sqrt{3}}{12\sqrt{5}}+\frac{3}{40}>\frac{3}{2}
\end{equation}
from \eqref{K5} and \eqref{K12}. Therefore, it follows from \eqref{K00}, \eqref{K44}, \eqref{K19} and \eqref{K13} that for $N\geq 5$, $\frac{N}{4}<\lambda_{N}^*<\frac{N-2}{2}$.
\end{proof}

\subsection{Verification of the condition (4.1) when $\lambda\geq0$ and $\mu>0$ }\label{S3.2}

In this subsection, we are in a position to check condition \eqref{3.0} under the assumptions $(2)$-$(3)$ in Theorem \ref{Th1.1}. It is clear that the energy function associated to equation \eqref{1.5} with $ \lambda\geq0, a=1$, $\mu>0$ and $2\leq q<2_*$ is
\begin{equation}
	I^1_{\lambda, \mu}(u):={\frac{1}{2}}{\|{u}\|}^{2}-{\frac{\lambda }{2}}\|u_+\|_{L^{{2}}_{K}({\mathbb{R}}^{N}_{+})}^{2}
-{\frac{1 }{2^{*}}}\|{u_{+}}\|_{L^{{2}^{*}}_{K}({\mathbb{R}}^{N}_{+})}^{2^{*}}
-{\frac{1 }{2_{*}}}\|{u_{+}}\|_{L^{{2}_{*}}_{K}({\mathbb{R}}^{N-1})}^{2_{*}}-
{\frac{\mu}{q}}\|{u_{+}}\|_{L^{q}_{K}({\mathbb{R}}^{N-1})}^{q}.\notag
\end{equation}

In the following, we give a fine estimate of the ${L^{q}_{K}({\mathbb{R}}^{N-1})}$-norm of $u_{\varepsilon}$ as  $\varepsilon\rightarrow0$. 
\begin{Lem}\label{lem3.6}
Suppose that  $q\in [2,2_{*})$ and $\theta _{N}=N-1-\frac{(N-2)q}{2}$. As $\varepsilon\rightarrow0$, there holds
\begin{equation}\label{c0}
{\|u_{{\varepsilon}}\|}_{L^{q}_{K}({\mathbb{R}}^{N-1})}^{q}
\geq
\begin{cases}
b_1{\varepsilon}^{\theta_N}+o({\varepsilon}^{\theta_N}),~&~~N\geq3,~2< q<2_*,\\[1mm]
b_1{\varepsilon}+o({\varepsilon}),~&~~N\geq4,~q=2,\\[1mm]
b_2{\varepsilon}|\ln{\varepsilon}|+b_3{\varepsilon},~&~~N=3,~q=2,\\[1mm]
\end{cases}
\end{equation}
where $b_{i}, i=1, 2, 3$ are positive constants independent of $\varepsilon$.
\end{Lem}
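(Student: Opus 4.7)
The plan is to substitute the definition $u_\varepsilon(x) = K(x)^{-1/2}\phi(x)U_\varepsilon(x)$ directly into the boundary $L^q_K$-norm so that
\[
\|u_\varepsilon\|_{L^q_K(\mathbb{R}^{N-1})}^{q}
= \int_{\mathbb{R}^{N-1}} K(x',0)^{1-q/2}\,\phi(x',0)^{q}\,U_\varepsilon(x',0)^{q}\,dx',
\]
and then perform the scaling $y' = x'/\varepsilon$. Using the explicit form of $U_\varepsilon$, this rescaling pulls out a clean power of $\varepsilon$ and converts the remaining integral into
\[
\|u_\varepsilon\|_{L^q_K(\mathbb{R}^{N-1})}^{q}
= k_N^{q}\,\varepsilon^{\theta_N}\!\int_{\mathbb{R}^{N-1}}
e^{-\frac{q-2}{8}\varepsilon^{2}|y'|^{2}}\,\phi(\varepsilon y',0)^{q}\,
\bigl(1+|y'|^{2}+|x_N^0|^{2}\bigr)^{-(N-2)q/2}\,dy'.
\]
Since $\phi\equiv 1$ on $B_1^+$, the cutoff factor equals $1$ on $\{|y'|\leq 1/\varepsilon\}$, and on this region (or its subsets) we can freely drop the cutoff to obtain the desired lower bounds.

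Next, I would split into the three cases by asking whether the integrand $(1+|y'|^2+|x_N^0|^2)^{-(N-2)q/2}$ is integrable on $\mathbb{R}^{N-1}$, which holds exactly when $(N-2)q>N-1$. For $2<q<2_*$ with any $N\geq 3$, and for $q=2$ with $N\geq 4$, integrability holds: fix $R>0$, restrict the lower bound to $\{|y'|\leq R\}$ where the Gaussian factor tends to $1$ uniformly as $\varepsilon\to 0$, apply Fatou (or dominated convergence), and then send $R\to\infty$ to conclude
\[
\|u_\varepsilon\|_{L^q_K(\mathbb{R}^{N-1})}^{q}\geq b_1\,\varepsilon^{\theta_N}+o(\varepsilon^{\theta_N}),
\qquad b_1=k_N^{q}\!\int_{\mathbb{R}^{N-1}}\!\bigl(1+|y'|^{2}+|x_N^0|^{2}\bigr)^{-(N-2)q/2}dy'>0.
\]
Note $\theta_N>0$ under $q<2_*$, and in the subcase $q=2,\,N\geq 4$ we have $\theta_N=1$, matching the stated form.

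The only subtle case is $N=3$, $q=2$, where $\theta_3=1$ but the integrand $(1+|y'|^2+|x_3^0|^2)^{-1}$ is not integrable on $\mathbb{R}^{2}$; here the right-hand side diverges logarithmically, producing the $|\ln\varepsilon|$ behaviour. I would compute this one directly: for $q=2$, $K(x',0)^{1-q/2}=1$, so after rescaling
\[
\|u_\varepsilon\|_{L^2_K(\mathbb{R}^{2})}^{2}\geq k_3^{2}\,\varepsilon\!\int_{|y'|\leq 1/\varepsilon}\!\frac{dy'}{1+|y'|^{2}+|x_3^0|^{2}}
=k_3^{2}\,\pi\,\varepsilon\Bigl[\ln\!\bigl(1+\varepsilon^{-2}+|x_3^0|^{2}\bigr)-\ln(1+|x_3^0|^{2})\Bigr]
\]
by passing to polar coordinates; this expression is of the form $2\pi k_3^{2}\varepsilon|\ln\varepsilon|+O(\varepsilon)$, giving $b_2,b_3$ as desired.

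The only real obstacle is the borderline case $N=3,\,q=2$, because the naive argument via the limit integral fails; however, the explicit polar-coordinate computation above handles it cleanly. All other cases reduce to a standard rescaling plus Fatou/dominated-convergence lower bound, with the positive constant $b_1$ coming from the convergent improper integral. No further delicate cancellation is involved, since only a lower bound is required.
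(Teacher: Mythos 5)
Your proposal is correct, and it starts exactly as the paper does (write the boundary norm with the weight $K(x',0)^{1-q/2}$, insert the explicit form of $U_\varepsilon$, and rescale $y'=x'/\varepsilon$ to pull out $\varepsilon^{\theta_N}$), but the main estimate is carried out differently. The paper replaces the weight by the crude uniform bound $K(x',0)^{1-q/2}\ge e^{-(q-2)/2}$ on $|x'|\le 2$, restricts to the unit ball of the boundary, and splits the rescaled integral into $\hat B_1$ plus the annulus $\hat B_{1/\varepsilon}\setminus \hat B_1$, where the kernel is bounded below by a multiple of $|y'|^{-(N-2)q}$ and the radial integral is evaluated in three subcases according to whether $q$ is below, equal to, or above $\frac{N-1}{N-2}$; the borderline subcase (which within the admissible range is precisely $N=3$, $q=2$) produces the logarithm. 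You instead keep the exact Gaussian factor after rescaling and, in the integrable cases $(N-2)q>N-1$, pass to the limit by Fatou/dominated convergence, which gives the sharp limiting constant $b_1=k_N^q\int_{\mathbb{R}^{N-1}}(1+|y'|^2+|x_N^0|^2)^{-(N-2)q/2}dy'$ with no case distinction in $q$, while the borderline case is computed explicitly in polar coordinates, much as the paper does in spirit. Your route is slightly cleaner for the convergent cases; the paper's route gives elementary explicit constants and also covers the (here vacuous) regime $q<\frac{N-1}{N-2}$. One small point to patch: in the $N=3$, $q=2$ case your explicit lower bound reads $2\pi k_3^2\varepsilon|\ln\varepsilon|-\pi k_3^2\ln\bigl(1+|x_3^0|^2\bigr)\varepsilon$, so the coefficient of $\varepsilon$ is negative; to land on the stated form $b_2\varepsilon|\ln\varepsilon|+b_3\varepsilon$ with $b_3>0$ you should add the one-line absorption $2\pi k_3^2\varepsilon|\ln\varepsilon|-C\varepsilon\ge \pi k_3^2\varepsilon|\ln\varepsilon|+b_3\varepsilon$ for $\varepsilon$ small, which is immediate since $\varepsilon|\ln\varepsilon|$ dominates $\varepsilon$ (the paper sidesteps this by keeping the separate positive contribution $\bar b_1\varepsilon^{\theta_N}$ coming from the unit ball).
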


\begin{proof}
For any $|x'|\leq 2$, we have $K(x',0)^{1-\frac{q}{2}}\geq e^{-\frac{q-2}{2}}>0$ since $q\in [2, 2_{*})$.
By the definitions of $u_{{\varepsilon}}$ and ${\phi}$, we get
\begin{align}
{\|u_{{\varepsilon}}\|}_{L^{q}_{K}({\mathbb{R}}^{N-1})}^q \notag
&=\int_{\mathbb{R}^{N-1}}K(x',0)u_{\varepsilon}^qdx'
=\int_{\mathbb{R}^{N-1}}\frac{K(x',0)^{1-\frac{q}{2}}\phi(x',0)^{q}k_{N}^{q}\varepsilon^{\frac{(N-2)q}{2}}}{(\varepsilon^{2}+|x'|^2+|\varepsilon x_{N}^{0}|^{2})^{\frac{(N-2)q}{2}}}
dx'\\ \notag
&\geq e^{-\frac{q-2}{2}}k_{N}^{q}\varepsilon^{\frac{(N-2)q}{2} } \int_{\hat{B}_{2}} \frac{\phi(x',0)^{q}}{(\varepsilon^{2}+|x'|^{2}+|\varepsilon x_{N}^{0}|^{2})^{\frac{(N-2)q}{2}}} dx' \\ \notag
&\geq e^{-\frac{q-2}{2}} k_{N}^{q}\varepsilon^{\frac{(N-2)q}{2}}\int_{\hat{B}_{1}} \frac{1}{(\varepsilon^{2}+|x'|^{2}+|\varepsilon x_{N}^{0}|^{2})^{\frac{(N-2)q}{2}}} dx'\\ \label{c1}
&=e^{-\frac{q-2}{2}} k_{N}^{q}\varepsilon^{\theta _{N}}\int_{\hat{B}_{1/\varepsilon}} \frac{1}{(1+|y'|^{2}+| x_{N}^{0}|^{2})^{\frac{(N-2)q}{2}}} dy'\\  \notag
&=e^{-\frac{q-2}{2}} k_{N}^{q}\varepsilon^{\theta _{N}}\int_{\hat{B}_{1}} \frac{1}{(1+|y'|^{2}+|x_{N}^{0}|^{2})^{\frac{(N-2)q}{2}}} dy'\\ \notag
&\quad +e^{-\frac{q-2}{2}} k_{N}^{q}\varepsilon^{\theta _{N}}\int_{\hat{B}_{1/\varepsilon} \setminus \hat{B}_{1}} \frac{1}{(1+|y'|^{2}+|x_{N}^{0}|^{2})^{\frac{(N-2)q}{2}}} dy'\\ \notag
&=\bar{b}_1\varepsilon^{\theta _{N}}+e^{-\frac{q-2}{2}} k_{N}^{q}\varepsilon^{\theta _{N}}\int_{\hat{B}_{1/\varepsilon} \setminus \hat{B}_{1}} \frac{1}{(1+|y'|^{2}+|x_{N}^{0}|^{2})^{\frac{(N-2)q}{2}}} dy', \notag
\end{align}
where $\hat{B}_{r}=\hat{B}_{r}(0)\subset {\mathbb{R}}^{N-1}$ for any $r>0$, $\theta _{N}=N-1-\frac{(N-2)q}{2}$ and $\bar{b}_1>0$. Since $|x_{N}^{0}|^{2}\leq3$ for $N\geq3$, we have
\begin{equation*}
\frac{1}{(1+|y'|^{2}+|x_{N}^{0}|^{2})^{\frac{(N-2)q}{2}}}\geq
\frac{1}{(5
|y'|^2)^{\frac{(N-2)q}{2}}}, \ \ y'\in \hat{B}_{1/\varepsilon}\setminus \hat{B}_{1}.
\end{equation*}
Combining the above with \eqref{c1}, there holds
\begin{equation*}	
\begin{aligned}
{\|u_{{\varepsilon}}\|}_{L^{q}_{K}({\mathbb{R}}^{N-1})}^q
&\geq \bar{b}_1\varepsilon^{\theta _{N}}+\bar{b}_2\varepsilon^{\theta _{N}}\int_{\hat{B}_{1/\varepsilon} \setminus \hat{B}_{1}} \frac{1}{|y'|^{(N-2)q}} dy'\\
&=\bar{b}_1\varepsilon^{\theta _{N}}+\bar{b}_{2}{\omega_{N-1}}\varepsilon^{\theta_{N}} \int_{1}^{{{1}/\varepsilon}}r^{N-2-(N-2)q}dr,
	\end{aligned}
\end{equation*}
where $\bar{b}_2>0$ and $\omega_{N-1}$
is the area of unit sphere in $\mathbb{R}^{N-1}$.
It follows that
\begin{equation}\label{c2}
	\begin{aligned}
	{\|u_{{\varepsilon}}\|}_{L^{q}_{K}({\mathbb{R}}^{N-1})}^{q}
&\geq \bar{b}_{1}\varepsilon^{\theta_{N}}+
	\bar{b}_{2}{\omega_{N-1}}\varepsilon^{\theta_{N}} \int_{1}^{{{1}/\varepsilon}}r^{-1}dr
	=\bar{ b}_{1}\varepsilon^{\frac{N-1}{2}}+ \bar{b}_{3} \varepsilon^{\frac{N-1}{2}}|\ln \varepsilon|
	\end{aligned}
\end{equation}
if $q=\frac{N-1}{N-2}$;
\begin{equation}\label{c3}
	\begin{aligned}
		{\|u_{{\varepsilon}}\|}_{L^{q}_{K}({\mathbb{R}}^{N-1})}^{q}
		&\geq \bar{b}_{1}\varepsilon^{\theta_{N}}+ \frac{\bar{b}_{2} \omega_{N-1}}{\big(N-1-(N-2)q\big)} \varepsilon^{\theta_{N}}(\varepsilon^{(N-2)q-(N-1)} -1)\\
		&= \bar{b}_{4}\varepsilon^{\frac{(N-2)q}{2}}+o(\varepsilon^{\frac{(N-2)q}{2}})
	\end{aligned}
\end{equation}
if $q<\frac{N-1}{N-2}$
and
\begin{equation}\label{c4}
	\begin{aligned}
		{\|u_{{\varepsilon}}\|}_{L^{q}_{K}({\mathbb{R}}^{N-1})}^{q}
		&\geq \bar{b}_{1}\varepsilon^{\theta_{N}}+ \frac{\bar{b}_{2} \omega_{N-1}}{\big((N-2)q-(N-1)\big)} \varepsilon^{\theta_{N}}(1-\varepsilon^{(N-2)q-(N-1)})\\
		&= \bar{b}_{5}\varepsilon^{\theta_{N}}+o(\varepsilon^{\theta_{N}})
	\end{aligned}
\end{equation}
if $q>\frac{N-1}{N-2}$,
where $\bar{b}_{3}, \bar{b}_{4}, \bar{b}_{5}$ are positive constants independent of $\varepsilon$.
From \eqref{c2}-\eqref{c4}, we conclude that
\begin{equation*}
{\|u_{{\varepsilon}}\|}_{L^{q}_{K}({\mathbb{R}}^{N-1})}^{q}
\geq
\begin{cases}
\bar{b}_{4}\varepsilon^{\frac{(N-2)q}{2}}+o(\varepsilon^{\frac{(N-2)q}{2}}),~&~~q<\frac{N-1}{N-2},\\[1mm]
\bar{b}_{3} \varepsilon^{\frac{N-1}{2}}|\ln \varepsilon| +\bar{b}_{1}\varepsilon^{\frac{N-1}{2}} ,~&~~q=\frac{N-1}{N-2},\\[1mm]
\bar{b}_{5}\varepsilon^{\theta_{N}}+o(\varepsilon^{\theta_{N}}), ~&~~q>\frac{N-1}{N-2},
\end{cases}
\end{equation*}
which implies that \eqref{c0} holds.
\end{proof}

In the following, we are going to prove \eqref{3.0}.

\begin{Lem}\label{lem3.7}
The inequality \eqref{3.0} holds if $N\geq3, \lambda\geq0$, $\mu>0$ and $2\leq q<2_*$.
\end{Lem}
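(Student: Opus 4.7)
The plan is to use the single test function $u_\varepsilon$ from \eqref{b1}, uniformly for all $N\geq 3$, and show that $\sup_{t>0} I^1_{\lambda,\mu}(tu_\varepsilon) < A$ for all sufficiently small $\varepsilon>0$. Since $u_\varepsilon\geq 0$ is nontrivial, this gives \eqref{3.0} at once. Write
\[
f_\varepsilon(t):=I^1_{\lambda,\mu}(tu_\varepsilon)=g_\varepsilon(t)-\frac{\mu}{q}\|u_\varepsilon\|_{L^q_K(\mathbb{R}^{N-1})}^{q}\,t^{q},
\]
where $g_\varepsilon(t):=I^1_{\lambda,0}(tu_\varepsilon)$ denotes the same quantity with the $\mu$-term removed. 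First I would verify that $f_\varepsilon$ has a unique positive maximizer $t^{*}_\varepsilon$, and that $t^{*}_\varepsilon\to 1$ as $\varepsilon\to 0$. This follows because the leading coefficients of $f_\varepsilon$ converge to those in the identity $K_1=K_2+K_3$ of \eqref{2.7}, while the $\mu$-perturbation has vanishing coefficient $\|u_\varepsilon\|_{L^q_K(\mathbb{R}^{N-1})}^{q}\to 0$. Consequently $t^{*}_\varepsilon\geq 1/2$ for all small $\varepsilon$.

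Next, I would estimate $\sup_t g_\varepsilon(t)$ by the same Taylor expansion used in the proof of Lemma \ref{lem3.4}, but now allowing $\lambda\geq 0$ (so the coefficient need not be negative). Combining Lemma \ref{lem2.8} and Lemma \ref{lem3.1} and arguing exactly as in \eqref{zx2}--\eqref{zx3} yields
\[
\sup_{t>0} g_\varepsilon(t)\leq A+C_{\lambda,N}\,R_N(\varepsilon)+o(R_N(\varepsilon)),
\]
where $R_N(\varepsilon)=\varepsilon^{2}$ if $N\geq 5$, $R_N(\varepsilon)=\varepsilon^{2}|\ln\varepsilon|$ if $N=4$, and $R_N(\varepsilon)=\varepsilon$ if $N=3$, with $C_{\lambda,N}$ a (possibly positive) constant.

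For the $\mu$-term, Lemma \ref{lem3.6} supplies a lower bound $\|u_\varepsilon\|_{L^q_K(\mathbb{R}^{N-1})}^{q}\geq c\,S_{N,q}(\varepsilon)$ with $c>0$, where
\[
S_{N,q}(\varepsilon)=
\begin{cases}
\varepsilon^{\theta_N},& 2<q<2_{*},\; N\geq 3,\\
\varepsilon,& q=2,\; N\geq 4,\\
\varepsilon|\ln\varepsilon|,& q=2,\; N=3,
\end{cases}
\]
with $\theta_N=N-1-\tfrac{(N-2)q}{2}$. A direct case check, using $\theta_N\in(0,1)$ for $q\in(2,2_{*})$ and $\theta_N=1$ for $q=2$, shows that $S_{N,q}(\varepsilon)/R_N(\varepsilon)\to\infty$ as $\varepsilon\to 0$ in every case.

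Finally, combining the two bounds with $(t^{*}_\varepsilon)^{q}\geq 2^{-q}$,
\[
\sup_{t>0}f_\varepsilon(t)=g_\varepsilon(t^{*}_\varepsilon)-\frac{\mu}{q}\|u_\varepsilon\|_{L^q_K(\mathbb{R}^{N-1})}^{q}(t^{*}_\varepsilon)^{q}\leq A+C_{\lambda,N}\,R_N(\varepsilon)-\frac{c\mu}{q\,2^{q}}\,S_{N,q}(\varepsilon),
\]
and the right-hand side is strictly less than $A$ for small $\varepsilon$, proving \eqref{3.0}. The delicate situation is the boundary case $N=3$, $q=2$, where $S_{3,2}$ and $R_3$ share the base order $\varepsilon$ and are separated only by the factor $|\ln\varepsilon|$; this is precisely the reason the sharp lower bound $\|u_\varepsilon\|_{L^2_K(\mathbb{R}^{2})}^{2}\geq b_2\varepsilon|\ln\varepsilon|+b_3\varepsilon$ furnished by Lemma \ref{lem3.6} is needed, rather than a crude $\varepsilon$-bound.
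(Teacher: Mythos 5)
Your proposal is correct and follows essentially the same route as the paper: maximize the $\mu$-free part to get $A+B_\varepsilon$ (with $B_\varepsilon=O(\varepsilon^2)$, $O(\varepsilon^2|\ln\varepsilon|)$, $O(\varepsilon)$ for $N\geq5$, $N=4$, $N=3$), bound the maximizer away from zero, and let the boundary term of order $\varepsilon^{\theta_N}$ (resp. $\varepsilon$, $\varepsilon|\ln\varepsilon|$ when $q=2$) from Lemma \ref{lem3.6} dominate, including the delicate $N=3$, $q=2$ case where only the $|\ln\varepsilon|$ factor separates the two orders. This matches the paper's argument with the test function $u_\varepsilon$ for all $N\geq3$, differing only in inessential details such as the explicit constant $1/2$ in place of $a_1$.
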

\begin{proof}
Let
\begin{equation*}
{g}_{{\varepsilon}}(t):=I^1_{\lambda, \mu}(tu_\varepsilon)=\frac{1}{2}\big(K_{1}({\varepsilon})-\lambda K_{4}({\varepsilon})\big)t^{2}
-\frac{K_{2}({\varepsilon})}{2^{*}}t^{2^{*}}-\frac{K_{3}({\varepsilon})}{2_{*}}t^{2_{*}}
-\frac{\mu{K_{5}({\varepsilon})}}{q}t^{q},
\end{equation*}
where $t>0, K_{i}({\varepsilon}), i=1, 2, 3, 4$ are given by \eqref{c7}, \eqref{c8} and  $K_{5}({\varepsilon}):={\|u_{{\varepsilon}}\|}_{L^{q}_{K}({\mathbb{R}}^{N-1})}^{q}.$
Then the inequality \eqref{3.0} holds if we can prove
\begin{equation}\label{zx4}
\sup\limits_{t>0}g_{{\varepsilon}}(t)<A.
\end{equation}
Let $\tilde{t_{\varepsilon}}>0$ be a constant such that ${g}_{{\varepsilon}}(t)$ attains its maximum value. There holds
\begin{equation*}
 K_{1}({\varepsilon})-\lambda
  K_{4}({\varepsilon})-
K_{2}({\varepsilon})\tilde{t_{\varepsilon}}^{{2^{*}-2}}
-K_{3}({\varepsilon})\tilde{t_{\varepsilon}}^{{2_{*}}-2}-\mu K_{5}({\varepsilon})\tilde{t_{\varepsilon}}^{q-2}=0.\notag
\end{equation*}
For any fixed $\lambda\geq0$, in view of Lemma \ref{lem2.8}, Lemma \ref{lem3.1}, Lemma \ref{lem3.6} and $K_{1}=K_{2}+K_{3}$, one has $\tilde{t_{\varepsilon}}\rightarrow 1$ as ${\varepsilon}\rightarrow 0$, which implies that there exists $a_{1}>0$, independent of ${\varepsilon}$, such that $\tilde{t_{\varepsilon}}\geq a_{1}$ for any ${\varepsilon}>0$ small enough. Thus,
\begin{equation}\label{zx5}
  \begin{aligned}
g_{{\varepsilon}}(\tilde{t_{\varepsilon}})
&\leq\sup\limits_{t>0}\Big( \frac{1}{2}\big(K_{1}({\varepsilon})-\lambda K_{4}({\varepsilon})\big)t^{2}-
\frac{K_{2}({\varepsilon})}{2^{*}}t^{2^{*}}-\frac{K_{3}({\varepsilon})}{2_{*}}t^{2_{*}}\Big)-\frac{\mu K_5(\varepsilon)}{q}\tilde{t_{\varepsilon}}^{p}\\
&\leq\sup\limits_{t>0}\Big(\frac{1}{2}\big(K_{1}({\varepsilon})-\lambda K_{4}({\varepsilon})\big)t^{2}-\frac{K_{2}({\varepsilon})}{2^{*}}t^{2^{*}}-\frac{K_{3}({\varepsilon})}{2_{*}}t^{2_{*}}\Big)-\frac{\mu      a_{1}^{q}}{q} K_{5}({\varepsilon}).
  \end{aligned}
\end{equation}
By modifying the proof of Lemma \ref{lem3.4}, we deduce
\begin{equation}\label{zx6}
\sup\limits_{t>0}\Big(\frac{1}{2}\big(K_{1}({\varepsilon})-\lambda K_{4}({\varepsilon})\big)t^{2}-\frac{K_{2}({\varepsilon})}{2^{*}}t^{2^{*}}-\frac{K_{3}({\varepsilon})}{2_{*}}t^{2_{*}}\Big)=
A+B_\varepsilon,
\end{equation}
where
\begin{equation}\label{00}	
B_\varepsilon=
	\begin{cases}
		O({{\varepsilon}}^{2}),~&~N\geq5 ,\\
O({{\varepsilon}}^{2}|{\ln}\varepsilon|),~&~N=4,\\
O({{\varepsilon}}),~&~N=3.
	\end{cases}
\end{equation}
Now, we are ready to verify \eqref{zx4}.
For $N\geq3$ and $2<q<2_*$, it follows from Lemma \ref{lem3.6}, \eqref{zx5} and \eqref{zx6} that for $\varepsilon$ sufficiently small,
\begin{equation}\label{zss1}	
g_{{\varepsilon}}(\tilde{t_{\varepsilon}})\leq
A+B_\varepsilon-\frac{\mu a_{1}^q b_{1}}{q}{{\varepsilon}}^{\theta _{N}}+o({{\varepsilon}}^{\theta _{N}})<A,
\end{equation}
because $\theta_{N}\in (0,1)$ and $\mu>0$. For $q=2$, from Lemma \ref{lem3.6}, \eqref{zx5} and \eqref{zx6}, we proceed as follows:
\vskip 1mm
(1) If $N\geq4$, one has that for any $\mu>0$, $\varepsilon$ sufficiently small,
\begin{equation}\label{zss2}
  \begin{aligned}
g_{{\varepsilon}}(\tilde{t_{\varepsilon}})\leq
A-\frac{\mu a_{1}^q b_{1}}{q}{{\varepsilon}}+o({{\varepsilon}})<A.
  \end{aligned}
\end{equation}

(2) If $N=3$, there holds
\begin{equation}\label{zss3}
g_{{\varepsilon}}(\tilde{t_{\varepsilon}})\leq  
A-\frac{\mu a_{1}^q b_{2}}{q}{\varepsilon}|\ln{\varepsilon}|+O(\varepsilon)<A,
\end{equation}
for small $\varepsilon$ since $\mu>0$.
Therefore, we conclude that \eqref{zx4} holds for $\varepsilon>0$ sufficiently small from \eqref{zss1}-\eqref{zss3}.
\end{proof}

Now we are ready to prove our Theorem \ref{Th1.1}.

{\bf Proof of Theorem \ref{Th1.1}.}
It follows from Lemma \ref{lem2.5}, Lemma \ref{lem2.6}, Lemma \ref{lem3.4}, Lemma \ref{lem3.5}, Lemma \ref{lem3.7} and Mountain Pass Lemma that there exists a nonnegative weak solution $u$ of \eqref{1.5} under the assumptions of Theorem \ref{Th1.1}. Moreover, using the Brezis-Kato Theorem, we deduce that $u\in C^2(\overline{{{\mathbb{R}}^{N}_{+}}})$, and then $u$ is a positive solution of \eqref{1.5} from maximum principle. That is, $u$ is a positive solution of \eqref{1.1}. The proof of Theorem \ref{Th1.1} is completed.
\qed

\vs{2mm}
\section{The proof of Theorem \ref{Th1.2}}\label{S4}
\vs{2mm}
In this section, we prove the existence results stated in Theorem \ref{Th1.2} 
by checking  the condition \eqref{2.12}. In what follows, we always assume that  $N\geq3$, $\lambda\geq0$, $a=0$,
$\mu\geq0, \lambda+\mu>0$ and $2\leq q<2_*$.

Let
\begin{equation*}\label{030}
\hat{c}_{\lambda, \mu}:=\inf\limits_{u\in X}\big\{\sup\limits_{s>0}I^0_{\lambda, \mu}(su):u\geq0 ~\text{and}~ u\not \equiv 0\big\}.
\end{equation*}
It is clear  that $c^0_{\lambda, \mu}\leq \hat{c}_{\lambda, \mu}$ and the condition \eqref{2.12} in Lemma \ref{lem2.7} holds if
\begin{equation}\label{4.0}
\hat{c}_{\lambda, \mu}<\frac{1}{2(N-1)}S_0^{N-1},
\end{equation}
where $S_0$ is given by \eqref{2.8}.

\subsection{Verification of the condition (5.1) when $\lambda>0$ and $\mu=0$ }\label{S3.1}

In this subsection, we are going to verify condition \eqref{4.0} under the assumption $(1)$ in Theorem \ref{Th1.2}. We start this subsection by defining the energy function associated to equation \eqref{1.5} with $ a=0$ and $\mu=0$, namely
\begin{equation}
	I^0_{\lambda, 0}(u):={\frac{1}{2}}{\|{u}\|}^{2}-{\frac{\lambda }{2}}\|u_+\|_{L^{{2}}_{K}({\mathbb{R}}^{N}_{+})}^{2}
-{\frac{1 }{2_{*}}}\|{u_{+}}\|_{L^{{2}_{*}}_{K}({\mathbb{R}}^{N-1})}^{2_{*}}.\notag
\end{equation}
In the sequel, we perform the finer estimates than Lemma \ref{lem2.9}  on the asymptotic behavior
 of $\hat{u}_{{\varepsilon}}$ as $\varepsilon\rightarrow0$, where $\hat{u}_{{\varepsilon}}$ is given by \eqref{b2}. Particularly, the estimates  on the asymptotic behavior
 of $\hat{u}_{{\varepsilon}}$ as $\varepsilon\rightarrow0$ for $3\le N\le 6$ are obtained.

 Firstly, we present the
 asymptotic estimate of the $X$-norm of $\hat{u}_{{\varepsilon}}$ as $\varepsilon \rightarrow0$.
\begin{Lem}\label{lem4.1}
There holds
\begin{equation*}\label{3.09}
{\|\hat{u}_{{\varepsilon}}\|}^{2}=
\begin{cases}
A_N+\hat{\alpha}_{N}{{\varepsilon}}^{2}+o({{\varepsilon}}^{2}),~&~~N\geq 5,\\[1mm]
A_4+\frac{\omega_4}{2}{{\varepsilon}}^{2}|\ln{\varepsilon}|+O({\varepsilon}^{2}),~&~~N=4,\\[1mm]
A_3+O({\varepsilon}),~&~~N=3,\\[1mm]
\end{cases}
\end{equation*}
where $\varepsilon>0$ is sufficiently small, $A_N, \hat{\alpha}_{N}$ are given by \eqref{aa1}, \eqref{a5} and $\omega_{4}$ is the area of unit sphere in ${\mathbb{R}^{4}}$.
\end{Lem}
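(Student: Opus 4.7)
The plan is to follow the method used for Lemma 2.9 but carry the computation through carefully enough that it covers all dimensions $N\geq 3$. Since $K(x)=e^{|x|^2/4}$ satisfies $K^{-1}\nabla K = x/2$, a direct computation gives $K^{1/2}\nabla\hat{u}_\varepsilon = \nabla(\phi\hat{U}_\varepsilon) - \tfrac{x}{4}\phi\hat{U}_\varepsilon$. Expanding the square and applying the divergence theorem to the cross term $-\tfrac{1}{4}\int x\cdot\nabla((\phi\hat{U}_\varepsilon)^2)\,dx$, where the boundary contribution vanishes because $x\cdot n = -x_N = 0$ on $\partial\mathbb{R}^N_+$, leads to
\begin{equation*}
\|\hat{u}_\varepsilon\|^2 \;=\; \int_{\mathbb{R}^N_+}|\nabla(\phi\hat{U}_\varepsilon)|^2\,dx \;+\; \frac{N}{4}\int_{\mathbb{R}^N_+}(\phi\hat{U}_\varepsilon)^2\,dx \;+\; \frac{1}{16}\int_{\mathbb{R}^N_+}|x|^2(\phi\hat{U}_\varepsilon)^2\,dx.
\end{equation*}
It therefore suffices to analyze these three integrals separately.

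For the Dirichlet term, a standard cutoff argument reduces $\int|\nabla(\phi\hat{U}_\varepsilon)|^2$ to $A_N$ modulo the tail $\int_{\mathbb{R}^N_+\setminus B_1^+}|\nabla\hat{U}_\varepsilon|^2$ and the annular contributions involving $\nabla\phi$, both of which are $O(\varepsilon^{N-2})$ by direct rescaling. For the two $L^2$-type integrals, I would substitute $y=x/\varepsilon$: the first integrand becomes $(|y'|^2+(y_N+1)^2)^{-(N-2)}$ integrated on $B_{1/\varepsilon}^+$ against $\varepsilon^2$, and the second becomes $|y|^2(|y'|^2+(y_N+1)^2)^{-(N-2)}$ on $B_{1/\varepsilon}^+$ against $\varepsilon^4$. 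Their asymptotics are dictated by the behavior at infinity: the first integrand decays as $|y|^{-2(N-2)}$, so the integral converges for $N\geq 5$ (yielding $\hat{d}_N\varepsilon^2$), diverges logarithmically like $\tfrac{\omega_4}{2}|\ln\varepsilon|$ for $N=4$, and grows like $\varepsilon^{-1}$ for $N=3$; similarly, the weighted integrand decays as $|y|^{6-2N}$, contributing $O(\varepsilon^4)$ for $N\geq 7$, $O(\varepsilon^4|\ln\varepsilon|)$ for $N=6$, $O(\varepsilon^3)$ for $N=5$, $O(\varepsilon^2)$ for $N=4$, and $O(\varepsilon)$ for $N=3$.

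Assembling the three pieces in each dimension yields the stated asymptotics. For $N\geq 5$, the dominant correction is $\tfrac{N}{4}\hat{d}_N\varepsilon^2$; a short Beta-function manipulation shows this equals the $\hat{\alpha}_N$ given in Lemma 2.9, and every other contribution is $o(\varepsilon^2)$. For $N=4$, the logarithmic divergence of the weighted $L^2$ integral dominates and yields $\tfrac{\omega_4}{2}\varepsilon^2|\ln\varepsilon|$, with the remaining errors absorbed in $O(\varepsilon^2)$. For $N=3$, all three error contributions are of the same order $\varepsilon$, giving $A_3+O(\varepsilon)$. The main technical obstacle is isolating the correct leading behavior in the low-dimensional regimes $N=3,4$, where the rescaled integrands are not integrable at infinity; in particular, extracting the $\tfrac{\omega_4}{2}$ coefficient in dimension four requires polar-coordinate analysis on the annulus $B_{1/\varepsilon}^+\setminus B_1^+$ after replacing $|y'|^2+(y_N+1)^2$ by its leading large-$|y|$ behavior $|y|^2$ and controlling the resulting remainder by $O(1)$.
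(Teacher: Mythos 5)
Your proposal is correct, and it handles the key cross term differently from the paper. The paper expands $K|\nabla\hat u_\varepsilon|^2=|\nabla(\phi\hat U_\varepsilon)|^2-\tfrac12(\phi\hat U_\varepsilon)\,x\cdot\nabla(\phi\hat U_\varepsilon)+\tfrac{|x|^2}{16}(\phi\hat U_\varepsilon)^2$ and then estimates $-\tfrac12\int\phi^2\hat U_\varepsilon\,(x\cdot\nabla\hat U_\varepsilon)\,dx$ directly by rescaling, with separate annulus computations for $N=3,4$ and an appeal to Lemma 5.3 of \cite{2021} to identify the resulting constant $\tfrac{N-2}{2}(D_{1,N}+D_{2,N})$ with $\hat\alpha_N$. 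You instead integrate the cross term by parts exactly; since $x\cdot n=-x_N=0$ on $\partial\mathbb{R}^N_+$ and $\phi\hat U_\varepsilon$ has compact support, the boundary contribution indeed vanishes and you obtain the exact identity $\|\hat u_\varepsilon\|^2=\int|\nabla(\phi\hat U_\varepsilon)|^2+\tfrac N4\int(\phi\hat U_\varepsilon)^2+\tfrac1{16}\int|x|^2(\phi\hat U_\varepsilon)^2$, which reduces everything to the mass asymptotics already needed for Lemma \ref{lem4.3} (plus the same $|x|^2$-weighted tail estimates as in \eqref{cz3}). The price is that you must check $\tfrac N4\hat d_N=\hat\alpha_N$; this does hold: writing both sides via Gamma functions and using $\Gamma\big(\tfrac{N+1}{2}\big)=\tfrac{N-1}{2}\Gamma\big(\tfrac{N-1}{2}\big)$, $\Gamma\big(\tfrac{N-1}{2}\big)=\tfrac{N-3}{2}\Gamma\big(\tfrac{N-3}{2}\big)$, $\Gamma(N-1)=(N-2)\Gamma(N-2)$, both equal $\tfrac{N}{8}\Gamma\big(\tfrac{N-1}{2}\big)\Gamma\big(\tfrac{N-3}{2}\big)/\Gamma(N-2)$ (this identity is also what consistency of the two decompositions forces, since for $N\ge5$ the non-cutoff integration by parts gives $(N-2)(D_{1,N}+D_{2,N})=\tfrac N2 D_{4,N}$). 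Your dimension-by-dimension bookkeeping — $O(\varepsilon^{N-2})$ cutoff errors, $\tfrac{\omega_4}{2}\varepsilon^2|\ln\varepsilon|$ from the half-space log divergence when $N=4$, the $|x|^2$-weighted term of size $O(\varepsilon^4)$, $O(\varepsilon^4|\ln\varepsilon|)$, $O(\varepsilon^3)$, $O(\varepsilon^2)$, $O(\varepsilon)$ for $N\ge7,6,5,4,3$ — matches the paper's, so the assembled asymptotics agree in all dimensions. Net effect: your route is slightly cleaner (it eliminates the separate $x\cdot\nabla\hat U_\varepsilon$ computation and the citation of the external constant), at the cost of one explicit Beta-function verification, which you should spell out rather than only assert.
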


\begin{proof}
A straightforward computation yields that
\begin{equation*}
  \begin{aligned}
{\|\hat{u}_{{\varepsilon}}    \|}^{2}
&=\int_{\mathbb{R}^{N}_{+}}\Big(|\nabla{\phi}|^{2}\hat{U}_\varepsilon^{2}+\notag
2{\phi}{{{\hat{U}}_{\varepsilon}}}({\nabla{\phi}}\cdot{\nabla{{{{\hat{U}}_{\varepsilon}}}}})\notag
-\frac{1}{2}{\phi}\hat{U}_\varepsilon^{2}(x\cdot{\nabla{\phi}})\Big)dx\\ &\quad+\int_{\mathbb{R}^{N}_{+}}{\phi}^{2}|\nabla{{{\hat{U}}_{\varepsilon}}}|^{2}dx-\notag
\frac{1}{2}\int_{\mathbb{R}^{N}_{+}}{\phi}^{2}{{\hat{U}}_{\varepsilon}}(x\cdot{\nabla{{{\hat{U}}_{\varepsilon}}}})dx+\frac{1}{16}\int_{\mathbb{R}^{N}_{+}}{\phi}^{2}|x|^{2}\hat{U}_\varepsilon^{2}dx.\notag
  \end{aligned}
\end{equation*}
Note that
\begin{equation*}
\int_{\mathbb{R}^{N}_{+}}{|\nabla{\phi}|^{2}\hat{U}_\varepsilon^{2}}dx={{\varepsilon}^{{N-2}}}\int_{ B_{2}^{+}\setminus{B_{1}^{+}}}{\frac{|\nabla{\phi}|^{2}}{(|x{'}|^{2}+|x_{N}+{\varepsilon}|^{2})^{N-2}}}dx=O({\varepsilon}^{N-2}) \notag
\end{equation*}
as  $\varepsilon \rightarrow 0$. Similarly, one has
\begin{equation*}
\int_{\mathbb{R}^{N}_{+}}\Big(|\nabla{\phi}|^{2}\hat{U}_\varepsilon^{2}+\notag
2{\phi}{{{\hat{U}}_{\varepsilon}}}({\nabla{\phi}}\cdot{\nabla{{{{\hat{U}}_{\varepsilon}}}}})\notag
-\frac{1}{2}{\phi}\hat{U}_\varepsilon^{2}(x\cdot{\nabla{\phi}})\Big)dx=O({\varepsilon}^{N-2}),
\end{equation*}
and thus
\begin{equation}\label{3.05}
{\|\hat{u}_{\varepsilon}\|}^{2}
= \int_{\mathbb{R}^{N}_{+}}{\phi}^{2}|\nabla{\hat{U}_{\varepsilon}}|^{2}dx-
\frac{1}{2}\int_{\mathbb{R}^{N}_{+}}\phi^{2}{{\hat{U}}_{\varepsilon}}(x\cdot{\nabla{{{\hat{U}}_{\varepsilon}}}})dx+\frac{1}{16}\int_{\mathbb{R}^{N}_{+}}{\phi}^{2}|x|^{2}\hat{U}_\varepsilon^{2}dx+O(\varepsilon^{N-2}).
\end{equation}

Now, we estimate each of the integrals on the right-hand side of (\ref{3.05}). To this end, we first calculate
\begin{equation*}
{\nabla{{{\hat{U}}_{\varepsilon}}}}=-\frac{{(N-2){\varepsilon}^{\frac {N-2}{2}}}}{{(|x{'}|^{2}+|x_{N}+{\varepsilon}|^{2})^{\frac{N}{2}}}}(x', x_{N}+{\varepsilon}).\notag
\end{equation*}
From the definition of $A_N$, we have
\begin{equation}\label{3.06}
	\begin{aligned}
		\int_{\mathbb{R}^{N}_{+}}{\phi}^{2}|\nabla{{{\hat{U}}_{\varepsilon}}}|^{2}dx
		&=
		A_N+{(N-2)^{2}}{\varepsilon}^{N-2}\int_{ {\mathbb{R}^{N}_{+}}\setminus{B_{1}^{+}}} \frac{({\phi}^{2}-1)}
		{{{(|x{'}|^{2}+|x_{N}+{\varepsilon}|^{2})^{N-1}}}}dx\\
		&=A_N+O({\varepsilon}^{N-2}).
	\end{aligned}
\end{equation}

Next, we compute the second integral on the right-hand side of (\ref {3.05}).
Arguing as \eqref{3.06}, 
one has that for $N\geq 5$,
\begin{equation}\label{3.07}
 \begin{aligned}
& \ \ \ \ \int_{\mathbb{R}^{N}_{+}}{\phi}^{2}{{\hat{U}}_{\varepsilon}}(x\cdot{\nabla{{{\hat{U}}_{\varepsilon}}}})dx\\
&=\int_{\mathbb{R}^{N}_{+}}{{\hat{U}}_{\varepsilon}}(x\cdot{\nabla{{{\hat{U}}_{\varepsilon}}}})dx+O({\varepsilon}^{N-2})\\
&=-(N-2){\varepsilon}^{N-2}\int_{\mathbb{R}^{N}_{+}}\frac{|x{'}|^{2}+x_{N}(x_{N}+{\varepsilon})}{{{{(|x{'}|^{2} +|x_{N}+{\varepsilon}|^{2})^{N-1}}}}}dx+O({\varepsilon}^{N-2})\\
&=-(N-2){\varepsilon}^{2}\int_{\mathbb{R}^{N}_{+}}\frac{|y{'}|^{2}+y_{N}(y_{N}+1)}{{{{(|y{'}|^{2}+|y_{N}+1|^{2})^{N-1}}}}}dy+O({\varepsilon}^{N-2})\\
&=-(N-2)(D_{1, N}+D_{2, N}){\varepsilon}^{2}+O({\varepsilon}^{N-2}),
 \end{aligned}
\end{equation}
 where
\begin{equation*}\label{adc} D_{1,N}=\int_{\mathbb{R}^{N}_{+}}\frac{|y{'}|^{2}}{{{{(|y{'}|^{2}+|y_{N}+1|^{2})^{N-1}}}}}dy,\quad
D_{2,N}=\int_{\mathbb{R}^{N}_{+}}\frac{y_{N}(y_{N}+1)}{{{{(|y{'}|^{2}+|y_{N}+1|^{2})^{N-1}}}}}dy.\quad
\end{equation*}
For $ N=3,4$, there holds
\begin{equation}\label{z1}
\begin{aligned}
\int_{\mathbb{R}^{N}_{+}}{\phi}^{2}{{\hat{U}}_{\varepsilon}}(x\cdot{\nabla{{{\hat{U}}_{\varepsilon}}}})dx
&=\int_{B_{2}^{+}}{{\hat{U}}_{\varepsilon}}(x\cdot{\nabla{{{\hat{U}}_{\varepsilon}}}})dx
+\int_{B_{2}^{+}\setminus{B_{1}^{+}}}({\phi}^{2}-1){{\hat{U}}_{\varepsilon}}(x\cdot{\nabla{{{\hat{U}}_{\varepsilon}}}})dx\\
&=-(N-2){\varepsilon}^{2}\int_{B_{2/\varepsilon}^{+}}\frac{|y{'}|^{2}+y_{N}(y_{N}+1)}
{{(|y{'}|^{2}+|y_{N}+1|^{2})^{N-1}}}dy+O({\varepsilon}^{N-2})\\
&=-(N-2){\varepsilon}^{2}\int_{B_{2/\varepsilon}^{+}\setminus{B_{1}^{+}}}\frac{|y{'}|^{2}+y_{N}(y_{N}+1)}
{{{{(|y{'}|^{2}+|y_{N}+1|^{2})^{N-1}}}}}dy+O(\varepsilon^2)\\
&\quad+O({\varepsilon}^{N-2}).
\end{aligned}
\end{equation}
Since for $N=4$,
\begin{equation*}
\begin{aligned}
\int_{B_{2/\varepsilon}^{+}\setminus B_{1}^{+}}\frac{|y{'}|^{2}+y_{4}(y_{4}+1)}
{{{{(|y{'}|^{2}+|y_{4}+1|^{2})^{3}}}}}dy
&=\int_{B_{2/\varepsilon}^{+}\setminus{B_{1}^{+}}}\frac{|y|^{2}}
{{{{(|y{'}|^{2}+|y_{4}+1|^{2})^{3}}}}}dy\\
&\quad+
\int_{B_{2/\varepsilon}^{+}\setminus{B_{1}^{+}}}\frac{y_{4}}
{{{{(|y'|^{2}+|y_{4}+1)^{3}}}}}dy\\
&=\int_{B_{2/\varepsilon}^{+}\setminus{B_{1}^{+}}}\frac{|y|^{2}}
{{{{(|y{'}|^{2}+|y_{4}+1|^{2})^{3}}}}}dy+O_\varepsilon(1)
\end{aligned}
\end{equation*}
and
\begin{equation*}
\begin{aligned}
0&<\int_{B_{2/\varepsilon}^{+}\setminus{B_{1}^{+}}}\frac{1}{|y|^{4}}dy-\int_{B_{2/\varepsilon}^{+}\setminus{B_{1}^{+}}}\frac{|y|^{2}}
{{{{(|y{'}|^{2}+|y_{4}+1|^{2})^{3}}}}}dy\\
&=\int_{B_{2/\varepsilon}^{+}\setminus{B_{1}^{+}}}\frac{{(|y{'}|^{2}+|y_{4}+1|^{2})^{3}}-|y|^{6}}{|y|^{4}{(|y{'}|^{2}+|y_{4}+1|^{2})^{3}}}dy=O_\varepsilon(1),
\end{aligned}
\end{equation*}
 we obtain that
\begin{equation}\label{zz2}
\begin{aligned}
\ \ \ \ \int_{B_{2/\varepsilon}^{+}\setminus B_{1}^{+}}\frac{|y{'}|^{2}+y_{4}(y_{4}+1)}
{{{{(|y{'}|^{2}+|y_{4}+1|^{2})^{3}}}}}dy
&=\int_{B_{2/\varepsilon}^{+}\setminus{B_{1}^{+}}}\frac{1}{|y|^{4}}dy+O_\varepsilon(1)\\
&=\frac{\omega_4}{2}\int_{1}^{{{2}/\varepsilon}}r^{-1}dr+O_\varepsilon(1)\\
&=\frac{\omega_4}{2}(|\ln \varepsilon|+\ln 2)+O_\varepsilon(1),
\end{aligned}
\end{equation}
 where $\omega_{4}$ is the area of unit sphere in ${\mathbb{R}^{4}}$ and $O_\varepsilon(1)$ is a constant associated with $\varepsilon$.
For $N=3$, we get that
\begin{equation}\label{zz3}
\begin{aligned}
\int_{B_{2/\varepsilon}^{+}\setminus{B_{1}^{+}}}\frac{|y{'}|^{2}+y_{3}(y_{3}+1)}
{{{{(|y{'}|^{2}+|y_{3}+1|^{2})^{2}}}}}dy
&=O\Big(\int_{B_{2/\varepsilon}^{+}\setminus{B_{1}^{+}}}\frac{1}{|y|^{2}}dy\Big)
=O(\varepsilon^{-1}).
\end{aligned}
\end{equation}
It follows from \eqref{3.07}--\eqref{zz3} that
\begin{equation}\label{cz1}
\int_{\mathbb{R}^{N}_{+}}{\phi}^{2}{{\hat{U}}_{\varepsilon}}(x\cdot{\nabla{{{\hat{U}}_{\varepsilon}}}})dx
=
\begin{cases}
-(N-2)(D_{1, N}+D_{2, N}){\varepsilon}^{2}+O({\varepsilon}^{N-2}),~&~N\geq5,\\[1mm]
{-\omega_4}{{\varepsilon}}^{2}|{\ln}\varepsilon|+O({{\varepsilon}}^{2}),~&~N=4,\\[1mm]
O({{\varepsilon}}),~&~N=3.
\end{cases}
\end{equation}

By the same trick computation, we calculate the last integral on the right-hand side of (\ref {3.05}) as follows.
\begin{align*}
\int_{\mathbb{R}^{N}_{+}}{\phi}^{2}|x|^{2}\hat{U}_\varepsilon^{2}dx
&=\int_{B_{2}^{+}}|x|^{2}\hat{U}_\varepsilon^{2}dx+\int_{ B_{2}^{+}\setminus{B_{1}^{+}}}({\phi}^{2}-1)|x|^{2}\hat{U}_\varepsilon^{2}dx\\
&=\int_{B_{2}^{+}}|x|^{2}\hat{U}_\varepsilon^{2}dx+O({\varepsilon}^{N-2})\\
&={\varepsilon}^{4}\int_{{{B^{+}_{{2}/{\varepsilon}}}}}
\frac{|y|^{2}}{{({|y{'}|^{2}+|y_{N}+1|^{2})^{N-2}}}}dy+O({\varepsilon}^{N-2})\notag\\
&={\varepsilon}^{4}\int_{B^{+}_{{2}/{\varepsilon}}\setminus{B_{1}^{+}}}
\frac{|y|^{2}}{{({|y{'}|^{2}+|y_{N}+1|^{2})^{N-2}}}}dy+O({\varepsilon}^{4})+O({\varepsilon}^{N-2}).\notag
\end{align*}
Due to
\begin{equation*}
  \begin{aligned}
\int_{{B^{+}_{{2}/{\varepsilon}}}\setminus{B_{1}^{+}}}
\frac{|y|^{2}}{{({|y{'}|^{2}+|y_{N}+1|^{2})^{N-2}}}}dy
&=O\Big(\int_{{B^{+}_{{2}/{\varepsilon}}}\setminus B_{1}^{+}}\frac{1}{|y|^{2N-6}}dy\Big)\notag\\
&=O\Big(\int_{1}^{{2}/{\varepsilon}}r^{5-N}dr\Big),\notag
  \end{aligned}
\end{equation*}
there holds
\begin{equation}\label{cz3}
\int_{\mathbb{R}^{N}_{+}}{\phi}^{2}|x|^{2}\hat{U}_\varepsilon^{2}dx=
\begin{cases}
O({\varepsilon}^{4}),~&~~N\geq 7,\\[1mm]
O({\varepsilon}^{4}|\ln{\varepsilon}|),~&~~N=6,\\[1mm]
O({\varepsilon}^{N-2}),~&~~3\leq N\leq5.\\[1mm]
\end{cases}
\end{equation}

We derive
\begin{equation*}\label{13.09}
{\|\hat{u}_{{\varepsilon}}\|}^{2}=
\begin{cases}
A_N+\frac{N-2}{2}(D_{1, N}+D_{2, N}){\varepsilon}^{2}+o({{\varepsilon}}^{2}),~&~~N\geq 5,\\[1mm]
A_4+\frac{\omega_4}{2}{{\varepsilon}}^{2}|\ln{\varepsilon}|+O({\varepsilon}^{2}),~&~~N=4,\\[1mm]
A_3+O({\varepsilon}),~&~~N=3,\\[1mm]
\end{cases}
\end{equation*}
from \eqref{3.05}, \eqref{3.06}, \eqref{cz1}, \eqref{cz3}. Our lemma follows from the fact (see Lemma 5.3 in \cite{2021}) that
\begin{equation*}
D_{1, N}+D_{2, N}=\frac{\omega_{N-1}}{2(N-4)}\Bigg(B\Big(\frac{N+1}{2}, \frac{N-3}{2}\Big)+\frac{1}{N-3}B\Big(\frac{N-1}{2}, \frac{N-1}{2}\Big)\Bigg).
\end{equation*}
\end{proof}

Next, we state the ${L^{{2}_{*}}_{K}({\mathbb{R}}^{N-1})}$-norm of $\hat{u}_{{\varepsilon}}$ as $\varepsilon\rightarrow0$
\begin{Lem}\label{lem4.2}
As  $\varepsilon\rightarrow 0$, we have
\begin{equation*}
{\|\hat{u}_{{\varepsilon}}\|}_{L^{{2}_{*}}_{K}({\mathbb{R}}^{N-1})}^{2_{*}}
=
\begin{cases}
B_N^{{2_*}/{2}}-\hat{\gamma}_{N}{{\varepsilon}}^{2}+o({\varepsilon}^{2}),~&~~N\geq4,\\[1mm]
B_3^2+O({\varepsilon}^2|\ln\varepsilon|),~&~~N=3, \\[1mm]
\end{cases}
\end{equation*}
 where $B_N, \hat{\gamma}_{N}$ are given in \eqref{aa1}, \eqref{a7}.
\end{Lem}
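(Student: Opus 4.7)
The plan is to write $\|\hat{u}_\varepsilon\|^{2_*}_{L^{2_*}_K(\mathbb{R}^{N-1})} = \int_{\mathbb{R}^{N-1}} K(x',0)\hat{u}_\varepsilon(x',0)^{2_*}\,dx'$ using $\hat{u}_\varepsilon=K^{-1/2}\phi \hat{U}_\varepsilon$ and the identity $1-2_*/2 = -1/(N-2)$ to absorb the Kelvin weight into the exponential factor $e^{-|x'|^2/(4(N-2))}$. This produces
\begin{equation*}
\|\hat{u}_\varepsilon\|_{L^{2_*}_K(\mathbb{R}^{N-1})}^{2_*} = \int_{\mathbb{R}^{N-1}} e^{-|x'|^2/(4(N-2))}\phi(x',0)^{2_*}\,\frac{\varepsilon^{N-1}}{(|x'|^2+\varepsilon^2)^{N-1}}\,dx'.
\end{equation*}
The contribution from $\{|x'|>1\}$ (where $\phi<1$) is pointwise bounded by $C\varepsilon^{N-1}|x'|^{-2(N-1)}$ and hence is $O(\varepsilon^{N-1})$. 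Rescaling $y'=x'/\varepsilon$ on $\{|x'|\le 1\}$ reduces the main part to
\begin{equation*}
J_\varepsilon := \int_{|y'|\le 1/\varepsilon}\frac{e^{-\varepsilon^2|y'|^2/(4(N-2))}}{(|y'|^2+1)^{N-1}}\,dy'.
\end{equation*}

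Next, since the exponent $t=\varepsilon^2|y'|^2/(4(N-2))$ is uniformly bounded by $1/(4(N-2))$ on $|y'|\le 1/\varepsilon$, Taylor's theorem gives $e^{-t}=1-t+O(t^2)$. Setting $J_\varepsilon^{(k)} := \int_{|y'|\le 1/\varepsilon}|y'|^{2k}(|y'|^2+1)^{-(N-1)}\,dy'$, I would expand
\begin{equation*}
J_\varepsilon = J_\varepsilon^{(0)} - \frac{\varepsilon^2}{4(N-2)}\,J_\varepsilon^{(1)} + R_\varepsilon, \qquad R_\varepsilon = O(\varepsilon^4)\,J_\varepsilon^{(2)}.
\end{equation*}
The leading piece $J_\varepsilon^{(0)}$ equals $\int_{\mathbb{R}^{N-1}}(|y'|^2+1)^{-(N-1)}\,dy' + O(\varepsilon^{N-1}) = B_N^{2_*/2} + O(\varepsilon^{N-1})$, where the identification of the full-space integral with $B_N^{2_*/2}$ comes from the definition of $\hat{U}_\varepsilon$ in \eqref{2.9}.

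For $N\ge 4$, the integral $J_\varepsilon^{(1)}$ converges to the finite value $\frac{\omega_{N-1}}{2}B(\tfrac{N+1}{2},\tfrac{N-3}{2}) = 4(N-2)\hat{\gamma}_N$ (use polar coordinates, the substitution $s=r^2$, and the Beta function as in \eqref{a7}), while the tail $\int_{|y'|>1/\varepsilon}$ is $o(1)$; this subleading term thus contributes $-\hat{\gamma}_N\varepsilon^2 + o(\varepsilon^2)$. An elementary radial computation of $J_\varepsilon^{(2)}$ yields $R_\varepsilon=O(\varepsilon^4)$, $O(\varepsilon^4|\ln\varepsilon|)$, $O(\varepsilon^3)$ for $N\ge 6$, $N=5$, $N=4$ respectively, all of which are $o(\varepsilon^2)$, and the conclusion follows. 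For $N=3$, by contrast, $J_\varepsilon^{(1)}=\omega_2\int_0^{1/\varepsilon}\frac{r^3}{(r^2+1)^2}\,dr = \omega_2|\ln\varepsilon| + O(1)$ diverges logarithmically, so this term contributes $O(\varepsilon^2|\ln\varepsilon|)$, and a parallel computation gives $J_\varepsilon^{(2)}=O(\varepsilon^{-2})$, hence $R_\varepsilon=O(\varepsilon^2)$, which is absorbed in the logarithmic error.

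The main obstacle is not conceptual but bookkeeping: uniformly controlling the Taylor remainder $R_\varepsilon$ across dimensions and verifying that for $N\ge 4$ it remains $o(\varepsilon^2)$ despite the growth of $J_\varepsilon^{(2)}$ in low dimensions. Beyond that, the argument is a direct adaptation of the method used to prove Lemma 5.4 of \cite{2021} (done there only for $N\ge 7$) and runs in close parallel to the proof of Lemma \ref{lem4.1}, so the remaining estimates are routine.
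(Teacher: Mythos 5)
Your proposal is correct and follows essentially the same route as the paper: absorb $K(x',0)^{1-2_*/2}=e^{-|x'|^2/(4(N-2))}$, discard the cut-off region as $O(\varepsilon^{N-1})$, rescale and Taylor-expand the exponential, identify the $\varepsilon^2$-coefficient with $\hat\gamma_N$ through the Beta-function integral, and isolate the logarithmically divergent $N=3$ case. The only cosmetic difference is that the paper quotes Lemma~5.4 of \cite{2021} for the value of $\int_{\mathbb{R}^{N-1}}|y'|^2(1+|y'|^2)^{1-N}dy'$ while you compute it directly, and it organizes the remainder estimates via the splitting $\hat B_{1/\varepsilon}$ versus its complement rather than via your $J^{(k)}_\varepsilon$ bookkeeping; the estimates themselves coincide.
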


\begin{proof}
For $N\geq4$, there holds
\begin{equation}\label{xx1}
  \begin{aligned}
{\|\hat{u}_{{\varepsilon}}\|}_{{L}_{K}^{2_{*}}({\mathbb{R}}^{N-1})}^{2_{*}}
&=\int_{\mathbb{R}^{N-1}}K(x{'},0)^{\frac{1}{2-N}}\hat{U}^{2_{*}}_{\varepsilon}dx{'}+O({\varepsilon}^{N-1})\\
&=B_N^{{2_*}/{2}}+\int_{\mathbb{R}^{N-1}}\frac{e^{-\frac{{\varepsilon}^{2}|y'|^{2}}{4(N-2)}}-1}
{{{{(|y{'}|^{2}+1)^{N-1}}}}}dy{'}+O({\varepsilon}^{N-1}).
  \end{aligned}
\end{equation}
Notice that
\begin{equation}\label{xx2}
  \begin{aligned}
& \ \ \ \ \int_{\mathbb{R}^{N-1}}\frac{ e^{-\frac{{\varepsilon}^{2}|y{'}|^{2}}{4(N-2)}}-1 }
{{{{(|y{'}|^{2}+1)^{N-1}}}}}dy{'}
-\int_{\mathbb{R}^{N-1}}\frac{-\frac{\varepsilon^{2}|y{'}|^{2}}{4(N-2)}}
{{{{(|y{'}|^{2}+1)^{N-1}}}}}dy{'}\\
&=\int_{\hat{B}_{1/\varepsilon}}\frac{ e^{-\frac{{\varepsilon}^{2}|y{'}|^{2}}{4(N-2)}}-1+\frac{\varepsilon^{2}|y{'}|^{2}}{4(N-2)} }
{{{{(|y{'}|^{2}+1)^{N-1}}}}}dy{'}+\int_{\mathbb{R}^{N-1}\setminus \hat{B}_{1/\varepsilon}}\frac{ e^{-\frac{{\varepsilon}^{2}|y{'}|^{2}}{4(N-2)}}-1+\frac{\varepsilon^{2}|y{'}|^{2}}{4(N-2)} }
{{{{(|y{'}|^{2}+1)^{N-1}}}}}dy{'}.
  \end{aligned}
\end{equation}
We conclude from Taylor's formula that
\begin{equation}\label{t1}
e^{-\frac{{\varepsilon}^{2}|y{'}|^{2}}{4(N-2)}}-1=-\frac{{\varepsilon}^{2}|y{'}|^{2}}{4(N-2)}+O({\varepsilon}^{4}|y{'}|^4),\quad\quad
y{'}\in \hat{B}_{1/\varepsilon},
\end{equation}
and then
\begin{equation*}
  \begin{aligned}
\int_{\hat{B}_{1/\varepsilon}}\frac{ e^{-\frac{{\varepsilon}^{2}|y{'}|^{2}}{4(N-2)}}-1+\frac{\varepsilon^{2}|y{'}|^{2}}{4(N-2)} }
{{{{(|y{'}|^{2}+1)^{N-1}}}}}dy{'}
&=O\Big(\varepsilon^4\int_{\hat{B}_{1/\varepsilon}}\frac{ |y{'}|^{4}}
{{{{(|y{'}|^{2}+1)^{N-1}}}}}dy{'}\Big)\\
&=O\Big(\varepsilon^4\int_{\hat{B}_{1/\varepsilon}\setminus \hat{B}_{1}}\frac{ |y{'}|^{4}}
{{{{(|y{'}|^{2}+1)^{N-1}}}}}dy{'}\Big)+O(\varepsilon^4).
  \end{aligned}
  \end{equation*}
Due to
\begin{equation*}
  \begin{aligned}
\int_{\hat{B}_{1/\varepsilon}\setminus \hat{B}_{1}}\frac{ |y{'}|^{4}}
{{{{(|y{'}|^{2}+1)^{N-1}}}}}dy{'}
=O\Big(\int_{\hat{B}_{1/\varepsilon}\setminus \hat{B}_{1}}\frac{ 1}
{|y{'}|^{2N-6}}dy{'}\Big)
=O\Big(\int_{1}^{{{1}/\varepsilon}}r^{-N+4}dr\Big),
  \end{aligned}
\end{equation*}
one has
\begin{equation}\label{xx4}
\int_{\hat{B}_{1/\varepsilon}}\frac{ e^{-\frac{{\varepsilon}^{2}|y{'}|^{2}}{4(N-2)}}-1+\frac{\varepsilon^{2}|y{'}|^{2}}{4(N-2)} }
{{{{(|y{'}|^{2}+1)^{N-1}}}}}dy{'}=o(\varepsilon^2).
\end{equation}
On the other hand,
  \begin{align}
& \ \ \ \ \int_{\mathbb{R}^{N-1}\setminus \hat{B}_{1/\varepsilon}}\frac{  \notag e^{-\frac{{\varepsilon}^{2}|y{'}|^{2}}{4(N-2)}}-1+\frac{\varepsilon^{2}|y{'}|^{2}}{4(N-2)} }
{{{{(|y{'}|^{2}+1)^{N-1}}}}}dy{'}\\ \label{xx5}
&=O\Big(\int_{\mathbb{R}^{N-1}\setminus \hat{B}_{1/\varepsilon}}\frac{ 1}
{|y{'}|^{2N-2}}dy{'}\Big)+
O\Big(\varepsilon^2\int_{\mathbb{R}^{N-1}\setminus \hat{B}_{1/\varepsilon}}\frac{ 1}
{|y{'}|^{2N-4}}dy{'}\Big)\\ \notag
&=O\Big(\int_{{{1}/\varepsilon}}^{\infty}r^{-N}dr\Big)+O\Big(\varepsilon^2\int_{{{1}/\varepsilon}}^{\infty}r^{-N+2}dr\Big)=O(\varepsilon^{N-1}).\notag
  \end{align}
In view of \eqref{xx2}, \eqref{xx4} and \eqref{xx5}, we get that
\begin{equation}\label{xx7} \int_{\mathbb{R}^{N-1}}\frac{e^{-\frac{{\varepsilon}^{2}|y'|^{2}}{4(N-2)}}-1}
{{{{(|y{'}|^{2}+1)^{N-1}}}}}dy{'}
=-\frac{\varepsilon^{2}}{4(N-2)}\int_{\mathbb{R}^{N-1}}\frac{|y{'}|^{2}}
{{{{(|y{'}|^{2}+1)^{N-1}}}}}dy{'}+o({\varepsilon}^{2})
\end{equation}
for $N\geq4$. It follows from \eqref{xx1} and \eqref{xx7} that for $N\geq4$,
\begin{equation}\label{xx8}
{\|\hat{u}_{{\varepsilon}}\|}_{{L}_{K}^{2_{*}}({\mathbb{R}}^{N-1})}^{2_{*}}
=B_{N}^{2_*/2}-\frac{D_{3,N}}{4(N-2)} {\varepsilon}^{2}+o({\varepsilon}^{2})
\end{equation}
with
\begin{equation*}
D_{3,N}=\int_{\mathbb{R}^{N-1}}\frac{|y{'}|^{2}}
{{{{(|y{'}|^{2}+1)^{N-1}}}}}dy{'}.
\end{equation*}

For $N=3$, one has
  \begin{align}
{\|\hat{u}_{{\varepsilon}}\|}_{{L}_{K}^{4}({\mathbb{R}}^{2})}^{4}\notag
&=\int_{\mathbb{R}^{2}}K(x{'},0)^{{-1}}{\phi}^{4}\hat{U}^{4}_{\varepsilon}dx{'}\\ \notag
&=\int_{\hat{B}_{2}}K(x{'},0)^{{-1}}\hat{U}^{4}_{\varepsilon}dx{'}+
\int_{\hat{B}_{2}\setminus{\hat{B}_{1}}}K(x{'},0)^{{-1}}\big({\phi}^{4}-1\big)\hat{U}^{4}_{\varepsilon}dx{'}\\ \label{q1}
&=\int_{\hat{B}_{2}}K(x{'},0)^{{-1}}\hat{U}^{4}_{\varepsilon}dx{'}+O({\varepsilon}^2)\\ \notag
&=\int_{\hat{B}_{2}}\hat{U}^{4}_{\varepsilon}dx{'}
+\int_{\hat{B}_{2}}\big(K(x{'},0)^{{-1}}-1\big)\hat{U}^{4}_{\varepsilon}dx{'}+O({\varepsilon}^2)\\ \notag
&=B_{3}^{2}+\int_{\hat{B}_{{2}/{\varepsilon}}}\frac{e^{-\frac{{\varepsilon}^{2}|y'|^{2}}{4}}-1}
{{{{(|y{'}|^{2}+1)^{2}}}}}dy{'}+O({\varepsilon}^{2}).\notag
  \end{align}
Similar as (\ref{t1}), we deduce that
\begin{equation*}\label{q02}
  \begin{aligned}
\int_{\hat{B}_{2/\varepsilon}}\frac{ e^{-\frac{{\varepsilon}^{2}|y{'}|^{2}}{4}}-1}
{{{{(|y{'}|^{2}+1)^{2}}}}}dy{'}
&=-\frac{\varepsilon^{2}}{4}\int_{\hat{B}_{2/\varepsilon}}\frac{|y{'}|^{2}}
{(|y{'}|^{2}+1)^{2}}dy{'}
+O\Big(\varepsilon^4\int_{\hat{B}_{2/\varepsilon}}\frac{ |y{'}|^{4}}
{(|y{'}|^{2}+1)^{2}}dy{'}\Big)\\
&=-c_1\varepsilon^{2}-\frac{\varepsilon^{2}}{4}\int_{\hat{B}_{2/\varepsilon}\setminus \hat{B}_{1}}\frac{|y{'}|^{2}}
{(|y{'}|^{2}+1)^{2}}dy{'}+O(\varepsilon^4)
\\
&\quad+O\Big(\varepsilon^4\int_{\hat{B}_{2/\varepsilon}\setminus \hat{B}_{1}}\frac{ |y{'}|^{4}}
{(|y{'}|^{2}+1)^{2}}dy{'}\Big),
  \end{aligned}
  \end{equation*}
where $c_1$ is a positive constant. Since
\begin{equation*}
\int_{\hat{B}_{2/\varepsilon}\setminus \hat{B}_{1}}\frac{ |y{'}|^{2}}
{(|y{'}|^{2}+1)^{2}}dy{'}
=O\Big(\int_{\hat{B}_{2/\varepsilon}\setminus \hat{B}_{1}}\frac{ 1}
{|y{'}|^{2}}dy{'}\Big)
=O\Big(\int_{1}^{{{2}/\varepsilon}}r^{-1}dr\Big)
=O(|\ln\varepsilon|)
\end{equation*}
and
\begin{equation*}
\int_{\hat{B}_{2/\varepsilon}\setminus \hat{B}_{1}}\frac{ |y{'}|^{4}}
{(|y{'}|^{2}+1)^{2}}dy{'}
=O\Big(\int_{\hat{B}_{2/\varepsilon}\setminus \hat{B}_{1}}dy{'}\Big)
=O(\varepsilon^{-2}),
\end{equation*}
there holds
\begin{equation*}\label{q5}
\int_{\hat{B}_{2/\varepsilon}}\frac{ e^{-\frac{{\varepsilon}^{2}|y{'}|^{2}}{4}}-1}
{{{{(|y{'}|^{2}+1)^{2}}}}}dy{'}=O(\varepsilon^2|\ln\varepsilon|).
\end{equation*}
This identity and \eqref{q1} imply that for $N=3$,
\begin{equation}\label{q4}
{\|\hat{u}_{{\varepsilon}}\|}_{L^{{2}_{*}}_{K}({\mathbb{R}}^{N-1})}^{2_{*}}
=B_{3}^2+O({\varepsilon}^2|\ln\varepsilon|).
\end{equation}
Hence, combining \eqref{xx8} with \eqref{q4}, there holds
\begin{equation*}
{\|\hat{u}_{{\varepsilon}}\|}_{L^{{2}_{*}}_{K}({\mathbb{R}}^{N-1})}^{2_{*}}
=
\begin{cases}
B_{N}^{2_*/2}- \frac{D_{3,N}}{4(N-2)}{\varepsilon}^{2}+o({\varepsilon}^{2}),~&~~N\geq4,\\[1mm]
B_3^2+O({\varepsilon}^2|\ln\varepsilon|),~&~~N=3.\\[1mm]
\end{cases}
\end{equation*}
Moreover, we obtain from Lemma 5.4 in \cite{2021} that
\begin{equation*}
D_{3,N}=\frac{\omega_{N-1}}{2}B\Big(\frac{N+1}{2}, \frac{N-3}{2}\Big).
\end{equation*}
We complete the proof of this Lemma.
\end{proof}

Now, we are going to compute the ${L^{2}_{K}({\mathbb{R}}^{N}_{+})}$-norm of $\hat{u}_{{\varepsilon}}$ as $\varepsilon\rightarrow0$.
\begin{Lem}\label{lem4.3}
There holds
\begin{equation*}
{\|\hat{u}_{{\varepsilon}}\|}_{L^{2}_{K}({\mathbb{R}}^{N}_{+})}^{2}=
\begin{cases}
\hat{d}_{N}{{\varepsilon}}^{2}+o({{\varepsilon}}^{2}),~&~~N\geq5,\\[1mm]
\frac{\omega_4}{2}\varepsilon^2|\ln\varepsilon|+O({{\varepsilon}}^{2}),~&~~N=4,\\[1mm]
O({\varepsilon}),~&~~N=3 \\[1mm]
\end{cases}
\end{equation*}
as $\varepsilon\rightarrow 0$, $\hat{d}_{N}$ is given by \eqref{a66} and $\omega_{4}$ is the area of unit sphere in ${\mathbb{R}^{4}}$.
\end{Lem}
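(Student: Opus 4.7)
The plan is to mimic the strategy used for Lemma \ref{lem3.1}, but with $\hat{U}_{\varepsilon}$ (centered at the shift $\varepsilon$) in place of $U_{\varepsilon}$ (centered at $\varepsilon x_N^0$). Since $K(x)\hat{u}_{{\varepsilon}}^{2}=\phi^{2}\hat{U}_{{\varepsilon}}^{2}$, the starting identity is
\begin{equation*}
{\|\hat{u}_{{\varepsilon}}\|}_{L^{2}_{K}({\mathbb{R}}^{N}_{+})}^{2}=\int_{\mathbb{R}^{N}_{+}}\phi^{2}\hat{U}_{{\varepsilon}}^{2}\,dx=\int_{B_{2}^{+}}\hat{U}_{{\varepsilon}}^{2}\,dx+\int_{B_{2}^{+}\setminus B_{1}^{+}}(\phi^{2}-1)\hat{U}_{{\varepsilon}}^{2}\,dx.
\end{equation*}
On $B_{2}^{+}\setminus B_{1}^{+}$ one has $\hat{U}_{\varepsilon}^{2}\le C\,\varepsilon^{N-2}$, so the second integral is $O(\varepsilon^{N-2})$ uniformly. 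Thus it suffices to estimate $\int_{B_{2}^{+}}\hat{U}_{\varepsilon}^{2}\,dx$, which after the change of variables $y=x/\varepsilon$ becomes
\begin{equation*}
\varepsilon^{2}\int_{B_{2/\varepsilon}^{+}}\frac{dy}{(|y'|^{2}+|y_{N}+1|^{2})^{N-2}}.
\end{equation*}

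For $N\ge 5$ the integrand is integrable over $\mathbb{R}^{N}_{+}$, so one can send the truncation to infinity with an error controlled by $\int_{\mathbb{R}^{N}_{+}\setminus B_{2/\varepsilon}^{+}}|y|^{-2(N-2)}\,dy=O(\varepsilon^{N-4})=o(1)$, obtaining the leading term $\hat{d}_{N}\varepsilon^{2}+o(\varepsilon^{2})$. To confirm that the resulting constant coincides with the $\hat{d}_{N}$ of \eqref{a66}, I would pass to cylindrical coordinates $(r,y_{N})$ with $r=|y'|$, evaluate the inner $y'$-integral using the substitution $r=\alpha\rho$ with $\alpha=\sqrt{1+|y_{N}+1|^{2}}$, and recognize the remaining integral as a Beta function of the form $B\bigl(\tfrac{N-1}{2},\tfrac{N-3}{2}\bigr)$, exactly as in the derivations of \eqref{K5}.

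For $N=4$, the integral $\int_{B_{2/\varepsilon}^{+}}(|y'|^{2}+|y_{4}+1|^{2})^{-2}\,dy$ diverges and must be compared to the radial profile. I will write it as $\int_{B_{1}^{+}}$ (which is $O(1)$) plus $\int_{B_{2/\varepsilon}^{+}\setminus B_{1}^{+}}$; on the latter region $(|y'|^{2}+|y_{4}+1|^{2})^{-2}=|y|^{-4}+O(|y|^{-5})$, so that
\begin{equation*}
\int_{B_{2/\varepsilon}^{+}\setminus B_{1}^{+}}\frac{dy}{(|y'|^{2}+|y_{4}+1|^{2})^{2}}=\frac{\omega_{4}}{2}\int_{1}^{2/\varepsilon}r^{-1}\,dr+O(1)=\frac{\omega_{4}}{2}|\ln\varepsilon|+O(1),
\end{equation*}
which, multiplied by $\varepsilon^{2}$, yields the announced $\frac{\omega_{4}}{2}\varepsilon^{2}|\ln\varepsilon|+O(\varepsilon^{2})$ after absorbing the $O(\varepsilon^{N-2})=O(\varepsilon^{2})$ cutoff error. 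For $N=3$ the same splitting gives $\int_{B_{2/\varepsilon}^{+}\setminus B_{1}^{+}}(|y'|^{2}+|y_{3}+1|^{2})^{-1}\,dy=O(\int_{1}^{2/\varepsilon}dr)=O(\varepsilon^{-1})$, which combined with the $\varepsilon^{2}$ prefactor produces the stated $O(\varepsilon)$ bound.

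The only delicate point is the passage in dimensions $N=3,4$ from the shifted integrand $(|y'|^{2}+|y_{N}+1|^{2})^{-(N-2)}$ to the radial template $|y|^{-2(N-2)}$: the difference is $O(|y|^{-2(N-2)-1})$ for $|y|\ge 1$ and integrates to a lower order contribution, so it does not affect the leading divergence $|\ln\varepsilon|$ (for $N=4$) or $\varepsilon^{-1}$ (for $N=3$). Once this comparison is made carefully, Lemma \ref{lem4.3} follows; the argument is otherwise a direct parallel of the computation of $\|u_{\varepsilon}\|_{L^{2}_{K}}^{2}$ carried out in Lemma \ref{lem3.1}.
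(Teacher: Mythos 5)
Your proposal is correct and follows essentially the same route as the paper: split off the cutoff error $O(\varepsilon^{N-2})$, rescale $y=x/\varepsilon$, keep the full integral (convergent) for $N\geq5$, and for $N=3,4$ compare the shifted kernel $(|y'|^{2}+|y_{N}+1|^{2})^{-(N-2)}$ with the radial profile $|y|^{-2(N-2)}$ on $B^{+}_{2/\varepsilon}\setminus B_{1}^{+}$ to extract $\frac{\omega_{4}}{2}\varepsilon^{2}|\ln\varepsilon|$ resp.\ $O(\varepsilon)$. The only cosmetic difference is that you propose to recompute the constant $\hat{d}_{N}$ directly via cylindrical coordinates and a Beta-function identity, whereas the paper simply quotes Lemma 5.4 of \cite{2021} for the value of $D_{4,N}$; both are fine.
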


\begin{proof}
From ${\phi}^{2}\hat{U}_{\varepsilon}^{2}
=\hat{U}_{\varepsilon}^{2}+({\phi}^{2}-1)\hat{U}_{\varepsilon}^{2}$, we have
\begin{equation}\label{wz4}
  \begin{aligned}
{\|\hat{u}_{{\varepsilon}}\|}_{L^{2}_{K}({\mathbb{R}}^{N}_{+})}^{2}
&=\int_{\mathbb{R}^{N}_{+}}{\phi}^{2}\hat{U}_{\varepsilon}^{2}dx
=\int_{\mathbb{R}^{N}_{+}}\hat{U}_{\varepsilon}^{2}dx+O(\varepsilon^{N-2})\\
&={\varepsilon}^{N-2}\int_{\mathbb{R}^{N}_{+}}
\frac{1}{{{(|x{'}|^{2}+|x_{N}+{\varepsilon}|^{2})^{N-2}}}}dx+O({\varepsilon}^{N-2})\\
&=D_{4,N}{\varepsilon}^{2}+O(\varepsilon^{N-2})
 \end{aligned}
\end{equation}
for $N\geq5$, where
\begin{equation*}
D_{4,N}=\int_{\mathbb{R}^{N}_{+}}
\frac{1}{{{(|y{'}|^{2}+|y_{N}+1|^{2})^{N-2}}}}dy.
\end{equation*}

For $N=3, 4$, arguing as above, there holds
\begin{equation}\label{wzz5}
  \begin{aligned}
{\|\hat{u}_{{\varepsilon}}\|}_{L^{2}_{K}({\mathbb{R}}^{N}_{+})}^{2}
&=\int_{\mathbb{R}^{N}_{+}}{\phi}^{2}\hat{U}_{\varepsilon}^{2}dx
=\int_{B_{2}^{+}}\hat{U}_\varepsilon^{2}dx+\int_{ B_{2}^{+}\setminus{B_{1}^{+}}}({\phi}^{2}-1)\hat{U}_\varepsilon^{2}dx\\
&=\int_{B_{2}^{+}}\hat{U}_\varepsilon^{2}dx+O({\varepsilon}^{N-2})\\
&={\varepsilon}^{2}\int_{{{B^{+}_{{2}/{\varepsilon}}}}}
\frac{1}{(|y{'}|^2+|y_{N}+1|^2)^{N-2}}dy+O({\varepsilon}^{N-2})\\
&={\varepsilon}^{2}\int_{B^{+}_{{2}/{\varepsilon}}\setminus{B_{1}^{+}}}
\frac{1}{(|y{'}|^2+|y_{N}+1|^2)^{N-2}}dy+O({\varepsilon}^{2})+O({\varepsilon}^{N-2}).
  \end{aligned}
\end{equation}
Since for $N=4$,
\begin{equation*}
\begin{aligned}
0&<\int_{B_{2/\varepsilon}^{+}\setminus{B_{1}^{+}}}\frac{1}{|y|^{4}}dy-\int_{B^{+}_{{2}/{\varepsilon}}\setminus{B_{1}^{+}}}
\frac{1}{{({|y{'}|^{2}+|y_{4}+1|^{2})^{2}}}}dy\\
&=\int_{B_{2/\varepsilon}^{+}\setminus{B_{1}^{+}}}
\frac{{(|y{'}|^{2}+|y_{4}+1|^{2})^{2}}-|y|^{4}}{|y|^{4}
{(|y{'}|^{2}+|y_{4}+1|^{2})^{2}}}dy=O_\varepsilon(1),
\end{aligned}
\end{equation*}
we deduce that
\begin{equation}\label{wzzz2}
\begin{aligned}
\int_{B^{+}_{{2}/{\varepsilon}}\setminus{B_{1}^{+}}}
\frac{1}{{(|y{'}|^{2}+|y_{4}+1|^{2})^{2}}}dy
&=\int_{B_{2/\varepsilon}^{+}\setminus{B_{1}^{+}}}\frac{1}{|y|^{4}}dy+O_\varepsilon(1)\\
&=\frac{\omega_4}{2}\int_{1}^{{{2}/\varepsilon}}r^{-1}dr+O_\varepsilon(1)\\
&=\frac{\omega_4}{2}(|\ln \varepsilon|+\ln 2)+O_\varepsilon(1),
\end{aligned}
\end{equation}
where $\omega_4$ is the area of unit sphere in $\mathbb{R}^{4}$ and  $O_\varepsilon(1)$ is a constant associated with $\varepsilon$. It is clearly that for $N=3$,
\begin{equation}\label{wzzz3}
\begin{aligned}
\int_{B^{+}_{{2}/{\varepsilon}}\setminus{B_{1}^{+}}}
\frac{1}{{(|y{'}|^{2}+|y_{3}+1|^{2})}}dy
=O\Big(\int_{B_{2/\varepsilon}^{+}\setminus{B_{1}^{+}}}\frac{1}{|y|^{2}}dy\Big)
=O(\varepsilon^{-1}).
\end{aligned}
\end{equation}
In view of \eqref{wzz5}-\eqref{wzzz3}, one has
\begin{equation}\label{wz5}
{\|\hat{u}_{{\varepsilon}}\|}_{L^{2}_{K}({\mathbb{R}}^{N}_{+})}^{2}=
\begin{cases}
\frac{{\omega_4}}{2}\varepsilon^2|\ln\varepsilon|+O({\varepsilon}^2),~&~~N=4,\\[1mm]
O({\varepsilon}),~&~~N=3.\\[1mm]
\end{cases}
\end{equation}

From \eqref{wz4} and \eqref{wz5}, we conclude that
\begin{equation*}
{\|\hat{u}_{{\varepsilon}}\|}_{L^{2}_{K}({\mathbb{R}}^{N}_{+})}^{2}=
\begin{cases}
D_{4,N}{{\varepsilon}}^{2}+o({{\varepsilon}}^{2}),~&~~N\geq5,\\[1mm]
\frac{\omega_4}{2}\varepsilon^2|\ln\varepsilon|+O({{\varepsilon}}^{2}),~&~~N=4,\\[1mm]
O({\varepsilon}),~&~~N=3.\\[1mm]
\end{cases}
\end{equation*}
On the other hand, from Lemma 5.4 in \cite{2021}, we have
\begin{equation*}
D_{4,N}=\frac{\omega_{N-1}}{2(N-4)}B\Big(\frac{N-1}{2}, \frac{N-3}{2}\Big).
\end{equation*}
Hence, the proof is completed.
\end{proof}

It is necessary to state more delicate estimates for $N=3$. To this end, we
introduce the function
\begin{equation*}\label{cw1} \hat{v}_{\varepsilon}(x):=K(x)^{-\frac{1}{2}}\psi(x)\hat{U}_{{\varepsilon}}(x),
\end{equation*}
where $\varepsilon>0$ and $\hat{U}_\varepsilon, \psi$ are defined by \eqref{2.9}, \eqref{xxx1}. The following estimates hold.
\begin{Lem}\label{lem4.4}
Suppose that $N=3$. Then we obtain
\begin{align*}
&{\|\hat{v}_{{\varepsilon}}\|}^{2}<A_N+\frac{(3+\sqrt{5})\varepsilon}{4} \int_{\mathbb{R}^{N}_{+}}
\frac{\psi^2}{|x|^{2}}dx,\\
&{\|\hat{v}_{{\varepsilon}}\|}_{L^{2}_{K}({\mathbb{R}}^{N}_{+})}^{2}> \varepsilon\int_{\mathbb{R}^{N}_{+}}
\frac{\psi^2}{|x|^{2}}dx-c_1\varepsilon^2|\ln\varepsilon|-c_2\varepsilon^2,
\end{align*}
where $\varepsilon>0$ is sufficiently small, $A_N, \psi$ are given by \eqref{aa1}, \eqref{xxx1} and $c_1$, $c_2$ are positive constants.
\end{Lem}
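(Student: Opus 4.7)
My plan is to mirror the proof of Lemma 3.2 line by line, with $U_\varepsilon$ replaced throughout by $\hat U_\varepsilon$. Two elementary identities do all the work for the first estimate: $\nabla K^{-1/2} = -\frac{x}{4}K^{-1/2}$ (since $K(x)=e^{|x|^2/4}$) and $\nabla\psi=-\frac{x\psi}{4\sqrt{5}}$. For $N=3$ one has $\hat U_\varepsilon(x)^2 = \varepsilon/(|x'|^2+(x_3+\varepsilon)^2)$, which differs from $U_\varepsilon^2$ of Lemma 3.2 only in that it lacks the $\varepsilon^2$ term in the denominator and the prefactor $k_3=3^{1/4}$; this is precisely why every $\sqrt 3$ appearing in Lemma 3.2 is absent here.

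For the first inequality, the product rule gives
\[
K^{1/2}\nabla\hat v_\varepsilon = \psi\nabla\hat U_\varepsilon + \Big(\nabla\psi-\frac{x\psi}{4}\Big)\hat U_\varepsilon,
\]
so that on squaring and integrating, $\|\hat v_\varepsilon\|^2$ splits into six terms exactly analogous to those in the expansion of $\|v_\varepsilon\|^2$ in Lemma 3.2. The leading piece $\int_{\mathbb R^3_+}\psi^2|\nabla\hat U_\varepsilon|^2$ is strictly less than $A_3=\int_{\mathbb R^3_+}|\nabla\hat U_\varepsilon|^2$ because $\psi<1$ on $\mathbb R^3_+\setminus\{0\}$. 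Into the remaining five I will substitute $\nabla\psi=-x\psi/(4\sqrt{5})$ and use $\hat U_\varepsilon(x\cdot\nabla\hat U_\varepsilon) = -\varepsilon(|x'|^2+x_3(x_3+\varepsilon))/(|x'|^2+(x_3+\varepsilon)^2)^2$; each resulting term becomes a scalar multiple of $\varepsilon\psi^2/(|x'|^2+(x_3+\varepsilon)^2)$ or $\varepsilon\psi^2|x|^2/(|x'|^2+(x_3+\varepsilon)^2)$. The pointwise majorisations $|x'|^2+x_3(x_3+\varepsilon)\le|x'|^2+(x_3+\varepsilon)^2$ and $|x|^2\le|x'|^2+(x_3+\varepsilon)^2$ (both valid for $x_3\ge 0$), followed by $\int\psi^2/(|x'|^2+(x_3+\varepsilon)^2)\,dx\le\int\psi^2/|x|^2\,dx$, reduce everything to a combination of $\varepsilon\int\psi^2/|x|^2\,dx$ and $\varepsilon\int\psi^2\,dx$. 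The identity $\int_{\mathbb R^3_+}\psi^2\,dx = 2\sqrt 5\int_{\mathbb R^3_+}\psi^2/|x|^2\,dx$, proved by polar coordinates inside Lemma 3.2, then consolidates these into a single multiple of $\varepsilon\int\psi^2/|x|^2\,dx$; the algebraic simplification $\tfrac{5+\sqrt 5}{10}+\tfrac{\sqrt 5(3+\sqrt 5)}{20}=\tfrac{3+\sqrt 5}{4}$ delivers the stated coefficient.

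The second inequality is more direct. Since $K\hat v_\varepsilon^2=\psi^2\hat U_\varepsilon^2$,
\[
\|\hat v_\varepsilon\|_{L^2_K(\mathbb R^3_+)}^2 = \varepsilon\int_{\mathbb R^3_+}\frac{\psi^2}{|x'|^2+(x_3+\varepsilon)^2}\,dx.
\]
From $|x'|^2+(x_3+\varepsilon)^2-|x|^2 = 2\varepsilon x_3+\varepsilon^2\ge\varepsilon^2$ on $\mathbb R^3_+$ we get $|x'|^2+(x_3+\varepsilon)^2\ge|x|^2+\varepsilon^2$, whence the (positive) defect satisfies
\[
\int\frac{\psi^2}{|x|^2}dx - \int\frac{\psi^2}{|x'|^2+(x_3+\varepsilon)^2}dx \;\le\; 2\varepsilon\int\frac{\psi^2}{|x|(|x|^2+\varepsilon^2)}dx + \varepsilon^2\int\frac{\psi^2}{|x|^2(|x|^2+\varepsilon^2)}dx.
\]
In polar coordinates, splitting the $r$-integral at $r=\varepsilon$, the first integral on the right is $O(|\ln\varepsilon|)$ and the second is $O(1/\varepsilon)$, exactly as in the estimates of $I_1$ and $I_2$ in the proof of Lemma 3.2. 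Multiplying through by $\varepsilon$ yields the desired lower bound with some $c_1,c_2>0$.

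The main obstacle is the bookkeeping in the first inequality: six terms must be combined and the coefficient $(3+\sqrt 5)/4$ emerges only after both pointwise majorisations and the weighted identity for $\int\psi^2\,dx$ have been applied in the correct order. Everything else (the weighted $L^2_K$ defect and the polar-coordinate bounds) is routine and follows the template already laid down in Lemma 3.2.
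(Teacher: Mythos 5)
Your proposal is correct and follows essentially the same route as the paper: the same six-term expansion of $\|\hat v_\varepsilon\|^2$, the bound $\int\psi^2|\nabla\hat U_\varepsilon|^2<A_3$, the substitution $\nabla\psi=-x\psi/(4\sqrt5)$ with the pointwise majorisations and the identity $\int_{\mathbb{R}^3_+}\psi^2\,dx=2\sqrt5\int_{\mathbb{R}^3_+}\psi^2|x|^{-2}dx$ producing the coefficient $(3+\sqrt5)/4$, and the same defect estimate $2\varepsilon x_3+\varepsilon^2$ with the polar-coordinate splitting at $r=\varepsilon$ for the weighted $L^2_K$ lower bound. No gaps.
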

\begin{proof}
Simple computation shows that
  \begin{align*}
{\|\hat{v}_{{\varepsilon}} \|}^{2}
&=\varepsilon\int_{\mathbb{R}^{3}_{+}}\frac{|\nabla{\psi}|^{2}}{(|x{'}|^{2}+|x_{3}+{\varepsilon}|^{2})}dx
-2\varepsilon\int_{\mathbb{R}^{3}_{+}}\frac{\psi{\nabla{\psi}\cdot{(x{'},x_3+\varepsilon)}}}{(|x{'}|^{2}+|x_{3}+{\varepsilon}|^{2})^2}dx\\
&\quad-\frac{\varepsilon}{2}\int_{\mathbb{R}^{3}_{+}}\frac{\psi(x\cdot{\nabla{\psi}})}{(|x{'}|^{2}+|x_{3}+{\varepsilon}|^2)}dx
+\int_{\mathbb{R}^{3}_{+}}{\psi}^{2}|\nabla{{{\hat{U}}_{\varepsilon}}}|^{2}dx\\
&\quad+\frac{\varepsilon}{2}\int_{\mathbb{R}^{3}_{+}}\frac{{\psi}^2\big(|x{'}|^2+x_3(x_3+\varepsilon)\big)}{(|x{'}|^{2}+|x_{3}+{\varepsilon}|^2)^2}dx
+\frac{\varepsilon}{16}\int_{\mathbb{R}^{3}_{+}}\frac{{\psi}^2|x|^2}{(|x{'}|^{2}+|x_{3}+{\varepsilon}|^2)}dx\\
&<A_3+\varepsilon\int_{\mathbb{R}^{3}_{+}}\frac{|\nabla{\psi}|^{2}}{(|x{'}|^{2}+|x_{3}+{\varepsilon}|^{2})}dx
-2\varepsilon\int_{\mathbb{R}^{3}_{+}}\frac{\psi{\nabla{\psi}\cdot{(x{'},x_3+\varepsilon)}}}{(|x{'}|^{2}+|x_{3}+{\varepsilon}|^{2})^2}dx\\
&\quad-\frac{\varepsilon}{2}\int_{\mathbb{R}^{3}_{+}}\frac{\psi(x\cdot{\nabla{\psi}})}{(|x{'}|^{2}+|x_{3}+{\varepsilon}|^2)}dx
+\frac{\varepsilon}{2}\int_{\mathbb{R}^{3}_{+}}\frac{{\psi}^2}{(|x{'}|^{2}+|x_{3}+{\varepsilon}|^2)}dx+\frac{\varepsilon}{16}\int_{\mathbb{R}^{3}_{+}}{\psi}^2dx.
  \end{align*}
From the definition of $\psi$, we have
\begin{equation*}
\nabla\psi(x)=-\frac{1}{4\sqrt{5}}\psi x,
\end{equation*}
and then
\begin{equation*}\label{cw1}
{\|\hat v_{{\varepsilon}} \|}^{2}
<A_3+\frac{(3+\sqrt{5})\varepsilon}{40}\int_{\mathbb{R}^{3}_{+}}{\psi}^{2}dx
+\frac{(5+\sqrt{5})\varepsilon}{10}\int_{\mathbb{R}^{3}_{+}}\frac{\psi^2}{(|x{'}|^{2}+|x_{3}+{\varepsilon}|^{2})}dx.
\end{equation*}
This inequality and \eqref{w2} imply that
\begin{equation}\label{cw2}
{\|\hat{v}_{{\varepsilon}}\|}^{2}<A_3+\frac{(3+\sqrt{5})\varepsilon}{4} \int_{\mathbb{R}^{3}_{+}}
\frac{e^{-\frac{|x|^2}{4\sqrt{5}}}}{|x|^{2}}dx.
\end{equation}

Notice that
\begin{equation}\label{aA1}
  \begin{aligned}
{\|\hat{v}_{{\varepsilon}}\|}_{L^{2}_{K}({\mathbb{R}}^{3}_{+})}^{2}
&=\int_{\mathbb{R}^{3}_{+}}{\psi}^{2}\hat{U}_{\varepsilon}^{2}dx
=\varepsilon\int_{\mathbb{R}^{3}_{+}}\frac{e^{-\frac{|x|^2}{4\sqrt{5}}}}{(|x{'}|^{2}+|x_{3}+{\varepsilon}|^2)}dx,
 \end{aligned}
\end{equation}
and
\begin{equation}\label{aA2}
  \begin{aligned}
0&<\int_{\mathbb{R}^{3}_{+}}\frac{e^{-\frac{|x|^2}{4\sqrt{5}}}}{|x|^2}dx-
\int_{\mathbb{R}^{3}_{+}}\frac{e^{-\frac{|x|^2}{4\sqrt{5}}}}{(|x{'}|^{2}+|x_{3}+{\varepsilon}|^2)}dx\\
&=\int_{\mathbb{R}^{3}_{+}}\frac{e^{-\frac{|x|^2}{4\sqrt{5}}}(2{\varepsilon}x_3+\varepsilon^2)}{|x|^2(|x{'}|^{2}+|x_{3}+{\varepsilon}|^2)}dx\\
&<2{\varepsilon}\int_{\mathbb{R}^{3}_{+}}\frac{e^{-\frac{|x|^2}{4\sqrt{5}}}}{|x|(\varepsilon^2+|x|^{2})}dx
+\varepsilon^2\int_{\mathbb{R}^{3}_{+}}\frac{e^{-\frac{|x|^2}{4\sqrt{5}}}}{|x|^2(\varepsilon^2+|x|^{2})}dx.
 \end{aligned}
\end{equation}
Let
\begin{equation*}
I_3:=2{\varepsilon}\int_{\mathbb{R}^{3}_{+}}\frac{e^{-\frac{|x|^2}{4\sqrt{5}}}}{|x|(\varepsilon^2+|x|^{2})}dx,
\quad
I_4:=\varepsilon^2\int_{\mathbb{R}^{3}_{+}}\frac{e^{-\frac{|x|^2}{4\sqrt{5}}}}{|x|^2(\varepsilon^2+|x|^{2})}dx.
\end{equation*}
Using the same techniques as \eqref{A3} and \eqref{A4}, one has
\begin{equation}\label{aA3}
I_3
=\omega_3{\varepsilon}+\omega_3{\varepsilon}|\ln\varepsilon|+\bar{c}_1\varepsilon,
\quad
I_4
=\omega_3{\varepsilon},
\end{equation}
where $\omega_3$ is the area of unit sphere in $\mathbb{R}^{3}$ and $\bar{c}_1>0$. In view of \eqref{aA2} and \eqref{aA3}, there holds
\begin{equation}\label{aA5}
\int_{\mathbb{R}^{3}_{+}}\frac{e^{-\frac{|x|^2}{4\sqrt{5}}}}{(|x{'}|^{2}+|x_{3}+{\varepsilon}|^2)}dx
>\int_{\mathbb{R}^{3}_{+}}\frac{e^{-\frac{|x|^2}{4\sqrt{5}}}}{|x|^2}dx-\bar{c}_2{\varepsilon}|\ln\varepsilon|-\bar{c}_3\varepsilon,
\end{equation}
where $\bar{c}_2, \bar{c}_3>0$.
It follows from \eqref{aA1} and \eqref{aA5} that
\begin{equation}\label{cw3}
{\|\hat{v}_{{\varepsilon}}\|}_{L^{2}_{K}({\mathbb{R}}^{3}_{+})}^{2}> \varepsilon \int_{\mathbb{R}^{3}_{+}}
\frac{e^{-\frac{|x|^2}{4\sqrt{5}}}}{|x|^{2}}dx-\bar{c}_2\varepsilon^2|\ln\varepsilon|-\bar{c}_3\varepsilon^2.
\end{equation}

Hence, the proof is completed from \eqref{cw2} and \eqref{cw3}.
\end{proof}

The following Lemma deals with the ${L^{4}_{K}({\mathbb{R}}^{2})}$-norm of $\hat{v}_{{\varepsilon}}$ as  $\varepsilon\rightarrow0$.
\begin{Lem}\label{lem4.5}
If $N=3$, one has
\begin{align*}
{\|\hat{v}_{{\varepsilon}}\|}_{L^{{2}_{*}}_{K}({\mathbb{R}}^{N-1})}^{2_{*}}&=B_{N}^{{2_*}/2}+O({\varepsilon}^{2}|\ln\varepsilon|),
\end{align*}
where $\varepsilon>0$ is sufficiently small and $B_N$ is defined in \eqref{aa1}.
\end{Lem}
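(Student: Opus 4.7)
The plan is to reduce the computation to the single integral
$$\|\hat{v}_{\varepsilon}\|_{L^{4}_{K}(\mathbb{R}^{2})}^{4}=\int_{\mathbb{R}^{2}}K(x',0)^{-1}\psi(x',0)^{4}\hat{U}_{\varepsilon}(x',0)^{4}\,dx'$$
and then compare it to $B_{3}^{2}=\int_{\mathbb{R}^{2}}\hat{U}_{\varepsilon}(x',0)^{4}\,dx'$. Observing that $K(x',0)^{-1}\psi(x',0)^{4}=e^{-\alpha_{2}|x'|^{2}}$ with $\alpha_{2}=\tfrac{1}{4}+\tfrac{1}{2\sqrt{5}}$ (the same constant appearing in Lemma \ref{lem3.3}), the proof reduces to estimating
$$\|\hat{v}_{\varepsilon}\|_{L^{4}_{K}(\mathbb{R}^{2})}^{4}-B_{3}^{2}=\int_{\mathbb{R}^{2}}\frac{\varepsilon^{2}\bigl(e^{-\alpha_{2}|x'|^{2}}-1\bigr)}{(|x'|^{2}+\varepsilon^{2})^{2}}\,dx',$$
and by the change of variables $y'=x'/\varepsilon$ this equals
$$\int_{\mathbb{R}^{2}}\frac{e^{-\alpha_{2}\varepsilon^{2}|y'|^{2}}-1}{(|y'|^{2}+1)^{2}}\,dy'.$$

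Next I would split this integral as $\int_{\hat{B}_{1/\varepsilon}}+\int_{\mathbb{R}^{2}\setminus\hat{B}_{1/\varepsilon}}$, exactly as in the last part of Lemma \ref{lem3.3} and in Lemma \ref{lem4.2}. On the inner region, Taylor's formula gives $e^{-\alpha_{2}\varepsilon^{2}|y'|^{2}}-1=-\alpha_{2}\varepsilon^{2}|y'|^{2}+O(\varepsilon^{4}|y'|^{4})$, so the inner piece is controlled by the two integrals
$$\varepsilon^{2}\!\int_{\hat{B}_{1/\varepsilon}}\frac{|y'|^{2}}{(|y'|^{2}+1)^{2}}\,dy'\quad\text{and}\quad \varepsilon^{4}\!\int_{\hat{B}_{1/\varepsilon}}\frac{|y'|^{4}}{(|y'|^{2}+1)^{2}}\,dy',$$
which by passing to polar coordinates and using the radial estimates already established in the proof of Lemma \ref{lem3.3} (namely $\int_{1}^{1/\varepsilon}r^{-1}\,dr=|\ln\varepsilon|$ and $\int_{1}^{1/\varepsilon}r\,dr=O(\varepsilon^{-2})$) are $O(\varepsilon^{2}|\ln\varepsilon|)$ and $O(\varepsilon^{2})$ respectively. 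On the outer region, the trivial bound $|e^{-\alpha_{2}\varepsilon^{2}|y'|^{2}}-1|\leq 1$ together with $\int_{\mathbb{R}^{2}\setminus\hat{B}_{1/\varepsilon}}|y'|^{-4}\,dy'=O(\varepsilon^{2})$ yields an $O(\varepsilon^{2})$ contribution.

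Combining the two pieces, the difference $\|\hat{v}_{\varepsilon}\|_{L^{4}_{K}(\mathbb{R}^{2})}^{4}-B_{3}^{2}$ is of order $O(\varepsilon^{2}|\ln\varepsilon|)$, which is the claimed estimate (recalling that $2_{*}/2=2$ for $N=3$, so $B_{3}^{2_{*}/2}=B_{3}^{2}$). No genuine obstacle is expected here; the argument is a direct transcription of the $N=3$ boundary-norm computation from Lemma \ref{lem3.3}, the only changes being that $\hat{U}_{\varepsilon}$ replaces $U_{\varepsilon}$ (so the shift $\sqrt{3}\varepsilon$ in the denominator becomes $\varepsilon$ and the constant factor $k_{3}^{4}$ disappears), both of which are harmless because the leading power behavior and the divergence of $\int |y'|^{-2}\,dy'$ in two dimensions are unchanged. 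The only point requiring a bit of care is keeping track of the inner/outer cut-off at radius $1/\varepsilon$ in order to extract the $|\ln\varepsilon|$ factor precisely.
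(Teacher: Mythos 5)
Your argument is correct and coincides with the paper's proof: the paper also writes $K(x',0)\hat{v}_{\varepsilon}^{4}=e^{-\alpha_{2}|x'|^{2}}\hat{U}_{\varepsilon}^{4}$, rescales by $y'=x'/\varepsilon$, splits at $\hat{B}_{1/\varepsilon}$, and invokes the same Taylor expansion and radial estimates as in \eqref{w12}--\eqref{w13} of Lemma \ref{lem3.3} to get the $O(\varepsilon^{2}|\ln\varepsilon|)$ remainder. No discrepancy to report.
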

\begin{proof}
Observe that
\begin{equation*}\label{ww10}
  \begin{aligned}
{\|\hat{v}_{{\varepsilon}}\|}_{{L}_{K}^{4}({\mathbb{R}}^{2})}^{4}
&=\int_{\mathbb{R}^{2}}K(x{'},0)^{{-1}}{\psi}^{4}\hat{U}^{4}_{\varepsilon}dx{'}
=\int_{\mathbb{R}^{2}}e^{-\alpha_2|x'|^2}\hat{U}^{4}_{\varepsilon}dx{'}\\
&=\int_{\mathbb{R}^{2}}\hat{U}^{4}_{\varepsilon}dx{'}+\int_{\mathbb{R}^{2}}(e^{-\alpha_2|x'|^2}-1)\hat{U}^{4}_{\varepsilon}dx{'}\\
&=B_{3}^2+\int_{\mathbb{R}^{2}}\frac{e^{-\alpha_2{{\varepsilon}^{2}|y'|^{2}}-1}}
{{{{(|y{'}|^{2}+1)^{2}}}}}dy{'}\\
&=B_{3}^2+\int_{\hat{B}_{1/\varepsilon}}\frac{e^{-\alpha_2{\varepsilon}^{2}|y'|^{2}}-1}
{{{{(|y{'}|^{2}+1)^{2}}}}}dy{'}
+\int_{\mathbb{R}^{2}\setminus \hat{B}_{1/\varepsilon}}\frac{e^{-\alpha_2{\varepsilon}^{2}|y'|^{2}}-1}
{{{{(|y{'}|^{2}+1)^{2}}}}}dy{'},
  \end{aligned}
\end{equation*}
where $\alpha_2=\frac{1}{4}+\frac{1}{2\sqrt{5}}$. 
Arguing as \eqref{w12} and \eqref{w13}, we obtain
\begin{equation*}
{\|\hat{v}_{{\varepsilon}}\|}_{L^{4}_{K}({\mathbb{R}}^{2})}^{4}=
B_{3}^2+O({\varepsilon}^2|\ln\varepsilon|).
\end{equation*}
We complete the proof of this Lemma.
\end{proof}

Now, we are ready to verify \eqref{4.0}.
\begin{Lem}\label{lem4.6}  The inequality (\ref {4.0}) holds 
if one of the following assumptions holds:
\vskip 0.2cm
	
	$( \romannumeral 1)$  $~N=3,\lambda>\frac{3+\sqrt{5}}{4}$;
\vskip 0.2cm
	
	$( \romannumeral 2)$  $~N\geq4, \lambda>\frac{N}{4}+\frac{N-4}{8}$.
	\vskip 0.2cm
\end{Lem}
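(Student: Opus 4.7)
Following the strategy of Lemma~\ref{lem3.4}, the plan is to test the functional $I^0_{\lambda,0}$ along the ray $t\mapsto t\hat u_\varepsilon$ when $N\geq 4$ (respectively $t\mapsto t\hat v_\varepsilon$ when $N=3$) and verify directly that $\sup_{t>0} I^0_{\lambda,0}(t\hat u_\varepsilon)<\frac{1}{2(N-1)}S_0^{N-1}$. Because the interior critical term is absent, the supremum admits a closed form: for any $a,b>0$,
\[
\sup_{t>0}\Bigl(\tfrac{a}{2}t^2-\tfrac{b}{2_*}t^{2_*}\Bigr)=\frac{1}{2(N-1)}\,\frac{a^{N-1}}{b^{N-2}},
\]
using the arithmetic identity $\tfrac{2_*}{2_*-2}=N-1$. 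Setting $a_\varepsilon:=\|\hat u_\varepsilon\|^2-\lambda\|\hat u_\varepsilon\|_{L^2_K(\mathbb{R}^N_+)}^2$ and $b_\varepsilon:=\|\hat u_\varepsilon\|_{L^{2_*}_K(\mathbb{R}^{N-1})}^{2_*}$, and invoking $A_N/B_N=S_0$ together with $(B_N^{2_*/2})^{N-2}=B_N^{N-1}$, the target inequality \eqref{4.0} reduces to
\[
\frac{a_\varepsilon^{N-1}}{b_\varepsilon^{N-2}}<\Bigl(\frac{A_N}{B_N}\Bigr)^{N-1}
\]
for all sufficiently small $\varepsilon>0$.

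For $N\geq 5$, I would plug the expansions from Lemmas~\ref{lem4.1}--\ref{lem4.3} into this ratio and Taylor-expand to second order in $\varepsilon$, obtaining
\[
\frac{a_\varepsilon^{N-1}}{b_\varepsilon^{N-2}}=\frac{A_N^{N-1}}{B_N^{N-1}}\left[1+\varepsilon^2\Bigl((N-1)\frac{\hat\alpha_N-\lambda\hat d_N}{A_N}+(N-2)\frac{\hat\gamma_N}{B_N^{2_*/2}}\Bigr)+o(\varepsilon^2)\right].
\]
The bracket is negative precisely when $\lambda>\hat\alpha_N/\hat d_N+\tfrac{N-2}{N-1}A_N\hat\gamma_N/(\hat d_N B_N^{2_*/2})$. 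The final step is the algebraic reduction of this threshold to the clean closed form $\tfrac{N}{4}+\tfrac{N-4}{8}$; this is done from the Beta-function expressions \eqref{a5}, \eqref{a66}, \eqref{a7}, the Gamma recursion, and the known explicit values of $A_N,B_N$ for the extremal $\hat U_\varepsilon$, whereupon the ratios $\hat\alpha_N/\hat d_N$ and $A_N\hat\gamma_N/(\hat d_N B_N^{2_*/2})$ both collapse to elementary expressions linear in $N$.

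For $N=4$, the decisive observation is that in Lemmas~\ref{lem4.1} and \ref{lem4.3} the coefficient $\omega_4/2$ of the $\varepsilon^2|\ln\varepsilon|$ term is the \emph{same} in $\|\hat u_\varepsilon\|^2$ and in $\|\hat u_\varepsilon\|_{L^2_K}^2$. Hence $a_\varepsilon=A_4+\tfrac{\omega_4}{2}(1-\lambda)\varepsilon^2|\ln\varepsilon|+O(\varepsilon^2)$, which under $\lambda>1=\tfrac{N}{4}+\tfrac{N-4}{8}$ has a strictly negative dominant correction. Since $b_\varepsilon=B_4^{2_*/2}-\hat\gamma_4\varepsilon^2+o(\varepsilon^2)$ carries no logarithmic term, the ratio $a_\varepsilon^3/b_\varepsilon^2$ lies strictly below $A_4^3/B_4^3$ for all small $\varepsilon$, and \eqref{4.0} follows.

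For $N=3$, I would invoke Lemmas~\ref{lem4.4} and \ref{lem4.5}: writing $I:=\int_{\mathbb{R}^{3}_{+}}\psi^{2}|x|^{-2}\,dx>0$, one finds
\[
a_\varepsilon<A_3+\Bigl(\tfrac{3+\sqrt 5}{4}-\lambda\Bigr)\varepsilon\,I+o(\varepsilon),\qquad b_\varepsilon=B_3^{2}+O(\varepsilon^{2}|\ln\varepsilon|),
\]
so $b_\varepsilon$ has no $O(\varepsilon)$ correction. Under the hypothesis $\lambda>\tfrac{3+\sqrt 5}{4}$ the $O(\varepsilon)$ correction to $a_\varepsilon^2/b_\varepsilon$ is strictly negative, giving \eqref{4.0}. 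The main obstacle will be the $N\geq 5$ algebraic reduction to $\tfrac{N}{4}+\tfrac{N-4}{8}$, where all the Beta-function bookkeeping concentrates; the $N=3,4$ cases are comparatively elementary once the Gaussian-weighted test function $\hat v_\varepsilon$ is chosen for $N=3$ to counteract the slow decay that otherwise forces $\|\hat u_\varepsilon\|_{L^2_K}^2=O(\varepsilon)$ and prevents control of the ratio.
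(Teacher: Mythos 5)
Your proposal is correct and follows essentially the same route as the paper: testing along $t\hat u_\varepsilon$ (resp.\ $t\hat v_\varepsilon$ for $N=3$), using the closed-form supremum $\frac{1}{2(N-1)}a_\varepsilon^{N-1}/b_\varepsilon^{N-2}$, and expanding via Lemmas \ref{lem4.1}--\ref{lem4.5}, and your $N\geq5$ threshold $\frac{\hat\alpha_N}{\hat d_N}+\frac{N-2}{N-1}\frac{A_N\hat\gamma_N}{\hat d_N B_N^{2_*/2}}$ is exactly the paper's $(\hat\alpha_N+\xi_N)/\hat d_N$. The one step you only sketch---reducing that threshold to $\frac{N}{4}+\frac{N-4}{8}$---is precisely the identity the paper also does not rederive, citing instead the proof of Proposition 5.2 in \cite{2021}.
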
	
\begin{proof}
For $N=3$, by the definition of $\hat{c}_{\lambda, \mu}$ and a straightforward calculation, we have
\begin{equation*}\label{4.1}
\hat{c}_{\lambda, \mu}\leq\sup\limits_{s>0}I^0_{\lambda,0}(s\hat v_\varepsilon)
=\frac{1}{4}
\Bigg(\frac{{\|\hat v_{{\varepsilon}}\|}^{2}-\lambda \|\hat v_{{\varepsilon}}\|_{L^{{2}}_{K}({\mathbb{R}}^{3}_{+})}^{2}}
{\|\hat v_\varepsilon\|_{L^{4}_{K}({\mathbb{R}}^{2})}^{2}}\Bigg)^{2}.
\end{equation*}
The inequality (\ref {4.0}) holds if we claim
\begin{equation}\label{4.11}
\frac{{\|\hat v_\varepsilon\|}^{2}-\lambda \|\hat v_\varepsilon\|_{L^{{2}}_{K}({\mathbb{R}}^{3}_{+})}^{2}}
{\|\hat v_\varepsilon\|_{L^{4}_{K}({\mathbb{R}}^{2})}^{2}}<S_0.
\end{equation}
It follows from Lemma \ref{lem4.4} and Lemma \ref{lem4.5} that
\begin{equation*}\label{4.111}
  \begin{aligned}
\frac{{\|\hat v_\varepsilon \|}^{2}-\lambda \|\hat v _\varepsilon\|_{L^{{2}}_{K}({\mathbb{R}}^{3}_{+})}^{2}}
{\|\hat v_\varepsilon\|_{L^{4}_{K}({\mathbb{R}}^{2})}^{2}}
&<
\frac{A_3+\varepsilon\big(\frac{3+\sqrt{5}}{4}-\lambda\big) \int_{\mathbb{R}^{3}_{+}}
\frac{\psi^2}{|x|^{2}}dx+\lambda
c_1\varepsilon^2|\ln\varepsilon|+\lambda c_2\varepsilon^2}{B_{3}+O({\varepsilon}^{2}|\ln\varepsilon|)}\\
&=\frac{A_3}{B_{3}}+\varepsilon\Big(\frac{3+\sqrt{5}}{4}-\lambda\Big)B_3^{-1} \int_{\mathbb{R}^{3}_{+}}\frac{\psi^2}{|x|^{2}}dx
+O({\varepsilon}^{2}|\ln\varepsilon|)\\
&=S_0+\varepsilon\Big(\frac{3+\sqrt{5}}{4}-\lambda\Big){B_3^{-1}} \int_{\mathbb{R}^{3}_{+}}\frac{\psi^2}{|x|^{2}}dx+
O({\varepsilon}^{2}|\ln\varepsilon|)<S_0,
  \end{aligned}
\end{equation*}
since $\lambda>\frac{3+\sqrt{5}}{4}$ and $\varepsilon$ small enough. Hence, \eqref{4.11} holds.

For $N\geq4$, simple computation yields that
\begin{equation*}\label{4.1}
\hat{c}_{\lambda, \mu}\leq\sup\limits_{s>0}I^0_{\lambda,0}(s\hat u_\varepsilon)
=\frac{1}{2(N-1)}
\Bigg(\frac{{\|\hat u_\varepsilon\|}^{2}-\lambda \|\hat u_\varepsilon\|_{L^{{2}}_{K}({\mathbb{R}}^{N}_{+})}^{2}}
{\|\hat u_\varepsilon\|_{L^{2_*}_{K}({\mathbb{R}}^{N-1})}^{2}}\Bigg)^{N-1}.
\end{equation*}
This means that we just need to show
\begin{equation}\label{4.12}
\frac{{\|\hat u_\varepsilon\|}^{2}-\lambda \|\hat u_\varepsilon\|_{L^{{2}}_{K}({\mathbb{R}}^{N}_{+})}^{2}}
{\|\hat u_\varepsilon\|_{L^{2_*}_{K}({\mathbb{R}}^{N-1})}^{2}}<S_0.
\end{equation}
For $N=4$, it follows from Lemmas \ref{lem4.1}-\ref{lem4.3} that
\begin{equation}\label{4.2}
  \begin{aligned}
\frac{{\|\hat u_\varepsilon \|}^{2}-\lambda \|\hat u_\varepsilon \|_{L^{{2}}_{K}({\mathbb{R}}^{4}_{+})}^{2}}
{\|\hat u_\varepsilon \|_{L^{3}_{K}({\mathbb{R}}^{3})}^{2}}
&=\frac{A_4+\frac{\omega_4}{2}(1-\lambda){{\varepsilon}}^{2}|\ln{\varepsilon}|+O({\varepsilon}^{2})}
{B_4+O(\varepsilon^2)}\\
&=\frac{A_4}{B_4}+\frac{\omega_4}{2}B_4^{-1}(1-\lambda){{\varepsilon}}^{2}|\ln{\varepsilon}|+O({\varepsilon}^{2})\\
&=S_0+\frac{\omega_4}{2}B_4^{-1}(1-\lambda){{\varepsilon}}^{2}|\ln{\varepsilon}|+O({\varepsilon}^{2})
<S_0,
\end{aligned}
\end{equation}
since $\lambda>1$ and $\varepsilon$ small enough. For $N\geq5$, in view of Lemma \ref{lem4.2} and mean value Theorem, there holds
\begin{equation*}
{\|\hat{u}_{{\varepsilon}}\|}_{L^{{2}_{*}}_{K}({\mathbb{R}}^{N-1})}^{2}
=
B_N-\frac{2}{2_*}B_N^{\frac{2-2_*}{2}}\hat{\gamma}_{N}{{\varepsilon}}^{2}+o({\varepsilon}^{2})
\end{equation*}
and then
\begin{equation}\label{4.33}
\frac{1}{{\|\hat{u}_{{\varepsilon}}\|}_{L^{{2}_{*}}_{K}({\mathbb{R}}^{N-1})}^{2}}
=
\frac{1}{B_N}+\frac{2}{2_*}B_N^{-1-\frac{2_*}{2}}\hat{\gamma}_{N}{{\varepsilon}}^{2}+o({\varepsilon}^{2}).
\end{equation}
From Lemmas \ref{lem4.1}-\ref{lem4.3} and \eqref{4.33}, we conclude \begin{equation}\label{4.3}
\frac{{\|\hat u_\varepsilon \|}^{2}-\lambda \|\hat u_\varepsilon \|_{L^{{2}}_{K}({\mathbb{R}}^{N}_{+})}^{2}}
{\|\hat u_\varepsilon \|_{L^{2_*}_{K}({\mathbb{R}}^{N-1})}^{2}}
=\frac{A_N}{B_N}+\frac{1}{B_N}(\hat{\alpha}_N-\lambda\hat{d}_N+\xi_N){\varepsilon}^{2}
+o({\varepsilon}^{2}),
\end{equation}
where
\begin{equation*}
\xi_N:=\frac{2}{2_*}A_NB_N^{-\frac{2_*}{2}}\hat{\gamma}_{N}.
\end{equation*}
On the other hand, from the proof of Proposition 5.2 in \cite{2021}, we get
\begin{equation*}
\frac{\hat{\alpha}_{N}+\xi_N}{\hat{d}_{N}}=\frac{N}{4}+\frac{N-4}{8}.
\end{equation*}
This, together with \eqref{4.3}, one has that for $\varepsilon$ sufficiently small,
\begin{equation}\label{4.4}
\frac{{\|\hat u_\varepsilon \|}^{2}-\lambda \|\hat u_\varepsilon \|_{L^{{2}}_{K}({\mathbb{R}}^{N}_{+})}^{2}}
{\|\hat u_\varepsilon \|_{L^{2_*}_{K}({\mathbb{R}}^{N-1})}^{2}}
<S_0,
\end{equation}
since
\begin{equation*}
\lambda>\frac{N}{4}+\frac{N-4}{8}.
\end{equation*}
Therefore, \eqref{4.12} holds from \eqref{4.2} and \eqref{4.4}.
\end{proof}

\subsection{Verification of the condition (5.1) when $\lambda\geq0$ and $\mu>0$ }\label{S3.2}

In this subsection, we are committed to checking condition \eqref{4.0} under the assumptions $(2)$-$(3)$ of Theorem \ref{Th1.2}. To this end, we introduce the energy function corresponding to equation \eqref{1.5} with $ \lambda\geq0, a=0$, $\mu>0$ and $2\leq q<2_*$ as follows
\begin{equation}
	I^0_{\lambda, \mu}(u):={\frac{1}{2}}{\|{u}\|}^{2}-{\frac{\lambda }{2}}\|u_+\|_{L^{{2}}_{K}({\mathbb{R}}^{N}_{+})}^{2}
-{\frac{1 }{2_{*}}}\|{u_{+}}\|_{L^{{2}_{*}}_{K}({\mathbb{R}}^{N-1})}^{2_{*}}-
{\frac{\mu}{q}}\|{u_{+}}\|_{L^{q}_{K}({\mathbb{R}}^{N-1})}^{q}.\notag
\end{equation}
 The estimate for $ {\|\hat{u}_{{\varepsilon}}\|}_{L^{q}_{K}({\mathbb{R}}^{N-1})}^{q}$ as $\varepsilon \rightarrow 0$ was given in \cite{116} when $N\ge 4$. In the following, we provide the estimate of  $ {\|\hat{u}_{{\varepsilon}}\|}_{L^{q}_{K}({\mathbb{R}}^{N-1})}^{q}$ for all $N\ge 3$,
which is necessary to verify condition \eqref{4.0}.
\begin{Lem}\label{lem4.7}
Assume that $q\in [2,2^{*})$. There holds
\begin{equation*}\label{bc0}
{\|\hat{u}_{{\varepsilon}}\|}_{L^{q}_{K}({\mathbb{R}}^{N-1})}^{q}
\geq
\begin{cases}
l_1{\varepsilon}^{\theta_N}+o({\varepsilon}^{\theta_N}),~&~~N\geq3,~2< q<2_*,\\[1mm]
l_1{\varepsilon}+o({\varepsilon}),~&~~N\geq4,~q=2,\\[1mm]
l_2{\varepsilon}|\ln{\varepsilon}|+l_3{\varepsilon},~&~~N=3,~q=2\\[1mm]
\end{cases}
\end{equation*}
as $\varepsilon\rightarrow 0$, where $l_{i}, i=1, 2, 3$ are positive constants independent of $\varepsilon$ and  $\theta _{N}$ is given in Lemma \ref{lem3.6}.
\end{Lem}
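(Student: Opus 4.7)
The plan is to mirror Lemma \ref{lem3.6}, exploiting the considerably simpler form of the boundary trace
\begin{equation*}
\hat U_\varepsilon(x',0)=\Big(\frac{\varepsilon}{|x'|^2+\varepsilon^2}\Big)^{(N-2)/2},
\end{equation*}
which carries no interior shift $x_N^0$. First I would observe that $K(x',0)^{1-q/2}=e^{(2-q)|x'|^2/8}\ge e^{-(q-2)/2}>0$ on $\hat B_2(0)\subset\mathbb{R}^{N-1}$ whenever $q\ge 2$, and that $\phi(\cdot,0)\equiv 1$ on $\hat B_1(0)$. Discarding the contribution from $\mathbb{R}^{N-1}\setminus \hat B_1(0)$ therefore yields
\begin{equation*}
\|\hat u_\varepsilon\|^q_{L^q_K(\mathbb{R}^{N-1})}\ge e^{-(q-2)/2}\int_{\hat B_1(0)}\Big(\frac{\varepsilon}{|x'|^2+\varepsilon^2}\Big)^{(N-2)q/2}dx'.
\end{equation*}

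Next I would rescale $x'=\varepsilon y'$, which transforms the right-hand side into $c_0\,\varepsilon^{\theta_N}\int_{\hat B_{1/\varepsilon}(0)}(1+|y'|^2)^{-(N-2)q/2}dy'$, with $\theta_N=N-1-(N-2)q/2\in (0,1]$. For the generic regime (that is, $2<q<2_*$ for any $N\ge 3$, and $q=2$ for $N\ge 4$) it suffices to bound the inner integral from below by its value on the fixed ball $\hat B_1(0)$, which is a positive constant independent of $\varepsilon$; this already produces the announced lower bound $l_1\varepsilon^{\theta_N}+o(\varepsilon^{\theta_N})$, any additional contribution from the annulus $\hat B_{1/\varepsilon}(0)\setminus \hat B_1(0)$ being non-negative.

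The only case requiring a finer analysis is $N=3,\,q=2$, in which $\theta_3=1$ and one must extract a logarithmic enhancement. Here the relevant integral $\int_{\hat B_{1/\varepsilon}(0)}(1+|y'|^2)^{-1}dy'$ can be evaluated directly in polar coordinates on $\mathbb{R}^{2}$ to give $(\omega_2/2)\ln(1+\varepsilon^{-2})$, which is asymptotically $\omega_2|\ln\varepsilon|$ as $\varepsilon\to 0$; multiplying by the prefactor $\varepsilon^{\theta_3}=\varepsilon$ and adding the constant contribution from $\hat B_1(0)$ produces a lower bound of the required form $l_2\varepsilon|\ln\varepsilon|+l_3\varepsilon$. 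I do not anticipate any genuine obstacle, since the computation follows Lemma \ref{lem3.6} almost verbatim. The only point worth care is the identification of the critical index $q=(N-1)/(N-2)$, which separates logarithmic from non-logarithmic asymptotics of the outer integral and which, inside the admissible range $q\in[2,2_*)$, $N\ge 3$, corresponds exactly to the isolated pair $(N,q)=(3,2)$ that drives the third case of the lemma.
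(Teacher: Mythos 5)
Your proposal is correct and follows essentially the same route as the paper's proof: bound the weight $K(x',0)^{1-q/2}$ below by $e^{-(q-2)/2}$ on the support of $\phi$, restrict to $\hat B_1$ where $\phi\equiv 1$, rescale $x'=\varepsilon y'$ to extract $\varepsilon^{\theta_N}$, and treat $(N,q)=(3,2)$ (the case $q=\tfrac{N-1}{N-2}$) separately to obtain the logarithm. The only cosmetic differences are that you evaluate the two-dimensional integral $\int_{\hat B_{1/\varepsilon}}(1+|y'|^2)^{-1}dy'$ directly in polar coordinates rather than comparing with $|y'|^{-2}$ on the annulus, and in the non-critical cases you simply drop the annulus contribution, which is harmless since only a lower bound is required.
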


\begin{proof}
By the definitions of $\hat{u}_{{\varepsilon}}$ and $\phi$, we have
\begin{align}
{\|\hat{u}_{{\varepsilon}}\|}_{L^{q}_{K}({\mathbb{R}}^{N-1})}^q\notag
&=\int_{\mathbb{R}^{N-1}}K(x',0)\hat{u}_{\varepsilon}^qdx'
=\int_{\mathbb{R}^{N-1}}\frac{K(x',0)^{1-\frac{q}{2}}\phi(x',0)^{q}\varepsilon^{\frac{(N-2)q}{2}}}{(|x'|^2+\varepsilon^{2})^{\frac{(N-2)q}{2}}}
dx'\\ \notag
&\geq e^{-\frac{q-2}{2}}\varepsilon^{\frac{(N-2)q}{2} } \int_{\hat{B}_{2}} \frac{\phi(x',0)^{q}}{(|x'|^{2}+\varepsilon^{2})^{\frac{(N-2)q}{2}}} dx' \\ \notag
&\geq e^{-\frac{q-2}{2}}\varepsilon^{\frac{(N-2)q}{2}}\int_{\hat{B}_{1}} \frac{1}{(|x'|^{2}+\varepsilon^{2})^{\frac{(N-2)q}{2}}} dx'\\ \label{ll1}
&=e^{-\frac{q-2}{2}}\varepsilon^{\theta _{N}}\int_{\hat{B}_{1/\varepsilon}} \frac{1}{(|y'|^{2}+1)^{\frac{(N-2)q}{2}}} dy'\\ \notag
&=e^{-\frac{q-2}{2}}\varepsilon^{\theta _{N}}\int_{\hat{B}_{1}} \frac{1}{(|y'|^{2}+1)^{\frac{(N-2)q}{2}}} dy'\\ \notag
&\quad+e^{-\frac{q-2}{2}} \varepsilon^{\theta _{N}}\int_{\hat{B}_{1/\varepsilon} \setminus \hat{B}_{1}} \frac{1}{(|y'|^{2}+1)^{\frac{(N-2)q}{2}}} dy'\\ \notag
&\geq \bar{l}_1\varepsilon^{\theta _{N}}+\bar{l}_2\varepsilon^{\theta _{N}}\int_{\hat{B}_{1/\varepsilon} \setminus \hat{B}_{1}} \frac{1}{|y'|^{(N-2)q}} dy'\\ \notag
&=\bar{l}_1\varepsilon^{\theta _{N}}+\bar{l}_{2}{\omega_{N-1}}\varepsilon^{\theta_{N}} \int_{1}^{{{1}/\varepsilon}}r^{N-2-(N-2)q}dr, \notag
\end{align}
where $\theta _{N}=N-1-\frac{(N-2)q}{2}$, $\bar{l}_1, \bar{l}_2>0$ and $\omega_{N-1}$ is the area of unit sphere in $\mathbb{R}^{N-1}$. Combining \eqref{c2}-\eqref{c4} with \eqref{ll1}, we have
\begin{equation}\label{lc3}
{\|\hat{u}_{{\varepsilon}}\|}_{L^{q}_{K}({\mathbb{R}}^{N-1})}^{q}
\geq
\begin{cases}
\bar{l}_{3}\varepsilon^{\frac{(N-2)q}{2}}+o(\varepsilon^{\frac{(N-2)q}{2}}),~&~~q<\frac{N-1}{N-2},\\[1mm]
\bar{l}_{4} \varepsilon^{\frac{N-1}{2}}|\ln \varepsilon| +\bar{l}_{1}\varepsilon^{\frac{N-1}{2}} ,~&~~q=\frac{N-1}{N-2},\\[1mm]
\bar{l}_{5}\varepsilon^{\theta_{N}}+o(\varepsilon^{\theta_{N}}), ~&~~q>\frac{N-1}{N-2},
\end{cases}
\end{equation}
where constants $\bar{l}_3, \bar{l}_4, \bar{l}_5$ are positive. Thus, the proof is completed from \eqref{lc3}.
\end{proof}

In the following, we are going to verify \eqref{4.0}.

\begin{Lem}\label{lem4.8}
If $N\geq3, \lambda\geq0$, $\mu>0$ and $2\leq q<2_*$, then the inequality \eqref{4.0} holds.
\end{Lem}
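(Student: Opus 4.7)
The plan follows the recipe of Lemma~\ref{lem3.7}, adapted to the functional $I^0_{\lambda,\mu}$ and to the sharp compactness threshold $\frac{1}{2(N-1)}S_0^{N-1}$ appearing in Lemma~\ref{lem2.7}. For $N\geq 4$ I take as test function $\hat{u}_\varepsilon$ from \eqref{b2}, while for $N=3$ I use $\hat{v}_\varepsilon:=K(x)^{-1/2}\psi(x)\hat{U}_\varepsilon(x)$ with $\psi(x)=e^{-|x|^2/(8\sqrt{5})}$, which was introduced before Lemma~\ref{lem4.4}. Define $\hat{g}_\varepsilon(t):=I^0_{\lambda,\mu}(t\hat{u}_\varepsilon)$ (respectively $I^0_{\lambda,\mu}(t\hat{v}_\varepsilon)$). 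The one-variable function $\hat{g}_\varepsilon$ admits a unique maximizer $\tilde{t}_\varepsilon>0$, and combining the asymptotics of Lemmas~\ref{lem4.1}--\ref{lem4.3} (resp.\ Lemmas~\ref{lem4.4}--\ref{lem4.5}) with Lemma~\ref{lem4.7} applied to the critical equation, one finds $\tilde{t}_\varepsilon\to t_0>0$ as $\varepsilon\to 0$; in particular $\tilde{t}_\varepsilon\geq a_1>0$ for some $\varepsilon$-independent constant $a_1$.

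Next, I peel off the $\mu$-contribution via $\tilde{t}_\varepsilon^q\geq a_1^q$:
\begin{equation*}
\hat{g}_\varepsilon(\tilde{t}_\varepsilon)\leq\sup_{t>0}\left\{\frac{t^2}{2}\Big(\|\hat{u}_\varepsilon\|^2-\lambda\|\hat{u}_\varepsilon\|_{L^2_K(\mathbb{R}^N_+)}^2\Big)-\frac{t^{2_*}}{2_*}\|\hat{u}_\varepsilon\|_{L^{2_*}_K(\mathbb{R}^{N-1})}^{2_*}\right\}-\frac{\mu a_1^q}{q}\|\hat{u}_\varepsilon\|_{L^q_K(\mathbb{R}^{N-1})}^q.
\end{equation*}
Since $2_*=2(N-1)/(N-2)$, the inner supremum is attained at the explicit positive zero of the derivative and equals
\begin{equation*}
\frac{1}{2(N-1)}\left(\frac{\|\hat{u}_\varepsilon\|^2-\lambda\|\hat{u}_\varepsilon\|_{L^2_K(\mathbb{R}^N_+)}^2}{\|\hat{u}_\varepsilon\|_{L^{2_*}_K(\mathbb{R}^{N-1})}^2}\right)^{N-1}.
\end{equation*}
Inserting the asymptotics of Lemmas~\ref{lem4.1}--\ref{lem4.3} (and using the mean-value expansion $\|\hat{u}_\varepsilon\|_{L^{2_*}_K}^2=(\|\hat{u}_\varepsilon\|_{L^{2_*}_K}^{2_*})^{2/2_*}$), together with the identity $A_N/B_N=S_0$, the ratio inside the parentheses equals $S_0+O(\varepsilon^2)$ for $N\geq 5$, $S_0+O(\varepsilon^2|\ln\varepsilon|)$ for $N=4$, and $S_0+O(\varepsilon)$ for $N=3$. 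Raising to the $(N-1)$-st power therefore gives
\begin{equation*}
\sup_{t>0}\{\cdots\}\leq\frac{1}{2(N-1)}S_0^{N-1}+\mathcal{E}_\varepsilon,
\end{equation*}
with $\mathcal{E}_\varepsilon$ of the same order as above.

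Finally, by Lemma~\ref{lem4.7} (and its $\hat{v}_\varepsilon$-analogue in dimension three) we have $\|\hat{u}_\varepsilon\|_{L^q_K(\mathbb{R}^{N-1})}^q\geq c\,\varepsilon^{\theta_N}$ with $\theta_N=N-1-(N-2)q/2\in(0,1]$ for $q\in[2,2_*)$, plus an extra $|\ln\varepsilon|$ factor in the borderline case $N=3$, $q=2$. Since $\theta_N<2$ in every case, the negative contribution $-\frac{\mu a_1^q}{q}\|\hat{u}_\varepsilon\|_{L^q_K}^q$ strictly dominates $\mathcal{E}_\varepsilon$ for small $\varepsilon$, via a three-case analysis mirroring Lemma~\ref{lem3.7}: (i) $2<q<2_*$; (ii) $q=2$ with $N\geq 4$; and (iii) $q=2$ with $N=3$, where the logarithmic gain saves the otherwise matching rate. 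In each case $\hat{g}_\varepsilon(\tilde{t}_\varepsilon)<\frac{1}{2(N-1)}S_0^{N-1}$ for all sufficiently small $\varepsilon>0$, which proves \eqref{4.0}.

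The main obstacle is the case $N=3$: there the $\mu=0$ quotient contributes an error of order $\varepsilon$ (rather than $\varepsilon^2$), so the argument hinges on either $\theta_3=2-q/2<1$ when $q>2$, or the logarithmic boost when $q=2$. Deriving the $\hat{v}_\varepsilon$-version of Lemma~\ref{lem4.7}, namely $\|\hat{v}_\varepsilon\|_{L^2_K(\mathbb{R}^2)}^2\gtrsim\varepsilon|\ln\varepsilon|$, is the principal technical step; it proceeds by the same change of variables as in Lemma~\ref{lem4.7}, with the Gaussian factor $e^{-|x'|^2/(4\sqrt{5})}$ replacing the compactly supported $\phi^q$, the rapid decay of $\psi$ being harmless to the leading rate.
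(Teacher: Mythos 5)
Your proposal is correct and follows essentially the same strategy as the paper: bound the maximizer $\tilde t_\varepsilon$ from below by an $\varepsilon$-independent constant, peel off the $\mu$-term, evaluate the remaining two-term supremum explicitly as $\frac{1}{2(N-1)}\bigl(\frac{\|\cdot\|^2-\lambda\|\cdot\|^2_{L^2_K}}{\|\cdot\|^2_{L^{2_*}_K}}\bigr)^{N-1}=\frac{1}{2(N-1)}(S_0+B_\varepsilon)^{N-1}$, and then beat the error $B_\varepsilon$ by the lower bound on $\|\cdot\|^q_{L^q_K(\mathbb{R}^{N-1})}$ in the same three cases ($2<q<2_*$; $q=2$, $N\ge 4$; $q=2$, $N=3$ with the logarithmic gain). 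The only place you deviate is the case $N=3$: you switch to the Gaussian-weighted function $\hat v_\varepsilon$ and therefore must supply a $\hat v_\varepsilon$-analogue of Lemma \ref{lem4.7}, whereas the paper simply keeps $\hat u_\varepsilon$ for all $N\ge 3$, since Lemmas \ref{lem4.1}--\ref{lem4.3} and \ref{lem4.7} already cover $N=3$ and give $B_\varepsilon=O(\varepsilon)$, which is still dominated by the $\mu$-term because $\theta_3\in(0,1)$ when $q>2$ and because of the extra $|\ln\varepsilon|$ factor when $q=2$. Your sketch of the $\hat v_\varepsilon$-analogue is plausible (the Gaussian is bounded below on the unit ball, so the same change of variables gives the same rates), so the argument closes; it is just an unnecessary extra step --- the function $\hat v_\varepsilon$ is only needed in Lemma \ref{lem4.6}, where for $\mu=0$ the \emph{sign} of the $O(\varepsilon)$ coefficient matters, not here where the $\mu$-term does the work.
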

\begin{proof}
Define the function
\begin{equation*}
{\hat{g}}_{{\varepsilon}}(s):
={\frac{s^2}{2}}\Big({\|\hat{u}_\varepsilon\|}^{2}-\lambda \|\hat{u}_\varepsilon\|_{L^{{2}}_{K}({\mathbb{R}}^{N}_{+})}^{2}\Big)
-{\frac{s^{2_{*}} }{2_{*}}}\|\hat{u}_\varepsilon\|_{L^{{2}_{*}}_{K}({\mathbb{R}}^{N-1})}^{2_{*}}
-{\frac{\mu s^q}{q}}\|\hat{u}_\varepsilon\|_{L^{q}_{K}({\mathbb{R}}^{N-1})}^{q},
\end{equation*}
where $s>0$. This means that we only need to verify
\begin{equation}\label{bc3}
\sup\limits_{s>0}\hat{g}_{{\varepsilon}}(s)<\frac{1}{2(N-1)}S_0^{N-1}.
\end{equation}
Let $s_{\varepsilon}>0$ be a constant such that ${\hat g}_{{\varepsilon}}(s)$ attains its maximum value. One has
\begin{equation*}
{\|\hat{u}_\varepsilon\|}^{2}-\lambda \|\hat{u}_\varepsilon\|_{L^{{2}}_{K}({\mathbb{R}}^{N}_{+})}^{2}-
\|\hat{u}_\varepsilon\|_{L^{{2}_{*}}_{K}({\mathbb{R}}^{N-1})}^{2_{*}}
s^{2_{*}-2}-
{\mu}\|\hat{u}_\varepsilon\|_{L^{q}_{K}({\mathbb{R}}^{N-1})}^{q}s^{q-2}=0.\notag
\end{equation*}
For any fixed $\lambda\geq0$, it follows from Lemmas \ref{lem4.1}-\ref{lem4.3} and Lemma \ref{lem4.7} that there exists $a_{2}>0$, independent of ${\varepsilon}$, such that ${s_{\varepsilon}}\geq a_{2}$ for any ${\varepsilon}>0$ small enough. Then
  \begin{align}
\hat{g}_{{\varepsilon}}(s_{\varepsilon})
&\leq\sup\limits_{s>0}\Bigg( \notag {\frac{s^2}{2}}\Big({\|\hat{u}_\varepsilon\|}^{2}-\lambda \|\hat{u}_\varepsilon\|_{L^{{2}}_{K}({\mathbb{R}}^{N}_{+})}^{2}\Big)
-{\frac{s^{2_{*}} }{2_{*}}}\|\hat{u}_\varepsilon\|_{L^{{2}_{*}}_{K}({\mathbb{R}}^{N-1})}^{2_{*}}\Bigg)
-{\frac{\mu{s^q_\varepsilon}}{q}}\|\hat{u}_\varepsilon\|_{L^{q}_{K}({\mathbb{R}}^{N-1})}
^{q}\\
&\leq\sup\limits_{s>0}\Bigg( \label{bc4} {\frac{s^2}{2}}\Big({\|\hat{u}_\varepsilon\|}^{2}-\lambda \|\hat{u}_\varepsilon\|_{L^{{2}}_{K}({\mathbb{R}}^{N}_{+})}^{2}\Big)
-{\frac{s^{2_*} }{2_{*}}}\|\hat{u}_\varepsilon\|_{L^{{2}_{*}}_{K}({\mathbb{R}}^{N-1})}^{2_{*}}\Bigg)
-{\frac{\mu{a^{q}_2}}{q}}\|\hat{u}_\varepsilon\|_{L^{q}_{K}({\mathbb{R}}^{N-1})}^{q}\\
&=\frac{1}{2(N-1)}
\Bigg(\frac{{\|\hat{u}_\varepsilon\|}^{2}-\lambda \|\hat u_\varepsilon \|_{L^{{2}}_{K}({\mathbb{R}}^{N}_{+})}^{2}}
{\|\hat u_\varepsilon \|_{L^{2_*}_{K}({\mathbb{R}}^{N-1})}^{2}}\Bigg)^{N-1}
-{\frac{\mu{a^{q}_2}}{q}}\|\hat u _\varepsilon\|_{L^{q}_{K}({\mathbb{R}}^{N-1})}^{q}.\notag
  \end{align}
Similar to the proof of Lemma \ref{lem4.6}, we conclude
\begin{equation}\label{bc5}
\frac{{\|\hat u_\varepsilon \|}^{2}-\lambda \|\hat u_\varepsilon \|_{L^{{2}}_{K}({\mathbb{R}}^{N}_{+})}^{2}}
{\|\hat u_\varepsilon \|_{L^{2_*}_{K}({\mathbb{R}}^{N-1})}^{2}}
=
S_0+B_\varepsilon,
\end{equation}
where $B_\varepsilon$ is given by \eqref{00}.
Now, we are ready to verify \eqref{bc3}.
For $N\geq3$ and $2<q<2_*$, in view of Lemma \ref{lem4.7}, \eqref{bc4}, \eqref{bc5}, we have that for $\varepsilon$ sufficiently small,
\begin{equation}\label{ss1}	
\hat{g}_{{\varepsilon}}({s_{\varepsilon}})\leq
\frac{1}{2(N-1)}S_0^{N-1}+B_\varepsilon-\frac{\mu a_{2}^q l_{1}}{q}{{\varepsilon}}^{\theta _{N}}+o({{\varepsilon}}^{\theta _{N}})<\frac{1}{2(N-1)}S_0^{N-1},
\end{equation}
since $\theta_{N}\in (0,1)$ and $\mu>0$. For $q=2$, from Lemma \ref{lem4.7}, \eqref{bc4}, \eqref{bc5}, we proceed as follows:
\vskip 1mm
(1) If $N\geq4$, then for any $\mu>0$, $\varepsilon$ sufficiently small,
\begin{equation}\label{ss2}
 \hat{g}_{{\varepsilon}}({s_{\varepsilon}})\leq
\frac{1}{2(N-1)}S_0^{N-1}-\frac{\mu a_{2}^q l_{1}}{q}{{\varepsilon}}+o({{\varepsilon}})<\frac{1}{2(N-1)}S_0^{N-1}.
\end{equation}

(2) If $N=3$, one has
\begin{equation}\label{ss3}
 \hat{g}_{{\varepsilon}}({s_{\varepsilon}})\leq
\frac{1}{4}S_0^{2}-\frac{\mu a_{2}^q l_{2}}{q}{{\varepsilon}}|\ln \varepsilon| +O({{\varepsilon}})<\frac{1}{4}S_0^{2},
\end{equation}
for small $\varepsilon$ and $\mu>0$.

Therefore, we conclude that \eqref{bc3} holds for $\varepsilon>0$ sufficiently small from \eqref{ss1}-\eqref{ss3}.
\end{proof}

Now we are ready to prove our Theorem \ref{Th1.2}.

{\bf Proof of Theorem \ref{Th1.2}.}
From Lemma \ref{lem2.5}, Lemma \ref{lem2.7}, Lemma \ref{lem4.6}, Lemma \ref{lem4.8} and Mountain Pass Lemma, we obtain that \eqref{1.5} has a
nonnegative weak solution $u$ under the assumptions of Theorem \ref{Th1.2}. Moreover, from the Brezis-Kato Theorem and maximum principle, $u$ is
a positive solution of \eqref{1.5}. That is, $u$ is a positive solution of \eqref{1.1}. 
 \qed

\vs{2mm}
\noindent\textbf {Acknowledgments}
The work is supported by NSFC [grant numbers 12271196, 11931012].

\vskip0.2cm
\noindent{\bf Statements and Declarations}

\noindent{\bf Competing Interests}
On behalf of all authors, the corresponding author states that there is no conflict of interest.

\noindent{\bf Data availability}
No data was used for the research described in the article.

\end{document}